\newtheorem{theorem}{Theorem}
\newtheorem{proposition}[theorem]{Proposition}
\newtheorem{lemma}[theorem]{Lemma}
\theoremstyle{definition}
\newtheorem{definition}[theorem]{Definition}
\theoremstyle{remark}
\newtheorem{remark}[theorem]{Remark}
\newtheorem{claim}[theorem]{Claim}
\newtheorem*{remark*}{Remark}
\newtheorem*{note*}{Note}
\newtheorem{example}[theorem]{Example}
\newcommand{\rank}[1]{\mathrm{rank}\left(#1\right)}
\newcommand{\conv}[1]{\mathrm{Conv}(#1)}
\newcommand{\avg}[1]{\mathrm{Avg}_{#1}}
\newcommand{\argmin}{\mathrm{argmin}}
\newcommand{\Aact}{\mathbf C_{\mathrm{act}}}
\newcommand{\Ainact}{\mathbf C_{\mathrm{inact}}}
\newcommand{\bact}{\mathbf b_{\mathrm{act}}}
\newcommand{\binact}{\mathbf b_{\mathrm{inact}}}
\newcommand{\Vknown}{\mathcal V_{\mathrm{known}}}
\newcommand{\Vexplore}{\mathcal V_{\mathrm{pending}}}
\newcommand{\defeq}{\stackrel{\scriptsize\mathrm{def}}{=}}
\newcommand{\bigmatrix}{\ensuremath{\left[\begin{matrix}
\begin{matrix}
-1 & 0 & \dots & 0\\
\vdots & \vdots && \vdots \\
-1 & 0 & \dots & 0\\
\end{matrix}       & \hspace{3em} & \Huge{A_1}& \hspace{3em}\\
\begin{matrix}
0 & -1 & \dots & 0\\
\vdots & \vdots && \vdots \\
0 & -1 & \dots & 0\\
\end{matrix}       & \hspace{3em}& \Huge{A_2}& \hspace{3em}\\
\ddots                              & &\vdots& \\
\begin{matrix}
0 & 0 & \dots & -1\\
\vdots & \vdots && \vdots \\
0 & 0 & \dots & -1\\
\end{matrix}       & \hspace{3em}& \Huge{A_s}& \hspace{3em}\\
\end{matrix}
\right]
}}
\newcommand{\rev}[1]{{#1}}
\newcommand{\reva}[1]{{#1}}
\newcommand{\revb}[1]{{#1}}
\newcommand{\revc}[1]{{#1}}
\newcommand{\revd}[1]{{#1}}
\newcommand{\vol}[1]{\reva{\mathrm{Vol}(}{#1}\reva{)}}
\newcommand{\mm}[2]{\reva{m_{#1}^{(#2)}}}
\newenvironment{theglossary}{\begin{tabular}{lll}}{\end{tabular}}
\newcommand{\glossaryentry}[2]{#1 &\ifthenelse{\equal{#2}{}}{\\}{p.{#2}\\}}
\renewcommand{\MR}[1]{}
\author{Gregorio Malajovich}
\thanks{A substantial part of this paper was written while visiting the Simons Institute for the Theory of Computing in the
University of California at Berkeley. This visit was funded by CAPES (Coordenação de 
Aperfeiçoamento de Pessoal de Nível Superior, Brazil. Proc. BEX 2388/14-6). This research is  
also funded by CNPq, grants 441678/2014-9 and 306673/2013-4. Numerical experiments were performed at 
NACAD (Núcleo Avançado de Computação de Alto Desempenho) at UFRJ}
\address{Departamento de Matemática Aplicada, Instituto de Matemática, Universidade Federal do
Rio de Janeiro. Caixa Postal 68530, Rio de Janeiro RJ 21941-909, Brasil.}\email{gregorio.malajovich@gmail.com}
\title[Computing mixed volume...]{Computing mixed volume and all mixed cells in quermassintegral time}
\date{April 14, 2016}
\subjclass[2010]{Primary 65H10, Secondary 52A39, 14M25, 14N10, 52B55 }
\keywords{mixed volume, sparse polynomials, homotopy algorithms, tropical algebraic geometry}
\begin{document}
\begin{abstract}
The mixed volume counts the roots of generic sparse polynomial systems. Mixed cells are used to
provide starting systems for homotopy algorithms that can find all those roots, and track no
unnecessary path.
Up to now, algorithms for that task were of enumerative type, with no general non-exponential complexity bound.
A geometric algorithm is introduced in this paper. Its complexity is bounded in the average and probability-one
settings in terms of some geometric invariants: quermassintegrals associated to the tuple of convex hulls of 
the support of each polynomial.
Besides the complexity bounds, numerical results are reported. Those are consistent with an output-sensitive running time for
each benchmark family where data is available. For some of those families,
an asymptotic running time gain over the 
best code available at this time was noticed.
\end{abstract}
\maketitle

\tableofcontents
\section{Introduction}

The {\em mixed volume} of \revb{an} $n$-tuple of convex bodies $(\mathcal A_1, \dots, \mathcal A_n)$,
$\mathcal A_i \subset \mathbb R^n$ is defined by
\[
\glossary{$V(\mathcal A_1, \dots, \mathcal A_n)$&Mixed volume of $\mathcal A_1, \dots, \mathcal A_n$.}
V({\mathcal A_1}, \dots, {\mathcal A_n})
\defeq
\frac{1}{n!}
\frac{\partial^n}
{\partial t_1 \partial t_2 \cdots \partial t_n}
\vol{t_1 \mathcal A_1 + \cdots + t_n \mathcal A_n} 
\]
where $t_1, \dots, t_n \ge 0$ and the derivative is computed at $t=0$. It generalizes 
ordinary volume:
\[
V(\mathcal A, \mathcal A, \dots, \mathcal A) = \vol{A}.
\] 
\par
Mixed volume was introduced by \ocite{Minkowski} in connection with 
the {\em quermassintegrals} $V(\mathcal A, \mathcal A, B^3)$ and $V(\mathcal A,
B^3, B^3)$\rev{, where $B^3$ stands for the \reva{unit} 3-ball. Those quermassintegrals} are equal (up to a factor) \reva{to} the area and the total mean curvature of $\partial \mathcal A$.
\medskip
\par
In this paper, $A_1, \dots, A_n$ are finite subsets of $\mathbb Z^n$.
We will provide an algorithm to compute 
the scaled mixed volume 
\[
\glossary{$A_i$&Finite subset of $\mathbb Z^n$.}
V= n!\ V( \conv{A_1}, \cdots, \conv{A_n})
\glossary{$V$ & Scaled mixed volume $n!V( \conv{A_1}, \cdots, \conv{A_n})$.}
\]
together with a set of lower mixed facets for a random lifting (in modern language, a 
zero-dimensional tropical variety). The BKK bound \cites{Bernstein, BKK}
states that $V$ is the number
of roots in $(\mathbb C^{\times})^n$ of a generic system of Laurent polynomials 
$f_1(\mathbf x)=\cdots = f_n(\mathbf x) = 0$ where
\begin{equation}\label{sparse-system}
f_{i}(\mathbf x) = \sum_{\mathbf a \in A_i} f_{ia}x_1^{a_1}x_2^{a_2}\cdots x_n^{a_n} 
\hspace{1em}, 1 \le i \le n
\end{equation}
\revc{where the $f_{ia}$ are complex numbers.}
\ocite{Huber-Sturmfels} suggested to use 
the lower mixed facets (Def.\ref{def:mixed}) 
to produce a starting system for homotopy
algorithms to solve sparse polynomial equations $f_1(\mathbf x)=\cdots = f_n(\mathbf x)=0$ with $f_i$ as above.
\ocite{Emiris-Canny} introduced the first incremental algorithms for computing mixed volume 
and mixed cells.
For a certain time, computing the starting system was a bottleneck for 
\rev{polyhedral} 
homotopy based polynomial solving software \rev{\cite{Lee-Li}*{p.98}}. 
Later breakthroughs by
~\ocites{Gao-Li, Li-Li, Gao-Li-Mengnien, Mizutani-Takeda-Kojima, Lee-Li,
Chen-Lee-Li}
provided efficient practical implementations through enumerative 
algorithms (Remark~\ref{predecessors}).
The complexity properties of those   
algorithms are not well understood. 
\par
The algorithm {\sc AllMixedCells} in page \pageref{allmixedcells}
is geometric in nature. This will allow for a complexity
bound in terms of geometric invariants (quermassintegrals).
\medskip
\par
Before writing an algorithm or stating complexity results, 
one should fix a model of computation. In this paper, an {\em algorithm} is
a randomized \reva{{\em real Random Access Machine} (real RAM) \cite{Preparata-Shamos}}. 
Arithmetic operations
$+$, $-$, $\times$, $/$,$\sqrt{\ },$\reva{$\log()$, $\sin()$, $\cos()$} 
are allowed and cost one unit of time.
Memory access is also assumed to be performed at unit cost. 
\reva{In addition, a {\em randomized} real RAM has 
access to an unlimited supply of independently uniformly distributed
random numbers in $[0,1]$.}
\rev{The running time of a machine with 
a fixed input is therefore a random variable. Henceforth, the expressions 
{\em with probability one} and
\reva{{\em on average}} refer to the product measure of $[0,1]^{\infty}$}.

\medskip
\par

Let $\mathcal A_i = \conv{A_i}$, $\mathcal A = \mathcal A_1 + \cdots 
+ \mathcal A_n$ and
let $B^n$ be the unit $n$-ball of radius 1.
Let $d_i=\dim \conv{A_i}$. Let $V_i = d_i !\vol{\conv{A_i}}_{d_i}$ be the
generic root bound of an unmixed polynomial system of support $A_i$.
Let $0 \le E_i < \# A_i$ be \reva{the} numbers to be \reva{formally} defined in section~\ref{sec:algorithm} \reva{(but see Remark~\ref{rem:Ei} below)}.
\begin{theorem}\label{main}
\glossary{$V_i$ & Generic root bound of an unmixed system of support $A_i$.}
With probability one, 
the algorithm {\sc AllMixedCellsFull} \revb{stated on page \pageref{allmixedcellsfull}}
produces all the lower mixed facets in time bounded by $O(T + T')$ arithmetic operations, where

\begin{equation}\label{main-time1}
T = (\sum_{i=2}^{\reva{n}} v_i) \left( n^2 \sum_{i=1}^n E_i + \log \sum_{i=2}^n v_i \right),
\glossary{$T$, $T'$&Time bounds for the algorithm}
\end{equation}

\begin{equation}\label{main-time2}
T' = (\max V_i) \left( n^2 \sum_{i=1}^n \# A_i + \log \max_{i=1, \dots, n} V_i \right),
\end{equation}
and \reva{$v_i$ is a random variable satisfying the two bounds below:}
\begin{enumerate}[(a)]
\item With probability one,
\[
v_{\reva{i}} \le
n! V(\mathcal A_1, \dots, \mathcal A_{i-1}, \mathcal A, B^n, \dots, B^n).
\]
\item \reva{Let $\bar v_i$ be the average of $v_i$, then}
\[
\bar v_i 
\le \frac{n!}{2^{n-i}} V(\mathcal A_1, \dots, \mathcal A_{i-1}, \mathcal A, B^n, \dots, B^n) 
.
\]
\end{enumerate}
\end{theorem}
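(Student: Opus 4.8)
The plan is to separate the argument into a combinatorial part, which bounds the running time by $O(T+T')$ (and establishes that \emph{all} lower mixed facets are produced), and a geometric part, which bounds the intermediate counts $v_d$ (parts (a) and (b)).

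\textbf{Running time.} I would first unwind the bookkeeping of \textsc{AllMixedCellsFull}. For each level $d=2,\dots,n$ it maintains the two balanced search trees $\Vknown$ and $\Vexplore$ of level-$d$ partial cells discovered so far, keyed by a canonical integer identifier, repeatedly popping a pending partial cell, computing the level-$d$ partial cells adjacent to it across a lower ridge, and inserting the new ones. The accounting facts to check are: (i) each partial cell is inserted into and extracted from the trees $O(1)$ times, so the number of tree operations is $O(\sum_i v_i)$ and each costs $O(\log\sum_i v_i)$; (ii) producing the neighbours of one partial cell at level $i$ amounts to setting up and solving the small linear systems attached to the sharp edges incident to the relevant vertex, of which there are at most $E_i$, at cost $O(n^2)$ apiece, hence $O(n^2\sum_i E_i)$ in total per partial cell. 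Combining (i) and (ii) and summing over $d$ gives $T$. The concluding phase performs a bounded amount of additional work, governed by the unmixed root bounds $V_i$ and the full supports $A_i$ rather than by the skeleta, which is accounted for by $T'$; the verification there is routine once the structure of that phase is made explicit. Finally, I would record that, with probability one, the random lifting is in general position (Def.~\ref{def:generic}), so that all these linear systems are nonsingular and the level-$d$ adjacency graph is connected, which is exactly what makes the traversal exhaustive.

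\textbf{The count $v_d$, part (a).} The crux is to identify a level-$d$ partial cell with a geometric object: it consists of an edge $e_i$ of $\mathcal A_i=\conv{A_i}$ for each $i<d$, together with an edge $G$ of $\mathcal A_d+\dots+\mathcal A_n$, such that $\widetilde e_1+\dots+\widetilde e_{d-1}+\widetilde G$ is a \emph{lower} face of the lifted Minkowski sum and $e_1+\dots+e_{d-1}+G$ has the maximal possible dimension $d$. Equivalently, the common normal cone $N$ of this face meets the image $Q\cdot(\mathbb R^d\times\{0\})$ of the generic $d$-flat cut out by the random rotation $Q$; since $Q$ is generic (probability one), $N$ has dimension exactly $n-d$ and meets the flat transversally in a single ray, so $v_d$ counts these faces without multiplicity. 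Dropping the "lower" condition, and then replacing the fixed flat by letting the $n-d$ normal directions complementary to $e_1,\dots,e_{d-1},G$ range over full spheres, only increases the count; by the mixed-volume interpretation of $n-d$ Minkowski summands equal to $B^n$ — each ball summand projects out one dimension, and $n!\,V(P_1,\dots,P_d,B^n,\dots,B^n)$ with $n-d$ balls is, by Kubota's formula, the average over an $(n-d)$-frame of the scaled $d$-dimensional mixed volume of the projections — the total number of such faces of $\mathcal A_1+\dots+\mathcal A_{d-1}+(\mathcal A_d+\dots+\mathcal A_n)$ equals $n!\,V(\mathcal A_1,\dots,\mathcal A_{d-1},\mathcal A_d+\dots+\mathcal A_n,B^n,\dots,B^n)$. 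Monotonicity of mixed volume under the inclusion $\mathcal A_d+\dots+\mathcal A_n\subseteq\mathcal A$ (up to translation) then gives the bound in (a).

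\textbf{Part (b) and the main obstacle.} For (b) I would average the estimate over $Q\in O(n)$ with Haar measure, i.e. compute $\bar v_d=\avg{Q\in O(n)}v_d$. The only role of $Q$ is to decide, for each of the $n-d$ normal directions complementary to the partial cell, which of the two half-circles is the "lower" one with respect to the lifting; when averaged, each such hemisphere carries exactly half the measure of the corresponding sphere, and after the rotation these $n-d$ choices are governed by independent coordinates, so together they contribute a factor $2^{-(n-d)}$. This yields $\bar v_d\le 2^{-(n-d)}\,n!\,V(\mathcal A_1,\dots,\mathcal A_{d-1},\mathcal A,B^n,\dots,B^n)$. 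The step I expect to be the real obstacle is making the correspondence in part (a) exact: one must show that the partial cells the algorithm actually enqueues at level $d$ are in honest bijection — no double counting, no omission — with the geometric faces described, which uses both the connectedness of the level-$d$ adjacency graph and a correctly normalized version of the integral-geometric identity that converts "the normal cone meets a generic $(n-d)$-flat, on its lower side" into the mixed quermassintegral $V(\dots,B^n,\dots)$. Pinning down the constants and the genericity hypotheses of that identity, and checking they are met with probability one by the random lifting together with $Q$, is where the bulk of the work will lie.
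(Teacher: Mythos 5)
Your running-time accounting matches the paper's (graph exploration with balanced trees keyed by a hash of the active-constraint list, $O(n^2\sum_i E_i)$ per neighbourhood computation, $O(\log\sum v_i)$ per tree operation, and $T'$ covering the single-support calls that compute each lower hull $\hat{\mathcal A}_i$ — a \emph{preliminary} phase, not a concluding one, but that is cosmetic). However, your justification of exhaustiveness contains a genuine error: you assert that general position of the lifting makes ``the level-$d$ adjacency graph connected.'' It does not — Example~\ref{counterexample} exhibits a disconnected $X_3'$ on a positive-measure set of liftings. Correctness instead rests on Theorem~\ref{th:connected}(c): every connected component of $X_d'$ is an unbounded tropical curve that must cross the hyperplane $F_{d-1}$ (proved via the balancing condition of Lemma~\ref{lem:balancing} and an intermediate-value argument on the objective $\mathbf Q_d\boldsymbol\xi$), so every component is reached through the inter-level connecting edges of $\Gamma$ starting from $X_0=\{F_0\}$. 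Without this, the traversal could miss whole components and the theorem's ``produces all the lower mixed facets'' would be unproved.

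For parts (a) and (b) you take a genuinely different route (normal cones meeting a random flat, plus Kubota's formula), but the step you yourself flag as the obstacle is exactly where the argument breaks. Kubota-type identities express mixed volumes with ball factors as \emph{averages of volumes} of projections; they do not count faces. To bound the \emph{cardinality} $v_d$ by $n!$ times a mixed volume you need each counted object to contribute at least $1/n!$ to that volume, and this forces the auxiliary body to be a lattice polytope. The paper's Proposition~\ref{prop:deterministic} does precisely this: it adjoins the orthoplex $A_0=\{0,\pm\mathrm e_1,\dots,\pm\mathrm e_n\}$ with a large lifting value $S$, and by induction on $e=d,\dots,n$ injects each vertex of $X_d'$ into a lattice mixed cell of the enlarged system (adding one constraint $(0,\pm\mathrm e_i)$ each time the curve $\xi(R)$ exits the cube of side $S$); each such cell has volume $\ge 1/n!$, giving the count, and $\beta_n\subset B^n$ plus monotonicity gives the stated bound. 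Your hemisphere-independence heuristic for the factor $2^{-(n-d)}$ in (b) is also not a proof: the paper's argument (Proposition~\ref{prop:average}) is that the $2^{n-d}$ sign-changed flags $\Sigma Q$ yield mutually exclusive events, because two nonempty intersections would place a full line inside the cone $\Xi(W)$ and hence force $V(\mathcal A_1,\dots)=0$ by Lemma~\ref{flat} — note this is where the hypothesis $V\ne 0$ enters, which your sketch omits. These are not routine details to ``pin down''; they are the substance of the proof.
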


\begin{remark}
If the polytopes $A_i$ are 
represented by dense $\#A_i \times n$ matrices, then $n \sum E_i$
is a lower bound for the input size $S$. So the complexity can be
bounded above by
\begin{equation}\label{main-time}
O \left(W ( n S + \log W) \right),
\hspace{2em}
W=\max\left( \max_i( V_i) \ ,\  \sum_i v_i \right)
.
\end{equation}
\end{remark}

\begin{remark}
Because of monotonicity of the mixed volume,
$v_{\reva{i}} \le n!V(\mathcal A, \dots, \mathcal A,$ $B^n, \dots, B^n)$. 
Also, $V_i \le n! \vol{\mathcal A}_n$. If
$\mathcal A$ contains a copy of the unit ball, \revb{then} $v_{\reva{i}} \le n!\vol{\mathcal A}$.
\end{remark}

\begin{remark}
In the probability-one bound (a) for $v_{\reva{i}}$, one can replace $B^n$ by
$\beta_n=\{x \in \mathbb R^n: \|x\|_1 \le 1\}$, the $n$-orthoplex (Sec. \ref{sec:deterministic}). \revc{This replacement gives an exponentially smaller bound
when $i$ is small.}
Assuming that $\vol{\mathcal A} \ne 0$, \revc{we obtain}
\[
v_{\reva i} \le n! 2^{n-\reva{i}} \vol{\mathcal A}
.
\]
It is not clear whether a similar bound holds for \reva{$\bar v_i$ in }the average case analysis (b).
\end{remark}

\begin{remark}
Assume that $\dim \conv{A_i} = n$. Let $\delta_i$ denote the radius of the inscribed sphere to $\mathcal A_i$ and
$\Delta$ the radius of the circumscribed sphere to $\mathcal A$.
Then,
\[
v_{\reva{i}} \le n! \frac{\Delta}{\delta_{\reva{i}} \delta_{\reva{i}+1} \cdots \delta_n} V( \mathcal A_1, \dots, \mathcal A_n)
\hspace{1em}
\text{ and }
\hspace{1em}
\bar v_{\reva i} \le \frac{n!}{2^{n-\reva{i}}} \frac{\Delta}{\delta_{\reva{i}} \delta_{\reva{i}+1} \cdots \delta_n} V( \mathcal A_1, \dots, \mathcal A_n)
.
\]
\end{remark}

\begin{remark}\label{rem:Ei}
\reva{The bound }$T'$ \reva{represents} the cost of computing a lower convex hull of a random lifting for
each of the polytopes $A_1,
\dots, A_n$.
Typically $T' \ll T$ but counterexamples may be produced. 
In a previous version of this paper, the algorithm
was assumed to receive those lower convex hulls as precomputed information.
$E_i$ is the degree of the 1-skeleton of the lower convex hull for the
lifting of $A_i$ (Sec.~\ref{sec:algorithm}).
\end{remark}

From a complexity standpoint, bounding the cost of mixed volume 
computation in terms of the mixed volume and similar invariants
is the best that we can aim for.
The general problem
of computing the mixed volume is known to be \#P-complete. This follows
from the famous result by \ocite{Khachiyan} that computing
{\em volumes} of convex polytopes is already \#P-hard. 
\ocite{Barvinok} suggested approximating mixed volumes by the
mixed volume of ellipsoids. \ocite{Gurvits} obtained an approximation
within a factor exponential in the number of variables, and showed
that the same ratio could \revd{not} be obtained with a deterministic 
algorithm in the Oracle setting. \ocite{Dyer-Gritzmann-Hufnagel}
provided good approximations in certain special cases, but
showed also that computing mixed volumes of zonotopes is already
\#P-hard.

\ocite{Emiris} was able to bound the complexity of the
algorithm by \ocite{Emiris-Canny} for enumerating mixed cells
in terms of the volume of the Minkowski sum of all polytopes. 
Assuming that all polytopes have non-zero $n$-dimensional volume,
he deduced bounds for the bit-complexity. \rev{Simultaneously,
\ocite{Verschelde-Gatermann-Cools} introduced dynamic lifting and also
obtained complexity bounds. In both papers the complexity bounds depend
on the number of lower facets, not necessarily mixed facets. More recently}
\ocite{Emiris-Vidunas} gave specific formulas for certain
semi-mixed volumes. \ocite{Emiris-Fisikopoulos} 
\rev{devised} an algorithm to compute the mixed
volume without \rev{actually computing} the mixed cells.
\bigskip
\par
\rev{The algorithm in this paper visits $v_{n}$ lower facets including
all the mixed cells. Those facets are `dual' to a certain tropical curve,
not necessarily connected. The precise definition of this tropical curve
requires the introduction of a mixed Legendre transform, which allows to
efficiently represent tropical varieties \revb{as} 
specific subsets of the viable set 
of some linear programming problem. The precise formalism in introduced in
Section~2.}
\par
\rev{In Section~3, it is proved that each connected component of this
tropical curve cuts a generic affine hyperplane with probability 1.
This allows to find all the connected components by a dimensional induction.}
\rev{Each component of the tropical curve is explored by a particular
pivoting procedure, that takes into account the structure of the problem.
Those procedures are explained in Section 4, together with the procedures
for pivoting from one induction level to the other. Because of numerical
stability reasons, the generic affine hyperplane is sent to infinity and 
the pivoting procedures use nonstandard real numbers (real polynomials \reva{in}
 a parameter $R \rightarrow \infty$).}
\par
\rev{The algorithm and intermediate complexity bounds are given in
Section 5. An important complexity gain is obtained by assigning
a {\em hash value} to every lower \revc{face}. This allows to efficiently
store sets of explored and unexplored lower \revc{faces} as a balanced tree.
}
\rev{
The proof of Theorem~\ref{main} is completed in Sections 
\ref{sec:deterministic} and \ref{sec:average}, where the numbers
$v_d$ and $\bar v_d$ get bounded in terms of mixed volumes
(quermassintegrals).
}
\par
An actual implementation of the algorithm is described in
sections~\ref{sec:implementation} to \ref{sec:conclusions}. 
This is part of a long-term project to produce a toric homotopy
based polynomial system solver. The source code is available 
at \url{http://sourceforge.net/projects/pss5/} and licensed 
under GNU Public License. At this time, only the mixed volume section of the
code is complete and fully tested. \rev{There is a glossary of notations
at the end of the paper}.

The rationale for including sections \ref{sec:implementation} to
~\ref{sec:conclusions} in this paper is to substantiate the following
claims:

\begin{claim} The model of computation is realistic, in the sense that
the complexity bound in Theorem~\ref{main} accurately describes the
running-time measurements for a publicly available implementation of the
algorithm. 
\end{claim}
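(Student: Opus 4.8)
The Claim is empirical rather than purely mathematical, so the plan is to substantiate it by a controlled experimental study, reported in Sections~\ref{sec:implementation}--\ref{sec:numerical}. First I would select a handful of benchmark families whose supports $A_1,\dots,A_n$ are parameterized by $n$ and possibly a degree parameter — cyclic roots, the economics and Katsura families, and random dense systems of fixed total degree — so that the quantities $\max_i V_i$, $\sum_i \#A_i$, $\sum_i E_i$ and, above all, the output size $\sum_i v_i$ can be tabulated as functions of the parameters. For families where $\sum_i v_i$ has no convenient closed form, I would simply record the value produced by {\sc AllMixedCellsFull} on a concrete run: the Claim concerns an \emph{output-sensitive} running time, and the output is observable at no asymptotic cost.

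Next I would instrument the implementation so that a single run reports both a count of elementary real-RAM operations and wall-clock time on a fixed machine, run it on each family over the widest range of parameter values that memory allows, and compare the measurements against the bound $O\bigl(W(nS+\log W)\bigr)$ of \eqref{main-time}. Concretely, for each family I would plot measured time against the predicted quantity $W(nS+\log W)$ on a log--log scale, check that the best-fit slope is $1$ up to lower-order corrections, and estimate the implied constant. Two reductions make this manageable: first, $T'$ — the cost of the $n$ lower-hull computations — is dominated by $T$ on all these families, so it is enough to test the fit against $T$ of \eqref{main-time1} after a separate check that $T'\ll T$ on the tested instances; second, ``realistic'' should mean that a \emph{single} constant, stable to within a small factor, predicts the timing across the different families, not merely that each family scales correctly in isolation, so I would report the per-family constants side by side.

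The main obstacle is that Theorem~\ref{main} is stated in the unit-cost real RAM model with memory access amortized through balanced trees, whereas a physical machine has a memory hierarchy: once the trees holding the known and pending lower faces exceed cache size, the effective cost per operation drifts upward by a slowly growing factor. I would therefore (i) confine the range of instances reported as confirming the Claim to those for which this drift stays below the noise of the timing measurements, and (ii) exhibit, for at least one family, the onset of the drift, to make explicit where the model ceases to be predictive. A secondary subtlety is that $v_d$ depends on the random lifting and on the random $Q\in O(n)$; I would report the empirical spread of $v_d$ over repeated runs to confirm that it is small enough for the single-run output size to serve as a faithful proxy for the average bound of Theorem~\ref{main}(b). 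Assembling the tables, the log--log fits with estimated slopes and constants, the $T'\ll T$ verification, and the variance report into Sections~\ref{sec:implementation}--\ref{sec:numerical} is what substantiates the Claim.
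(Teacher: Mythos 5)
Your plan is essentially the paper's own approach: the Claim is substantiated empirically in Sections~\ref{sec:implementation}--\ref{sec:numerical} by running the listed benchmark families (ten pseudo-random liftings each), tabulating the invariant $T$ of \eqref{main-time1} alongside average wall time and the ratio $RT/T$ (Table~\ref{table:running}), and exhibiting the log--log fit of measured time against $T$ (Fig.~\ref{plot:running}), with the conclusion that the running time is of the order of $10^{-9}T$ seconds with standard deviation under $15\%$. Your additional refinements (operation counting, explicit $T'\ll T$ check, cache-drift analysis) go slightly beyond what the paper reports but do not change the method.
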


\begin{figure}
\begin{tikzpicture}
\begin{loglogaxis}[
	xlabel={Value of $T$},
	ylabel={Measured time (s)},
        grid=major,
        legend style={cells={anchor=west}, rounded corners=2pt, at={(1.5,1.0)}}
        ]
\addplot +[only marks] coordinates{(7.51E+10,2.06E+02)(3.66E+11,8.50E+02)(1.96E+12,4.07E+03)};
\addplot +[only marks] coordinates{(8.92E+11,1.23E+03)(2.24E+12,2.87E+03)(5.62E+12,6.46E+03)};
\addplot +[only marks] coordinates{(3.45E+11,5.18E+02)(9.27E+11,1.27E+03)(2.50E+12,3.08E+03)(6.67E+12,7.58E+03)};                                 
\addplot +[only marks] coordinates{(2.73E+11,5.57E+02)(9.72E+11,1.88E+03)(3.05E+12,5.31E+03)(8.81E+12,1.42E+04)};                                 
\addplot +[only marks] coordinates{(2.01E+10,7.00E+01)(3.73E+11,1.02E+03)};                 
\addplot +[only marks] coordinates{(1.83E+09,7.62E+00)(4.39E+12,7.25E+03)};
\addplot +[only marks] coordinates{(4.30E+09,1.42E+01)};
\addplot +[only marks] coordinates{(1.93E+09,9.26E+00)(4.62E+12,1.14E+04)};                    
\addplot +[only marks] coordinates{(2.21E+12,2.49E+03)(5.70E+12,5.89E+03)(1.21E+13,1.18E+04)}; 
\addplot +[only marks] coordinates{(7.29E+08,6.50E+00)(2.08E+10,1.11E+02)(7.80E+11,3.85E+03)};
\addplot +[only marks] coordinates{(9.61E+11,1.93E+03)(2.48E+12,4.62E+03)};
\addplot +[only marks] coordinates{(2.10E+08,1.54E+00)(4.62E+08,2.18E+00)(1.02E+09,3.35E+00)};
\addplot +[only marks] coordinates{(1.50E+08,1.55E+00)(2.64E+11,4.84E+02)};
\legend{Cyclic 13--15, Noon18--20, Chandra18--21, Katsura 15--18, Gaukwa 7--8, Vortex 5--6, N-body 5, Gridanti 3--4, Sonic 8--10, Graphmodel 6--8, Eco 20--21, Reimer 13--15, VortexAC 4--5}
\end{loglogaxis}
\end{tikzpicture}
\caption{Measured running time (using 8 cores) against
the invariant $T$ from equation \eqref{main-time1} for several benchmark examples.
Data from table~\ref{table:running} page~\pageref{table:running}.
\label{plot:running}}
\end{figure}
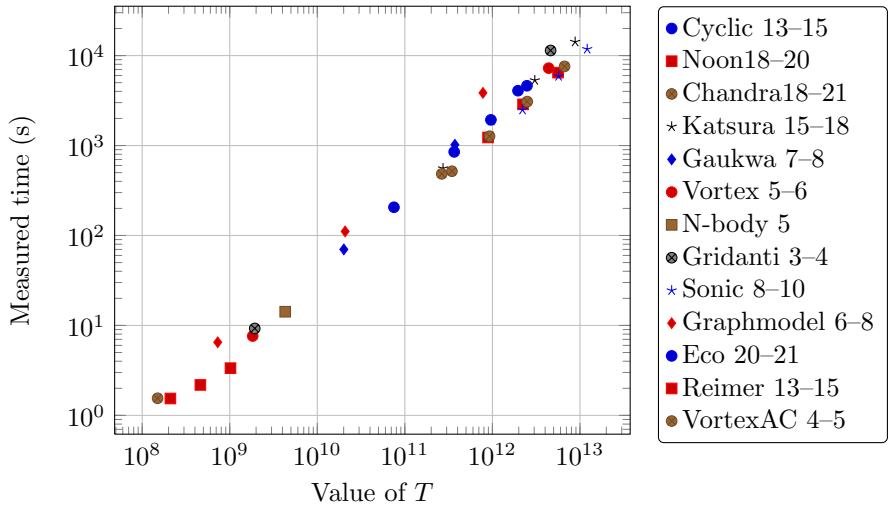

Experiments were performed on a large number of examples,
including some very large benchmark systems (Fig.~\ref{plot:running}). 
It is worth to mention that numerical stability issues did arise.
Those were circumvented by a careful error analysis and a recovery step
in the linear algebra routines (Sec.~\ref{sec:implementation}).
The experiments in Fig~\ref{plot:running} show no noticeable 
running time increase due to the eventual recovery step.

\begin{claim} The algorithm is scalable. 
\end{claim}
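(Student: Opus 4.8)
The plan is to substantiate scalability on two fronts: a theoretical bound on the communication complexity of a parallel version of {\sc AllMixedCellsFull}, and strong- and weak-scaling experiments on the benchmark systems of Figure~\ref{plot:running}. On the theoretical side, I would first isolate the unit of parallel work in the geometric exploration: each step pivots from one discovered lower mixed facet to a neighbour, solving the associated small linear program by a rank-one update, which touches only $O(n^2\sum_i E_i)$ words and is independent of work done elsewhere. The global state consists of the queue $\Vexplore$ of pending vertices and the balanced search tree $\Vknown$ of facets found so far; a worker handed a pending vertex expands its entire local neighbourhood with no communication and need only report new facets into $\Vknown$ and draw or return pending vertices to $\Vexplore$. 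Charging one message to each reported facet and one to each dequeued pending vertex, and bounding both counts by the output size $\sum v_i$, I would prove that the parallel algorithm exchanges $O(\sum v_i)$ messages, each of size $O(n)$ since a facet normal is $O(n)$ integers; the resulting communication volume $O(n\sum v_i)$ is of strictly lower order than the arithmetic work $T$ of Theorem~\ref{main} in every regime of interest.

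The only inherently sequential cost left is the $n$ lower-hull computations that account for $T'$, which I would either replicate on each node or parallelize separately; Amdahl's law then predicts near-linear speedup up to roughly $p \approx (\sum v_i)/(n^2\sum_i E_i)$ processors. I would then report strong scaling (a fixed instance on $p=1,2,4,\dots$ cores) and weak scaling (instance size growing with $p$ along the Cyclic-$n$, Noon-$n$ and Katsura-$n$ families), plotting measured speedup against $p$ together with the ideal line and checking that the knee at which the curve departs from ideal matches the predicted value of $p$.

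The main obstacle I expect is load balancing rather than raw bandwidth: the local neighbourhoods explored from different pending vertices can have wildly different sizes, so any static partition of $\Vexplore$ would idle many cores. I would resolve this by dynamic work stealing from $\Vexplore$, observing that each steal costs a single tree-style operation, i.e.\ $O(\log \sum v_i)$, and hence does not change the asymptotics. A secondary obstacle is contention on the shared tree $\Vknown$, which I would mitigate by sharding it under a hash of the facet normal (or, alternatively, by tolerating duplicate discovery and deduplicating at the end) and then confirm experimentally that neither stealing overhead nor $\Vknown$ contention becomes the bottleneck at the core counts tested. Finally, I would note that the stability-recovery step of Section~\ref{sec:implementation} is embarrassingly parallel and, as Figure~\ref{plot:running} already indicates, leaves the running-time scaling essentially unchanged.
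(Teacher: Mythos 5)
Your proposal is essentially sound and reaches the same two pillars the paper rests this claim on: a communication bound of $O(\sum v_d)$ messages of size $O(n)$, and scaling measurements on large benchmark instances (the paper reports a measured running time of order $O(N^{0.93})$ on up to $64$ cores and compares its $32\to 64$-core speedup with that of Chen--Lee--Li). Where you genuinely diverge is in the parallel design. You treat $\Vexplore$ as a shared work queue with dynamic work stealing and $\Vknown$ as a shared (or sharded) structure whose contention must be mitigated; the paper instead uses a fully distributed, owner-computes scheme: each lower facet is assigned to the processor determined by its hash value $\sigma(L_\xi)$, both $\Vexplore$ and $\Vknown$ are partitioned accordingly, and a newly discovered neighbour is simply sent to its owner, who performs the {\sc Visited} test locally. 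This makes deduplication and load balancing fall out of the randomness of the hash with no stealing and no shared state, which is why it runs on a distributed-memory MPI cluster; your work-stealing variant is more natural for shared memory but would need extra machinery (termination detection, remote steals) in the multi-node setting actually used. Two smaller points: the paper performs only strong scaling on four fixed instances, not the weak-scaling sweep you propose (which would be a nice addition); and your Amdahl-type knee estimate $p \approx (\sum v_i)/(n^2\sum_i E_i)$ is not the right quantity --- the serial fraction is governed by $T'/T$, and the available parallelism of the exploration is limited by the frontier width of the graph $\Gamma$, not by that ratio. Since the claim is empirical, neither route constitutes a proof; both stand or fall on the measurements, and yours proposes the right ones.
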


Modern computers are built with multiple cores, and serial
complexity analysis does not guarantee a competitive parallel running-time.
The program was successfully tested on a parallel
environment with up to 8 nodes running 8 cores each.
 
When analyzing parallel algorithms, the most important 
complexity invariant is the communication complexity. The parallel 
version of the algorithm will exchange  at most $O(\sum v_d)$ messages 
of size $O(n)$. Again, this bound alone does not imply good practical
scalability properties, so experimentation is necessary.

For each given polynomial system, let $T_N$ be the measured
running time with $N$ cores. In the benchmark families tested,
the running time was of the order of $O(N^{0,93})$.

Another parallel algorithm for the same problem was described by
\ocite{Chen-Lee-Li}. Figure 5 in their paper shows the speedup
factor for the {\em Cyclic-15} benchmark example in a similar
multi-node environment. From their picture, their speed-up factor 
$T_{32}/T_{64}$ 
from 32 to 64 cores is around $1,80$ against $1,96$ obtained here.

\claim{The program performance is comparable to the best available code.}

\rev{Other free software for mixed volume computations are MixedVol by
\ocite{Gao-Li-Mengnien}, DEMICs by \ocite{Mizutani-DEMICs} and PHCpack by
\ocite{Verschelde-795}. \revd{Closed source} programs are MixedVol2.0 by
\ocite{Lee-Li} and MixedVol3.0 by \ocite{Chen-Lee-Li}.}

At this time, \ocite{Lee-Li} and \ocite{Chen-Lee-Li} 
have the best published timings 
for the problem of computing mixed volumes and mixed cells. Since the 
algorithm in this paper is different, the results obtained here
are better for some benchmark families and worse for others.

Overall, the implementation of {\sc AllMixedCellsFull} 
appeared to be reliable for systems with
{\em output size} of around $n V \simeq 10^7$ and beyond. In each 
of the benchmark families tested, the running time grows moderately
with respect to the output size. In some of the benchmark families,
a big performance gain was obtained by using a random path heuristic. 

\medskip
\par
I would like to thank Elizabeth Gross for explaining 
graphical models to me and providing the {\em graphmodel}
example (Table~\ref{table:running}), and Ioannis Emiris for
useful conversations on mixed volume estimation.
I would also like to thank Leonid Gurvits, \revb{Bernd Sturmfels}
and \revc{four} anonymous referees
for their corrections and comments. Special thanks to the NACAD
staff for keeping the computer running despite severe hardware
malfunctions.

\section{Mixed Legendre transform and tropical varieties}

In order to introduce our main tools, it is convenient to work in a more general setting.
Some of the supports $A_i$ may be repeated, and there is some work to save by
considering {\em semi-mixed volumes}, that is mixed volumes with 
multiplicities. Through this paper, $A_1, \dots, A_s$ are
finite subsets of $\mathbb Z^n$, $s\le n$. Multiplicities
$m_1 + \dots + m_s = n$ are fixed, $m_i \ge 0$. The semi-mixed volume 
is defined by
\begin{eqnarray*}
\glossary{$s$ & Number of different supports $A_i$.}
\glossary{$m_i$ & Multiplicity of each support $A_i$.}
V &=& V( \conv{A_1}, m_1; \cdots; \conv{A_s}, m_s)
\\
&\defeq& V( \underbrace{\conv{A_1}, \dots, \conv{A_1}}_{\text{$m_1$ times}}, \dots,  \underbrace{\conv{A_s}, \dots, \conv{A_s}}_{\text{$m_s$ times}}).
\end{eqnarray*}
Thus, $n!V$ is the generic number of roots in $(\mathbb C^{\times})^n$ 
of polynomial systems of the form
\[
f_{ij}(\mathbf x) = 0, \hspace{2em}1 \le j \le m_i,\hspace{2em} 1 \le i \le s, 
\]
with
\[
f_{ij}(\mathbf x) = 
\sum_{\mathbf a \in A_i} f_{ij{\mathbf a}}x_1^{a_1}x_2^{a_2}\cdots x_n^{a_n} 
.
\]

\bigskip
\begin{definition}\label{def:generic}
\revb{A real function} 
$b: A_1 \sqcup \cdots \sqcup A_s \rightarrow \mathbb R$ is in {\em general position} if
and only if, for any subset $S$ of 
$\{ [-\mathrm e_i, \mathbf a, b(i,\mathbf a)] : \mathbf a \in A_i\}
\subset \mathbb R^s \times \mathbb R^n \times \mathbb R$
of cardinality $k \le n+s+1$, either the vectors in $S$ are linearly independent or 
$[0, \cdots, 0,1]$ is a linear
combination of the points in $S$.
\end{definition}

In particular, $b$ is in general position with probability 1, that is
outside of a certain set of Lebesgue measure zero. 
The function $b$ appears in mixed volume computation papers as a {\em random lifting}. 
\medskip
\par
\begin{figure}
\centerline{\resizebox{\textwidth}{!}{\input{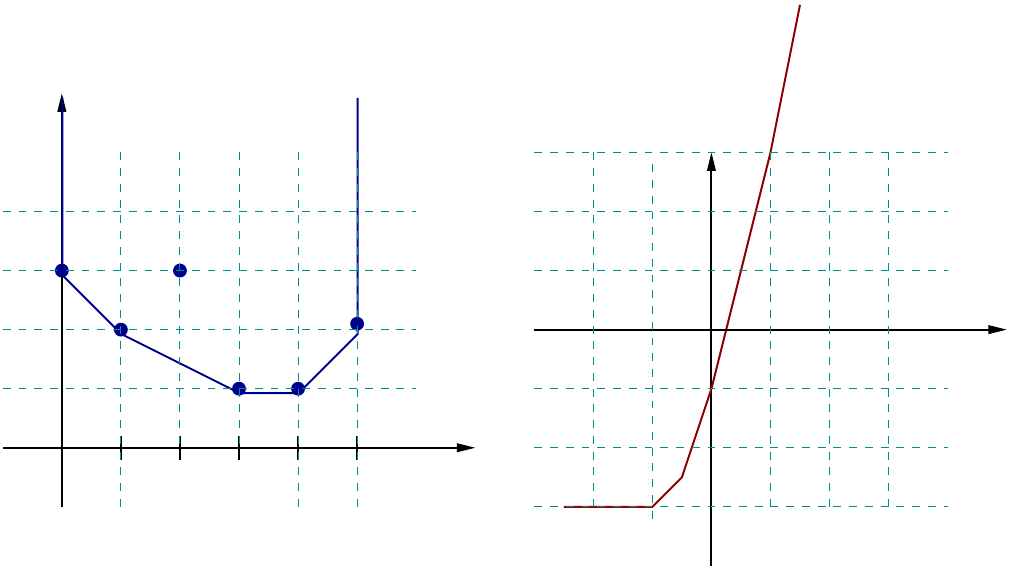_t}}}
\caption{Legendre duality for $s=1$.\label{fig:legendre}}
\end{figure}
Let $b_i = b_{|A_i}$.
\glossary{$b_i = b_i(\mathbf a) = b(i, \mathbf a)$&Lifting value for $\mathbf a \in A_i$. }
The {\em Legendre dual} of $b_i$ is the function
\glossary{$\lambda_i = \lambda_i(\mathbf \xi)$&Legendre dual for the lifting $b_i$. }
$\lambda_i: (\mathbb R^n)^* \rightarrow \mathbb R$ defined by
\[
\lambda_i (\boldsymbol \xi) = \max_{\mathbf a \in A_i} \boldsymbol \xi(\mathbf a) - b(\mathbf a) 
.
\]
The function $\lambda_i$ is convex. Its Legendre dual is the lower
convex hull of $b_i$, defined as 
the largest convex function $\hat b_i: \conv{A_i} \rightarrow 
\mathbb R$ with $\hat b_i(\mathbf a) \le b_i(\mathbf a)$ for all $\mathbf a \in A_i$. 
(Fig.\ref{fig:legendre}). \revd {Its {\em epigraph} $\{(\mathbf x,y):y \ge \hat b(\mathbf x)\}$ can
be seen as the convex hull of the set $\{(\mathbf a,b_i(\mathbf a):\mathbf a \in A_i\}$ and 
a point {\em at infinity} $(0,\infty)$.
The non-vertical \revc{faces} of the epigraph project onto a 
subdivision of the Newton polytope $\conv{A_i}$.}

\revc{\begin{remark}
In the language of tropical algebraic geometry, the Legendre dual $\lambda_i$ 
of $b_i$ is a tropical polynomial. 
\end{remark}}

\revc{Assume that $b$ is in general position.}
To any $\boldsymbol \xi \in (\mathbb R^n)^*$, we associate the numbers
$m_1(\boldsymbol \xi), \dots, m_s(\boldsymbol \xi)$ such that 
\glossary{$m_i(\boldsymbol \xi)$ & Number of times $\lambda_i(\mathbf \xi)$ attained, minus one.}
$\lambda_i(\boldsymbol \xi)$ is attained
for exactly $m_i(\boldsymbol \xi) +1$ values of $\mathbf a \in A_i$. We also associate to
the pair $(i,\boldsymbol \xi)$ a \revc{face} $L_{i,\boldsymbol \xi}$ of $\mathrm{Graph}(\hat b_i)$, 
\begin{equation}
L_{i,\boldsymbol \xi} = \left\{ (\mathbf x, \hat b(\mathbf x)): 
\mathbf x \in \conv{A_i}
\text{ and }
\lambda_i(\boldsymbol \xi) = \boldsymbol \xi(\mathbf x) - \hat b(\mathbf x)
\right\}
.
\glossary{$L_{i,\boldsymbol \xi}$ & Facet of $\mathrm{Graph}(\hat b_i)$.}
\end{equation}
\medskip
\par

Let $t_1, \dots, t_s > 0$ be indeterminates. We consider now the mixed
lifting $t_1 b_1 + \cdots + t_s b_s$ of 
\revd{the set of formal linear combinations}
$t_1 A_1 + \cdots + t_s A_s$
\revd{ by:}
\[
t_1 \mathbf a_1 + \cdots + t_s \mathbf a_s
\mapsto
t_1 b_1(\mathbf a_1) + \cdots + t_s b_s(\mathbf a_s).
\]
\revc{No ordering between the $t_i$ is assumed. Yet, to} every $\mathbf \xi \in \mathbb R^n$, we \revc{can} associate a 
\revc{face}
$L_{\mathbf \xi}$
of ${Graph}(t_1 \hat b_1 + \cdots \reva{+}t_s \hat b_s)$,
\begin{equation}
\label{eqlowerfacet}
L_{\boldsymbol \xi} = t_1 L_{1,\boldsymbol \xi} + \cdots + t_s L_{s,\boldsymbol \xi}. 
\glossary{$L_{\boldsymbol \xi}$ & Facet of $\mathrm{Graph}(\sum t_i \hat b_i)$,
$t_i$ indeterminates.}
\end{equation}
\revc{The \revc{face} $L_{\boldsymbol \xi}$ is is well defined because
the $L_{i,\boldsymbol \xi}$ are independent with respect to the specialization 
of the variables $t_i$.} This \revc{face} is
said to be of type $(m_1(\boldsymbol \xi), \dots, m_s(\boldsymbol \xi))$. 
\revc{Since $b$ is in general position, the dimension of $L_{\boldsymbol \xi}$} is
$m_1(\boldsymbol \xi)+\cdots+m_s(\boldsymbol \xi)$. 

Reciprocally, any $n$-dimensional lower facet $L$
admits a unique vector $\boldsymbol \xi \in (\mathbb R^n)^*$ with 
$L=L_{\boldsymbol \xi}$. More generally, let $L$ be a lower \revc{face} of
any dimension and define $\Xi(L) = \{ \xi: L=L_{\boldsymbol \xi}\}$. Then
$\Xi(L)$ is a (possibly unbounded) \revb{polyhedron} of dimension $n-\sum m_i(\boldsymbol \xi)$, `dual' to $L$.
\glossary{$\Xi(L)$& Possibly unbounded \revb{polyhedron} dual to \revc{face} $L$.} 

\begin{definition}\label{def:mixed}
\revb{Let $m_1, \dots, m_s$ be the fixed 
multiplicities associated to each polytope,
$m_1 + \cdots + m_s=n$.}
A {\em lower mixed facet} is a lower \revc{face} of type $(m_1, \dots, m_s)$.
A {\em mixed cell} is the projection of a lower mixed facet into $\mathbb R^n$.
\end{definition}

The mixed volume $V$ is equal to the sum of the volume of the mixed cells.
The basic idea for enumerating the mixed facets is to explore certain tropical varieties.
\revc{One can specify a tropical variety in $\mathbb R^n$
by bounding the $m_i(\boldsymbol \xi)$ below. For instance,
\[
T_i \defeq \{ \boldsymbol \xi \in \mathbb R^n: m_i(\boldsymbol \xi) \ge 1 
\}
=
\{ \boldsymbol \xi \in \mathbb R^n: \max( \mathbf a \boldsymbol \xi - b_i(\mathbf a)) \text{ attained twice}\}
\]
is the hypersurface defined by the tropical polynomial
\[
\sum_{\mathbf a \in A_i} (-b_i(\mathbf a)) \xi_1^{a_1} \xi_2^{a_2} \cdots \xi_n^{a_n}.
\]
}
\revc{Since the lifting $b_i$ is assumed to be in general position, the set
\[
\{ \boldsymbol \xi \in \mathbb R^n: m_i(\boldsymbol \xi) \ge 2 
\}
\]
is a codimension 2 hypersurface, which is the stable intersection
$2T_i \defeq T_i \cap_{\text{st}} T_i$ \cite{Maclagan-Sturmfels}*{Ch.3}.
More generally if $1 \le m \le n$,
$\{ \boldsymbol \xi \in \mathbb R^n: m_i(\boldsymbol \xi) \ge m 
\}$ is the stable intersection 
$m T_i =  T_i \cap_{\text{st}} \cdots \cap_{\text{st}} T_i$.
The lower mixed facets are the $L_{\boldsymbol \xi}$ where 
$\boldsymbol \xi \in X_n$ and $X_n$ is the point configuration
\[
X_n \defeq m_1 T_1 
\cap_{\text{st}}
\cdots
\cap_{\text{st}}
m_s T_s
= 
\{ \boldsymbol \xi \in \mathbb R^n: m_1(\boldsymbol \xi) \ge m_1,
\dots, m_s(\boldsymbol \xi) \ge m_s
\}
.\]
} 

\reva{In order to find $X_n$ we will proceed by induction on the dimension
$0 \le d \le n$. At each step we will explore a one-dimensional tropical
variety $\revc{G}_{d}$ containing $X_d$. Those varieties need to satisfy certain
genericity hypotheses, so we proceed as follows:}

Let $F_0 \subset F_1 \subset \cdots \subset F_n=\mathbb R^n$ be a \revb{flag
of generic {\em affine} subspaces}.\glossary{$F_0 \subset \cdots \subset F_n$ & Generic affine flag in $\mathbb R^n$.}\glossary{$\mm{i}{d}$ & Certain non-decreasing sequence.} \revb{At dimension $d$, we will produce $X_d \subset F_d$ 
corresponding to certain 
multiplicities $\mm{1}{d}, \dots, \mm{s}{d}$. 
If $d<n$, we
will then explore $\revc{G}_{d}$ and produce $X_{d+1}$.}
For \rev{$i \in \{1,\dots, s\}$ and $d \in \{0, \dots, n\}$}, 
\revc{we choose the multiplicities} $\reva{0=\mm{i}{0} \le \mm{i}{1} \le \cdots \mm{i}{\revb{n}}=m_i} \in
\mathbb N_0$ \revc{so that 
$\sum_i \mm{i}{d}=d$. To do this, we start at $d=0$ with all the $\mm{i}{0}=0$ and
then increase exactly one of the $\mm{i}{d}$ at each step $1 \le d \le n$.}  
\reva{We define:
\begin{eqnarray*}
X_d &=&  
F_d \cap \left(\mm{1}{d} T_1 
\cap_{\text{st}}
\cdots
\cap_{\text{st}}
\mm{s}{d} T_s \right)
\\
\revc{G}_d &=& 
F_d \cap \left(\mm{1}{d-1} T_1 
\cap_{\text{st}}
\cdots
\cap_{\text{st}}
\mm{s}{d-1} T_s \right).
\end{eqnarray*}
Using explicit notation,
}
\begin{eqnarray*}
X_d &=&  \{ \boldsymbol \xi \in F_d:\forall i,\  m_i(\boldsymbol \xi) \ge \mm{i}{d} \} \\
\revc{G}_d &=& \{ \boldsymbol \xi \in F_d:\forall i,\  m_i(\boldsymbol \xi) \ge \mm{i}{d-1} \}.
\glossary{$X_d$  & Certain zero-dimensional tropical variety.}
\glossary{$\revc{G}_d$ & Certain one-dimensional tropical variety.}
\end{eqnarray*}
\reva{The induction starts with $X_0 = \{F_0\}$. The induction step is 
possible because of the result below, stated in classical terms:}

\begin{theorem} \label{th:connected}
Assume that $V( \conv{A_1}, m_1; \cdots; \conv{A_s}, m_s) \ge 1$. 
If $b: A_1 \sqcup \cdots \sqcup A_n \rightarrow \mathbb R$ is in general position and
the flag $F_d$ generic, then
\begin{enumerate}[(a)]
\item Each set $\revc{G}_d$, $d=1, \dots, n$, is a finite closed union of line segments and half-lines. 
\item $X_{d-1} = \revc{G}_d \cap F_{d-1}$.
\item Each connected component of $\revc{G}_d$ \revc{intersects $F_{d-1}$ at least in one point}.
\item (Transversality) All points in $\revc{G}_d \cap F_{d-1}$ are in the interior of a line
segment or a half-line of $\revc{G}_d$.
\end{enumerate}
\end{theorem}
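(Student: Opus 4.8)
The plan is to exhibit $X'_d$ as a generic affine slice of a balanced polyhedral complex and then argue combinatorially. Write $Y_e=\{\xi\in\mathbb R^n: m_i(\xi)\ge m_i(e)\ \text{for all }i\}$, so that $X_d=Y_d\cap F_d$ and $X'_d=Y_{d-1}\cap F_d$. Part (b) is then immediate and needs no genericity: since $F_{d-1}\subseteq F_d$, we get $X'_d\cap F_{d-1}=\{\xi\in F_{d-1}: m_i(\xi)\ge m_i(d-1)\ \forall i\}=X_{d-1}$. The remaining work is: (i) describe $Y_{d-1}$ as a pure polyhedral complex of dimension $n-d+1$ with trivial lineality which is moreover a balanced, effective tropical cycle; (ii) slice transversally by $F_d$ to get (a) and (d); (iii) combine the balancing with a generic linear functional to get (c).

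For step (i): for a tuple $\tau=(\tau_1,\dots,\tau_s)$, $\tau_i\subseteq A_i$, let $\overline{C_\tau}=\{\xi: \xi(\mathbf a)-b(\mathbf a)=\lambda_i(\xi)\ \text{for all }\mathbf a\in\tau_i,\ \text{all }i\}$, a polyhedron cut out by $\sum_i(\#\tau_i-1)$ linear equations and some inequalities, so $Y_e=\bigcup\{\overline{C_\tau}:\#\tau_i\ge m_i(e)+1\ \forall i\}$. The crucial input from general position (Definition~\ref{def:generic}) is a dichotomy: for every $\tau$ the defining equations of $\overline{C_\tau}$ are either linearly independent — whence $\dim\overline{C_\tau}=n-\sum_i(\#\tau_i-1)$ and its relative interior is exactly the locus of $\xi$ with those active sets — or inconsistent, whence $\overline{C_\tau}=\varnothing$; and each active set $\{\mathbf a:\xi(\mathbf a)-b(\mathbf a)=\lambda_i(\xi)\}$ spans a simplex, since the points $(\mathbf a,b(\mathbf a))$ for $\mathbf a$ active are coplanar and, through $b(\mathbf a)=\xi(\mathbf a)-\lambda_i(\xi)$, any affine dependence among them would exhibit the configuration forbidden by the definition. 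As faces of a simplex are subsimplices, $Y_e$ is then the support of a genuine polyhedral complex, closed under faces and pure of dimension $n-\sum_i m_i(e)=n-e$ (every cell is a face of one with $\#\tau_i=m_i(e)+1$ for all $i$). The lineality of $\overline{C_\tau}$ is $\{\eta:\eta|_{\operatorname{span}(A_i-A_i)}=0\ \forall i\}=\{0\}$, since $V\ge1$ forces $\sum_i m_i\conv{A_i}$ to be $n$-dimensional. The harder point is that $Y_{d-1}$ is balanced; I would obtain this by writing $Y_e$ as the stable intersection of the $m_i(e)$-skeleta of the normal complexes of the $\hat b_i$ — each an effective tropical cycle, being locally at a facet the skeleton of the normal fan of a simplex, whose balancing with lattice-length weights is classical — and recalling that for $b$ in general position the stable intersection is the set-theoretic one. (Alternatively, check balancing of $X'_d$ vertex by vertex: at a vertex exactly one index $i_0$ is over-active, and $X'_d$ is locally a generic slice of the $m_{i_0}(d-1)$-skeleton of the normal fan of the simplex $\conv{\rho_{i_0}}$.)

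Step (ii) is transversality bookkeeping. For a generic flag, $F_d$ meets $Y_{d-1}$ transversally: it avoids every cell of dimension $<n-d$, meets the relative interior of each facet it touches in a single point, and meets each top cell it touches in a line segment or half-line whose (finite) endpoints lie over facets; moreover slicing commutes with passing to relative interiors. Hence $X'_d$ is a finite $1$-dimensional polyhedral complex; it has no full line (the $\overline{C_\tau}$ have trivial lineality) and no isolated vertex (by purity every facet of $Y_{d-1}$ is a face of at least two top cells, so every slice-point over a facet is an endpoint of at least two segments or half-lines). That is (a). (Non-emptiness, so that the claims are not vacuous, follows from $V\ge1$: a lower mixed facet exists, so $Y_e\supseteq\{\xi:m_i(\xi)\ge m_i\ \forall i\}\neq\varnothing$, and an effective tropical cycle of the right dimension is met by a generic $F_d$.) For (d): a point of $X'_d\cap F_{d-1}=X_{d-1}$ lies in $\operatorname{relint}\overline{C_\tau}$ for a unique $\tau$; a generic $(d-1)$-flat meets no cell of dimension $<n-d+1$, while $\dim\overline{C_\tau}\le n-d+1$ always, so $\overline{C_\tau}$ is a top cell and the point lies in $F_d\cap\operatorname{relint}\overline{C_\tau}=\operatorname{relint}(F_d\cap\overline{C_\tau})$, the interior of a segment or half-line of $X'_d$.

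For step (iii), pick an affine functional $\ell$ on $F_d$ with $F_{d-1}=\ell^{-1}(0)$; for a generic flag $\ell$ is non-constant on every edge and every half-line of $X'_d$. Suppose a connected component $K$ of $X'_d$ misses $F_{d-1}$; then $\ell$ has a fixed sign on $K$, say $\ell>0$. On any half-line $p+\mathbb R_{\ge0}u$ of $K$, $\ell(p)+t\ell(u)>0$ for all $t\ge0$ forces $\ell(u)>0$, so $\ell$ is bounded below on $K$, and as $K$ is closed its minimum is attained at some $\xi_0\in K$. Non-constancy on edges keeps $\xi_0$ out of the relative interior of any edge, so $\xi_0$ is a vertex; for every edge $e$ at $\xi_0$ with primitive direction $u_e$ and weight $w_e>0$, minimality gives $\ell(u_e)\ge0$, hence $\ell(u_e)>0$. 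But balancing at $\xi_0$ (no lineality) reads $\sum_e w_e u_e=0$, so $0=\sum_e w_e\ell(u_e)>0$ — a contradiction. Hence every connected component of $X'_d$ meets $F_{d-1}$, which is (c). The only genuinely technical ingredient is the balancing in step (i); given it, the rest is dimension counting and this halfspace argument.
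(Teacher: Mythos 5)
Your proposal is correct in outline and, at its core, runs on the same engine as the paper's proof: establish that $X'_d$ is a finite one-dimensional polyhedral set with no full lines, prove a balancing condition at each vertex, and then combine balancing with a generic linear functional cutting out $F_{d-1}$ to force every component to cross $F_{d-1}$ (your ``take the minimizer of $\ell$ on a component and contradict balancing'' is a minor variant of the paper's ``at every vertex some edge strictly increases $\ell$, so follow it to a half-line going to $+\infty$'', which is exactly what Lemma~\ref{lem:objective} is for). Parts (b) and (d) are handled the same way in both arguments. Where you genuinely diverge is in \emph{how} the balancing is obtained: the paper proves Lemma~\ref{lem:balancing} by an elementary two-line computation with the Cayley matrix of active constraints --- the identity $\Aact\left[\begin{smallmatrix}\mathrm e_q\\0\end{smallmatrix}\right]=-\sum_{j\in J_q}\mathrm e_j$, multiplied by $\Aact^{-1}$, gives $\sum_j\Delta_j\boldsymbol\xi=0$ with the unnormalized pivot directions and no weights --- whereas you appeal to tropical intersection theory (balancedness of skeleta of normal complexes, stable versus set-theoretic intersection for generic liftings). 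That machinery does deliver the weighted balancing you use, but it is precisely the one step you only sketch, and it is heavier than what is needed; your ``alternative local check at a vertex'' is, once written out, essentially the paper's Lemma~\ref{lem:balancing}. Two small points to tighten: (1) ``every ridge of a pure complex lies in at least two facets'' is not a consequence of purity alone --- it follows from balancing together with trivial lineality (or, in the paper, from the explicit count of $m_q(d-1)+2$ edges at each vertex), so your ``no isolated vertex'' claim should be routed through balancing rather than purity; (2) on a half-line $p+\mathbb R_{\ge0}u$ the positivity of $\ell$ only gives $\ell(u)\ge0$ directly, and you correctly need the genericity statement that $\ell$ is nonconstant on every edge to upgrade this to $\ell(u)>0$ --- worth making the dependence explicit, as the paper does when it notes that $\mathbf Q_d$ is orthogonal to no column of any $\Aact^{-1}$. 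What your framing buys is a cleaner conceptual picture ($X'_d$ as a generic slice of an effective tropical cycle) at the cost of importing the structure theory of stable intersections; what the paper's buys is a self-contained proof whose ingredients ($\Aact^{-1}$, the pivot directions $\Delta_j$) are reused verbatim in the pivoting algorithm of Section~\ref{sec:pivoting}.
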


Moreover, a certain balancing condition (Lemma~\ref{lem:balancing}) holds for the edges incident to a vertex.

The
{\em structure theorem} of Tropical Algebraic Geometry \cites{Cartwright-Payne, Maclagan-Sturmfels}
guarantees the connectedness
of one-dimensional \revc{intersections of} tropical varieties $\revc{G}=\revc{G}_d$, as long as the 
underlying complex variety is irreducible.
Example~\ref{counterexample} shows how $\revc{G}_{n}$ can fail to be connected on a positive measure set of liftings $b$ \reva{when the underlying complex variety is
reducible}. \revb{The precise combinatorial 
conditions for the connectedness of tropical hypersurfaces are described
by \ocite{Yu}.}

\medskip
\par

The main tool for proving Theorem~\ref{th:connected} is the mixed
Legendre transform 
$\boldsymbol\xi \mapsto \lambda_1(\boldsymbol\xi), \dots, \lambda_s(\boldsymbol\xi)$. As before,
the $t_1$, \dots, $t_s$ are positive indeterminates. The `epigraph' of
the function $t_1 b_1 + \cdots + t_s b_s$ is the set of all the pairs
$(\boldsymbol \xi, t_1 \lambda_1 + \cdots + t_s \lambda_s)$ 
so that $(\boldsymbol \lambda, \boldsymbol \xi)$ is a solution
of the system of inequalities
\begin{equation}\label{inequalities}
\mathbf C
\left[
\begin{matrix}
\boldsymbol\lambda \\ \boldsymbol\xi 
\end{matrix}
\right]
\le
\mathbf b
\end{equation}
where $\mathbf C$ is the {\em Cayley matrix}
\begin{equation}\label{big-A}
\mathbf C = \bigmatrix 
\glossary{$\mathbf C$, $\mathbf b$ & Cayley matrix and lifting vector.}
\end{equation}
and each support $A_i$ is represented by a matrix with rows $\boldsymbol a \in A_i$. As no confusion
can arise, we use the same symbol for the support and its representing matrix.

A lower \revc{face} $L$ of dimension $d$ can be represented by a pair 
$(\boldsymbol \lambda(\boldsymbol \xi),\boldsymbol \xi)$ so that $L=L(\boldsymbol \xi)$,
with exactly $s+d$ equalities in \eqref{inequalities}. In the language
of linear programming, those are known as the {\em active constraints} while the
strict inequations are deemed {\em inactive}. 

The same lower \revc{face} $L$ can also be represented by its set of active
constraints.  
Its dual $\{\boldsymbol \xi\}=\Xi(L)$ is the set of solutions
of
\begin{align*}
[-\mathrm e_i, \mathbf a ] 
    \left[\begin{matrix}\boldsymbol \lambda(\mathbf \xi) \\ \boldsymbol \xi \end{matrix}\right] &= b(i, \mathbf a) &&
\text{for $(i, \mathbf a)$ an active constraint},
\\
[-\mathrm e_i, \mathbf a ] 
    \left[\begin{matrix}\boldsymbol \lambda(\mathbf \xi) \\ \boldsymbol \xi \end{matrix}\right] &< b(i, \mathbf a) &&
\text{for $(i, \mathbf a)$ inactive.}
\end{align*}

The tropical varieties $\revc{G}_{d}$ can be explored by {\em pivoting} from each $d$-dimensional
lower \revc{face} to its neighboring \revc{faces}. 
The numerics for pivoting are a fallback from the techniques
of the simplex algorithm. This is explained in section~\ref{sec:pivoting}.

\begin{remark}\label{predecessors}
The other known practically efficient algorithms are those 
by ~\ocite{Mizutani-Takeda-Kojima}
and ~\cite{Lee-Li}. Both algorithms function by enumerating certain viable lower \revc{faces}
of dimension $k$, for $0 \le k \le n$. 
To simplify the discussion we assume $n=s$ and $m_1 = \cdots = m_n = 1$. 
In the language of this paper, those algorithms enumerate lower \revc{faces}
corresponding to solutions of \eqref{inequalities} with exactly $2$ equalities for
the $i$-th block of $\mathbf C$, only for $i$ in a cardinality $k$ subset of $\{1, \dots, n\}.$
Given one of such \revc{faces} for $k<n$, they extend it to a $k+1$ \revc{face} using ideas
of linear programming. It can happen that some \revc{faces} are not extensible, and a lot of
effort is made to devise heuristics that prune the decision tree as early as possible. No complexity analysis is available.
\end{remark}

\begin{figure}
\centerline{\resizebox{\textwidth}{!}{\includegraphics{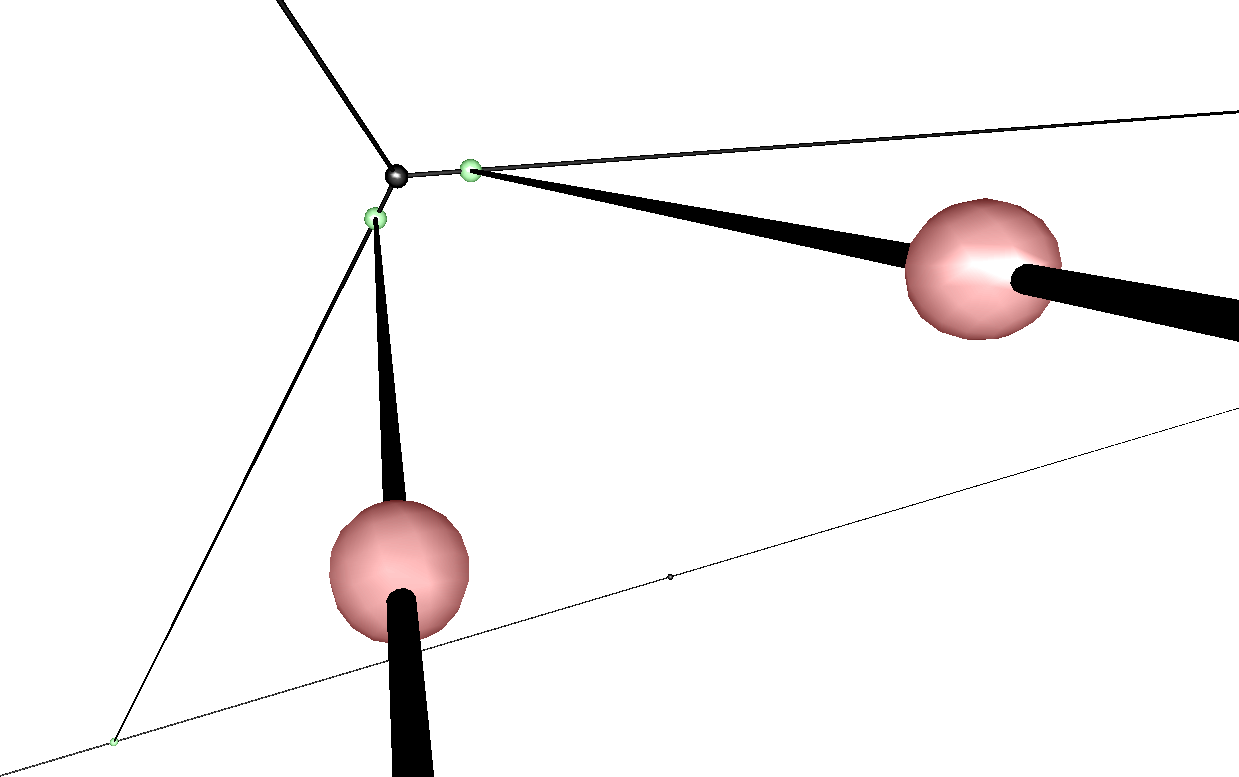}}}
\caption{\label{fig-cyclic3}Cyclic-3 polynomial system: $X_3$ is represented by the two big balls, $\revc{G}_3$ by
the two thick lines going through $X_3$. $\revc{G}_3$ cuts $F_2$ on $X_2$ (light green balls). 
\revd{$G_2$ is visible as three half-lines starting at the 
black ball afar. Albeit at infinity, $F_2$ was artificially represented
as a usual plane. This is why the lines in $G_3$ `stop' at $F_2$.
Similarly, $F_1$ is represented as a usual line, and parts of $G_1$
are visible as the thin line from bottom left to right.} 
This figure was obtained for
$\mm{1}{1}=\mm{2}{2}=\mm{3}{3}=1$.}
\end{figure}
\begin{example}\label{counterexample}(Fig.\ref{fig-cyclic3}).
Consider the {\em Cyclic-3} polynomial system, and replace the coefficients with random coefficients:
\begin{eqnarray*}
c_0 x_1 + c_1 x_2 + c_2 x_3 &=& 0\\
c_3 x_1 x_2 + c_4 x_2 x_3 + c_5 x_3 x_1 &=& 0\\
c_6 x_1 x_2 x_3 +c_7 &=& 0
\end{eqnarray*}
The zero set of the first two equations is reducible: eliminate $x_3$ from the first equation
and substitute in the second to obtain an equation of the form $A x_1^2 + 2B x_1x_2 + C x_2^2$,
which clearly factors. 
Let $\mm{1}{1}=\mm{2}{2}=\mm{3}{3}=1$ as in the figure.

The figure show that for the following values of $b_i\revb{=c_i}$, the tropical variety
$\revc{G}_3$ is disconnected \revc{for a generic flag}:

\centerline{\begin{tabular}{|cl|cl|cl|}
\hline
$b_0 =$& $0.0681718062929322 $ & $b_3 =$& $0.8654168322306781$ &  $b_6 =$ & $0.6575801418616753$ \\
$b_1 =$& $0.2764482146232536 $ & $b_4 =$& $0.6630347993316177$ &  $b_7 =$ & $0.2139433513437121$\\
$b_2 =$& $0.4266688073141105 $ & $b_5 =$& $0.2369372029023467$ & & \\
\hline
\end{tabular}}

The mixed vertices of $X_3$ are:
\[
\begin{split}
X_3 = \left\{
\left[\begin{matrix}
1.7041246197535198...10^{-01}\\-2.5568513445391900...10^{-1}\\5.2890946299653028...10^{-01}
\end{matrix} \right]
,
\left[\begin{matrix}2.8794667050532442...10^{-1}\\4.9622307883564581...10^{-1}\\-3.4053295882300698...10^{-1}
\end{matrix} \right]
\right\}
\end{split}
\]

\end{example}

\section{Proof of Theorem~\ref{th:connected}}

A {\em vertex} in $\revc{G}_d$ is a point $\boldsymbol \xi \in F_d$ such that
$\sum m_i(\boldsymbol \xi)=d$.
An {\em edge} in $\revc{G}_d$ is a one-dimensional
intersection $F_d \cap L_{\boldsymbol \xi}$
where $L_{\boldsymbol \xi}$ is the lower \revc{face} associated to
a vector $\mathbf \xi$ as in \eqref{eqlowerfacet}.
Equivalently, an edge  is the projection onto $\boldsymbol \xi$-space
of a non-empty solution
set of \eqref{inequalities} in $F_d$ with prescribed $\mm{1}{d-1}+1, \cdots, \mm{s}{d-1}+1$ equalities
in the respective block, and no more equalities.

The set $\revc{G}_d$ is a finite union of vertices
and edges. Edges may be bounded or unbounded. Bounded edges are open
segments, whose \revb{end points} are vertices of $\revc{G}_d$. Unbounded edges
cannot be a line, for otherwise all the $A_i$'s would be contained
in an hyperplane orthogonal to that line. Therefore, unbounded edges
are half-lines, bounded in one side by a vertex of $X_0$. (Fig.\ref{fig:Cohn3})
Recall that $m_i(\boldsymbol \xi)$ is the number of 
values of $\mathbf a \in A_i$ such that 
$\lambda_i(\boldsymbol \xi) = \mathbf a \boldsymbol \xi - b(i, \mathbf a)$.

\begin{figure}
\centerline{\resizebox{\textwidth}{!}{\includegraphics{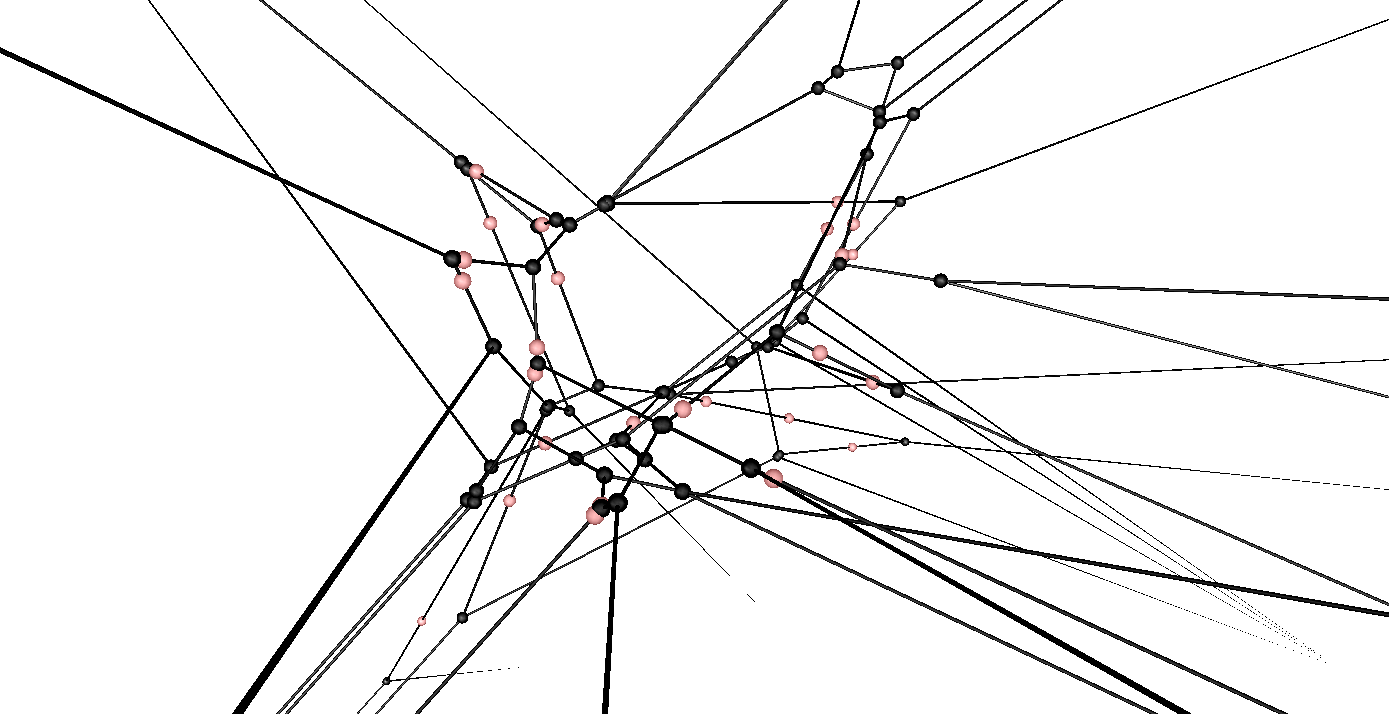}}}
\caption{\label{fig:Cohn3}
The Cohn 3 polynomial system from Posso. $X_4$ is pictured as light balls
and the rest of $\revc{G}_4$ as dark balls and lines. 
}
\end{figure}

\begin{lemma}\label{lem:balancing}
Let $\boldsymbol \xi$ be a vertex of $\revc{G}_d$ 
and let $1 \le q \le s$ be the unique integer with $m_q(\boldsymbol \xi) = \mm{q}{d-1}+1$.
\glossary{$q$& Last polytope so that $\mm{q}{d}$ increased at time $d$.} 
Then, there are precisely $m_q(\boldsymbol \xi)+1=\mm{q}{d-1}+2$ 
edges of $\revc{G}_d$ with endpoint $\boldsymbol \xi$.
Moreover, those edges are of the form $\{\boldsymbol \xi + t \Delta_j \boldsymbol \xi: t \in (0, I_j)\}$ where
$I_j$ is either a strictly positive real number, or infinity.
\glossary{$\Delta_j$& Pivoting direction, in $\xi$-space.} 
\glossary{$I_j$& Pivoting distance.} 
The following {\em balancing condition} holds:
\begin{equation}\label{eq:balancing}
\sum \Delta_j \boldsymbol \xi = 0.
\glossary{$\Delta_j \boldsymbol \xi$, $\Delta_j \boldsymbol \lambda$&Pivoting vectors while dropping constraint $j$.}
\end{equation}
\end{lemma}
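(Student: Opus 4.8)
The plan is to work with the linear-programming description of the lower facet $L_{\boldsymbol\xi}$ at the vertex $\boldsymbol\xi$, and to read the edges of $X'_d$ leaving $\boldsymbol\xi$ as the one-parameter families obtained by dropping one active constraint at a time. Since $\boldsymbol\xi$ is a vertex of $X'_d$ we have $\sum_i m_i(\boldsymbol\xi) = d$ and $m_i(\boldsymbol\xi) \ge m_i(d-1)$ for all $i$, and $\sum_i m_i(d-1) = d-1$, so exactly one index $q$ has $m_q(\boldsymbol\xi) = m_q(d-1)+1$ while every other index is tight, $m_i(\boldsymbol\xi) = m_i(d-1)$. First I would count active constraints. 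In the system \eqref{inequalities} cut down to $F_d$, the point $(\boldsymbol\lambda(\boldsymbol\xi),\boldsymbol\xi)$ satisfies $\sum_i (m_i(\boldsymbol\xi)+1) = d+s$ active constraints coming from the blocks $A_i$, and the $n-d$ affine equations defining $F_d$; by the general-position hypothesis on $b$ (Def.~\ref{def:generic}) and genericity of the flag, these are in general linear position, so they pin $(\boldsymbol\lambda,\boldsymbol\xi)\in\mathbb R^{s}\times F_d$ down to the single point $\boldsymbol\xi$ — that is, the number of active $A_i$-constraints exceeds the dimension of $F_d$ by exactly one, $d+s = (s) + d$, wait: we have $s+n$ unknowns $(\boldsymbol\lambda,\boldsymbol\xi)$, the $n-d$ flag equations, and $d+s$ block equalities, totalling $n+s$ equalities, so the solution is a point, with exactly one block equality "in surplus" relative to the $\boldsymbol\xi$ living in a $d$-dimensional flag.

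Next I would describe the edges. Moving along an edge of $X'_d$ emanating from $\boldsymbol\xi$ means relaxing the active-constraint set so that the solution set of \eqref{inequalities} in $F_d$ becomes one-dimensional, i.e. dropping exactly one of the active $A_i$-block equalities while keeping all the others and all the flag equations. If we drop one of the $m_q(\boldsymbol\xi)+1$ equalities in block $q$, the remaining $\sum_i(m_i(\boldsymbol\xi)+1) - 1$ block equalities together with the $n-d$ flag equations leave a one-parameter affine family $\{\boldsymbol\xi + t\,\Delta\boldsymbol\xi\}$, with a matching $\boldsymbol\lambda(\boldsymbol\xi) + t\,\Delta\boldsymbol\lambda$; the dropped inequality must become strict for $t$ of one definite sign (general position rules out the degenerate case), and following the ray until some currently inactive constraint becomes active (or forever) produces an edge of $X'_d$ of the stated form $\{\boldsymbol\xi + t\Delta_j\boldsymbol\xi : t\in(0,I_j)\}$. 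Dropping instead one of the (exactly $m_i(d-1)+1$) equalities in a block $i\ne q$ would violate the constraint $m_i(\boldsymbol\xi)\ge m_i(d-1)$ that defines $X'_d$, so it does not yield an edge of $X'_d$. Hence the edges at $\boldsymbol\xi$ are in bijection with the $m_q(\boldsymbol\xi)+1 = m_q(d-1)+2$ active equalities of block $q$, giving the count. I would also need to check directions are distinct and that $I_j>0$ strictly — again from general position — and that each such ray stays inside $X'_d$, which holds because along it every $m_i$ with $i\ne q$ stays at its value and $m_q$ drops by exactly one to $m_q(d-1)$, still $\ge m_q(d-1)$.

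For the balancing condition \eqref{eq:balancing}, the key is a local linear-algebra identity for the block-$q$ submatrix. Write the $m_q(\boldsymbol\xi)+1$ active rows of block $q$ as $[-\mathrm e_q,\,\mathbf a^{(0)}],\dots,[-\mathrm e_q,\,\mathbf a^{(m_q(\boldsymbol\xi))}]$ with $\mathbf a^{(k)}\in A_q$. When we drop the $j$-th such row, the pivoting direction $(\Delta_j\boldsymbol\lambda,\Delta_j\boldsymbol\xi)$ is characterised up to scale as the unique vector annihilated by all the kept active constraints (the other $A_i$-blocks, the surviving block-$q$ rows, and the flag equations) and pairing to $+1$ (say) with the dropped row — equivalently, it is the dual basis vector to the dropped constraint in the nonsingular square system of all active constraints. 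The sum $\sum_j (\Delta_j\boldsymbol\lambda,\Delta_j\boldsymbol\xi)$ over the $m_q(\boldsymbol\xi)+1$ choices is then, after fixing a consistent normalisation of the scales, the image under the inverse of the active-constraint matrix of the vector $\sum_k[-\mathrm e_q,\mathbf a^{(k)}]\,$ paired appropriately; the point is that $\sum_k$ of the block-$q$ rows is a linear combination of rows from the \emph{other} active constraints — concretely, $\sum_{k}(\mathbf a^{(k)} - \text{(any fixed }\mathbf a^{(0)}))$ lies in the linear span cut out by the flag and the other blocks once one accounts for the $-\mathrm e_q$ columns, which forces the $\boldsymbol\xi$-components of $\sum_j\Delta_j$ to cancel. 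The honest way to see this: normalise each $\Delta_j$ so that the $b$-slack of the dropped constraint decreases at unit rate; then $\sum_j\Delta_j\boldsymbol\xi$ is a direction along which \emph{every} retained active constraint stays tight and along which the $q$-th Legendre value $\lambda_q$ is differentiated in all $m_q(\boldsymbol\xi)+1$ active directions symmetrically, and the only such direction compatible with $\boldsymbol\xi\in F_d$ staying put to first order is $0$. I expect this last step — pinning down the correct normalisation of the $\Delta_j$ and extracting $\sum_j\Delta_j\boldsymbol\xi = 0$ from the structure of the Cayley matrix $\mathbf C$ — to be the main obstacle; everything before it is bookkeeping with active sets and the general-position hypothesis. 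A clean route is to recognise the identity as the statement that the barycentric-type relation $\sum_k \mathbf a^{(k)}$ among the vertices of the facet $L_{q,\boldsymbol\xi}$, which has affine dimension $m_q(\boldsymbol\xi)$, produces, under the dual pairing with $\Xi(L)$, exactly the vanishing sum of outgoing primitive edge directions — i.e. the tropical balancing of a codimension-one cell.
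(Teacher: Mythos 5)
Your counting of edges is essentially the paper's argument: you correctly reduce to the square system of active constraints (the $s+d$ block equalities plus the $n-d$ flag equations), note that only the $m_q(\boldsymbol\xi)+1$ equalities of block $q$ may be dropped, and identify each pivoting direction with a column of the inverse of the active-constraint matrix. But the balancing condition, which you yourself flag as ``the main obstacle,'' is where your proposal has a genuine gap, and your two attempted routes do not close it. Your first formulation --- that $\sum_j(\Delta_j\boldsymbol\lambda,\Delta_j\boldsymbol\xi)$ is the image under $\Aact^{-1}$ of ``the vector $\sum_k[-\mathrm e_q,\mathbf a^{(k)}]$'' --- confuses the rows of $\Aact$ with the standard basis vectors of the codomain: with the natural normalisation $\Aact(\Delta_j\boldsymbol\lambda,\Delta_j\boldsymbol\xi)=-\mathrm e_j$, the relevant vector is $\sum_{j\in J_q}\mathrm e_j$, not the sum of the block-$q$ rows, and the claim that the sum of the block-$q$ rows lies in the span of the other rows is neither needed nor what makes the argument work. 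Your second, ``honest'' route is circular at exactly the decisive point: you argue that the only direction keeping the retained constraints tight and ``compatible with $\boldsymbol\xi\in F_d$ staying put to first order'' is $0$, but whether the $\boldsymbol\xi$-component stays put is precisely what is to be proved.

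The missing step is a one-line identity coming from the $-\mathrm e_i$ column block of the Cayley matrix. The vector $(\mathrm e_q,0)\in\mathbb R^s\times\mathbb R^n$ pairs with each active row $[-\mathrm e_q,\mathbf a]$ of block $q$ to $-1$, with each active row $[-\mathrm e_i,\mathbf a]$, $i\ne q$, to $0$, and with each flag row $[0,Q_k]$ to $0$; hence
\[
\Aact\left[\begin{matrix}\mathrm e_q\\ 0\end{matrix}\right]=-\sum_{j\in J_q}\mathrm e_j .
\]
Applying $\Aact^{-1}$ (invertible by genericity, cf.\ Lemma~\ref{lem:genericity}) and using $(\Delta_j\boldsymbol\lambda,\Delta_j\boldsymbol\xi)=-\Aact^{-1}\mathrm e_j$ gives $\sum_{j\in J_q}(\Delta_j\boldsymbol\lambda,\Delta_j\boldsymbol\xi)=(\mathrm e_q,0)$, whose $\boldsymbol\xi$-component is exactly \eqref{eq:balancing}; no barycentric or tropical-balancing input is needed. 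Note also that the sign of the normalisation is forced, not ``$+1$ (say)'': one must have $[-\mathrm e_q,\mathbf a^{(j)}](\Delta_j\boldsymbol\lambda,\Delta_j\boldsymbol\xi)=-1$ so that the dropped constraint becomes slack (remains feasible) for $t>0$; with the opposite sign the ray leaves the feasible region and, moreover, the cancellation above would acquire the wrong sign pattern.
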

\revb{This follows directly from the {\em balancing condition} in 
tropical algebraic geometry. For the benefit of general readers, a 
self-contained proof is given below.}

\begin{proof}
Let $\Aact$, $\bact$
\glossary{$\Aact$, $\bact$&Matrix and vector of active constraints.}
be the submatrix of the Cayley matrix (resp. subvector of $\mathbf b$)
with rows in the set of active constraints of 
vertex $\boldsymbol \xi$, plus the $n-d$ affine constraints in $\xi$ that define $F_d$. So $\Aact$ is a $(s+n) \times (s+n)$ matrix and $\bact \in \mathbb R^{s+n}$. 
Let $\Ainact$
(resp. $\binact$)
\glossary{$\Ainact$, $\binact$&Matrix and vector of inactive constraints.}
be \revb{the} submatrix (resp. subvector) of inactive constraints associated to vertex
$\boldsymbol \xi$.
Therefore we assumed that
\[
\Aact \left[\begin{matrix}\boldsymbol \lambda \\ \boldsymbol \xi \end{matrix} \right] = \bact,
\hspace{2em},
\Ainact \left[\begin{matrix}\boldsymbol \lambda \\ \boldsymbol \xi \end{matrix} \right]< \binact
.
\]

The edges incident to $\boldsymbol \xi$ are obtained by releasing
one of the $m_q(\boldsymbol \xi)+1$ equalities in the $q$-th block.

Suppose we release the $j$-th equality, where the index $j$ corresponds to
the row in $\Aact$ associated to that equality.
In particular $j \in J_q=\{ j: 1 \le j - (\sum_{i<q} \mm{i}{d-1}) - q \le \mm{i}{d-1}+2 \}$.
We obtain a system of equalities of the form
\[
\Aact 
\left[ 
\begin{matrix} \boldsymbol \lambda + t\Delta_j \boldsymbol \lambda \\
\boldsymbol \xi + t\Delta_j \boldsymbol \xi \end{matrix} \right]
= \bact - t\mathrm e_j
\]
where $\mathrm e_j$ is the $j$-th canonical basis vector and $t>0$ is
indeterminate. This simplifies to
\[
\Aact 
\left[ 
\begin{matrix} \Delta_j \boldsymbol \lambda \\
\Delta_j \boldsymbol \xi \end{matrix} \right]
= - \mathrm e_j
.
\]

We can solve and find
\[
\left[ 
\begin{matrix} \Delta_j \boldsymbol \lambda \\
\Delta_j \boldsymbol \xi \end{matrix} \right]
= 
- \Aact^{-1} \mathrm e_j
.
\]

The balancing condition follows from the fact that
\[
\Aact 
\left[
\begin{matrix}
\mathrm e_q \\
0
\end{matrix}
\right]
=
- \sum_{j\in J_q} \mathrm e_j .
\]
Multiplying by $\Aact^{-1}$,
\[
\left[
\begin{matrix}
\mathrm e_q \\
0
\end{matrix}
\right]
= -\sum_j \Aact^{-1} \mathrm e_j 
=
\left[
\begin{matrix}
\sum_{j\in J_q} 
\Delta_j \boldsymbol \lambda \\
\sum_{j\in J_q} 
\Delta_j \boldsymbol \xi 
\end{matrix}
\right]
.\]
\revb{
From this we deduce that $\sum_{j\in J_q} 
\Delta_j \boldsymbol \xi =0$.}
\end{proof}

An immediate consequence of \eqref{eq:balancing} is the following Lemma,
to be used in the proof of Theorem~\ref{th:connected}(c).

\begin{lemma}\label{lem:objective} Let $\mathbf Q \in (\mathbb R^n)^*$ be
an arbitrary objective function. Then either $\mathbf Q\ \Delta_j \boldsymbol \xi > 0$ for some
$j \in J_q$, or $\mathbf Q\ \Delta_j \boldsymbol \xi = 0$ for all $j \in J_q$.
\end{lemma}

In order to pivot from \revc{face} to \revc{face}, we need to know the value
of $I_j$ in Lemma~\ref{lem:balancing}.

\begin{lemma}\label{lem:Ij}
In the conditions above,
\[
I_j = \min
\left(
\left\{
+\infty
\right\}
\cup
\left\{ \rule{0em}{2.4ex}
t(i,\mathbf a): \mathbf a \in A_i, \lambda_i(\boldsymbol \xi)<\lambda_i \text{ and } t(i,\mathbf a) > 0 \right\}
\right)
\]
where
\[
t(i,\mathbf a) = \frac{ [-\mathrm e_i, \mathbf a]
\left[ \begin{matrix} \lambda \\ \xi \end{matrix} \right] 
-
b(i,\mathbf a)
}
{
[-\mathrm e_i,\mathbf a]  \Aact^{-1} \mathrm e_j
}
.
\glossary{$t(i,\mathbf a)$&score of inactive constraint $[i, \mathbf a]$}
\]
\end{lemma}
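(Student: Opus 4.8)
The plan is to determine, for a fixed pivoting direction $(\Delta_j\boldsymbol\lambda, \Delta_j\boldsymbol\xi) = -\Aact^{-1}\mathrm e_j$, exactly how far we may travel along the ray $t\mapsto \boldsymbol\xi + t\,\Delta_j\boldsymbol\xi$ before leaving $X'_d$, i.e. before one of the inactive constraints becomes active. First I would recall from the proof of Lemma~\ref{lem:balancing} that for $t>0$ the point $(\boldsymbol\lambda + t\Delta_j\boldsymbol\lambda, \boldsymbol\xi + t\Delta_j\boldsymbol\xi)$ satisfies every active constraint with equality and satisfies the released $j$-th constraint strictly (since $\bact - t\mathrm e_j$ has strictly decreased in the $j$-th coordinate, which is the direction making that inequality strict). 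The edge persists precisely as long as all the \emph{inactive} constraints $[-\mathrm e_i,\mathbf a]\bigl[\begin{smallmatrix}\boldsymbol\lambda\\\boldsymbol\xi\end{smallmatrix}\bigr] < b(i,\mathbf a)$ remain strict.

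The key computation is to substitute the moving point into a generic inactive constraint $(i,\mathbf a)$ and track the residual as a function of $t$. Writing $r_{i,\mathbf a}(t) = b(i,\mathbf a) - [-\mathrm e_i,\mathbf a]\bigl[\begin{smallmatrix}\boldsymbol\lambda + t\Delta_j\boldsymbol\lambda\\\boldsymbol\xi + t\Delta_j\boldsymbol\xi\end{smallmatrix}\bigr]$, this is an affine function of $t$:
\[
r_{i,\mathbf a}(t) = \Bigl( b(i,\mathbf a) - [-\mathrm e_i,\mathbf a]\bigl[\begin{smallmatrix}\boldsymbol\lambda\\\boldsymbol\xi\end{smallmatrix}\bigr] \Bigr) + t\, [-\mathrm e_i,\mathbf a]\,\Aact^{-1}\mathrm e_j,
\]
using $[-\mathrm e_i,\mathbf a]\bigl[\begin{smallmatrix}\Delta_j\boldsymbol\lambda\\\Delta_j\boldsymbol\xi\end{smallmatrix}\bigr] = -[-\mathrm e_i,\mathbf a]\,\Aact^{-1}\mathrm e_j$. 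At $t=0$ the constant term is strictly positive (the constraint is inactive at $\boldsymbol\xi$). The constraint is first violated at $t = t(i,\mathbf a) = \bigl( [-\mathrm e_i,\mathbf a]\bigl[\begin{smallmatrix}\lambda\\\xi\end{smallmatrix}\bigr] - b(i,\mathbf a) \bigr) \big/ \bigl( [-\mathrm e_i,\mathbf a]\,\Aact^{-1}\mathrm e_j \bigr)$ exactly when the denominator is positive, giving a positive root; when the denominator is $\le 0$ the residual stays positive for all $t>0$ and that constraint never obstructs. Thus $I_j$ is the minimum of $+\infty$ and all such positive $t(i,\mathbf a)$. The restriction to indices with $\lambda_i(\boldsymbol\xi) < \lambda_i$ is just the observation that these are precisely the $(i,\mathbf a)$ that are inactive at $\boldsymbol\xi$, i.e. those that contribute a candidate bound; the ones attaining $\lambda_i(\boldsymbol\xi)$ are already active (or the released one) and handled by the edge structure itself.

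The main obstacle I anticipate is purely bookkeeping rather than conceptual: one must be careful that the affine flag constraints defining $F_d$ are among the rows of $\Aact$, so that $\Aact^{-1}\mathrm e_j$ is well-defined and the motion stays in $F_d$ automatically; and one must confirm that the released $j$-th constraint does not itself re-enter the candidate list for $I_j$ (it is moving strictly away, so its $t(i,\mathbf a)$ is negative or the denominator has the wrong sign). One should also check the degenerate possibility that two inactive constraints are hit simultaneously — genericity of $b$ (Def.~\ref{def:generic}) rules this out with probability one, so the minimum is attained at a unique $(i,\mathbf a)$, which is what makes the pivot well-defined. Finally, the case $I_j = +\infty$ corresponds to no inactive constraint ever being violated, i.e. the half-line edges of Lemma~\ref{lem:balancing}, so the formula is consistent with the earlier structural result.
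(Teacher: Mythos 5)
Your proposal follows essentially the same route as the paper: move along the ray $(\boldsymbol\lambda,\boldsymbol\xi)-t\,\Aact^{-1}\mathrm e_j$, note that the remaining active constraints stay equalities while each inactive constraint produces a residual that is affine in $t$, and take the smallest positive root, with $I_j=+\infty$ when there is none. The one thing to fix is a sign inversion in your middle step. Since the constraint $(i,\mathbf a)$ is inactive at $\boldsymbol\xi$, the numerator of $t(i,\mathbf a)$ is strictly negative, and the slope of your residual $r_{i,\mathbf a}(t)$ is exactly the denominator $[-\mathrm e_i,\mathbf a]\,\Aact^{-1}\mathrm e_j$; hence the constraint is reached at a positive time precisely when that denominator is \emph{negative} (equivalently, precisely when $t(i,\mathbf a)>0$), whereas a denominator $\ge 0$ makes the residual nondecreasing, so that constraint never obstructs. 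Your sentence ``exactly when the denominator is positive, giving a positive root'' asserts the opposite and is internally inconsistent with the negativity of the numerator. Your closing criterion --- minimize over the constraints with $t(i,\mathbf a)>0$, defaulting to $+\infty$ --- is the correct one and coincides with the paper's proof, so the slip is local and does not damage the argument; the side remarks (the released constraint moves strictly away, uniqueness of the minimizer by genericity, half-lines corresponding to $I_j=+\infty$) are consistent with Lemma~\ref{lem:balancing} and are only needed later, in Lemma~\ref{lem:Ij1}.
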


\begin{proof}
In order to find $I_j$, we solve
\[
\Ainact 
\left[
\begin{matrix}
\boldsymbol \lambda+t\Delta_j \boldsymbol \lambda \\
\boldsymbol \xi + t \Delta_j \boldsymbol \xi 
\end{matrix}
\right]
\le
\binact
\]
with exactly one equality.
This is the same as
\[
\Ainact 
\left[
\begin{matrix}
\boldsymbol \lambda\\
\boldsymbol \xi  
\end{matrix}
\right]
-
\binact
\le
t \Ainact \Aact^{-1} \mathrm e_j
\]
with $t>0$ and exactly one equality. The left hand side is
always negative. 
For each inactive constraint $[-\mathrm e_i, \mathbf a]$,
$\mathbf a \in A_i$, set
\[
t(i,\mathbf a)= \frac{ [-\mathrm e_i, \mathbf a] \left[
\begin{matrix}
\boldsymbol \lambda\\
\boldsymbol \xi  
\end{matrix}
\right]
-
b(i,\mathbf a)
}
{
[-\mathrm e_i,\mathbf a]  \Aact^{-1} \mathrm e_j
}
.
\]

Then $I_j$ is the minimal positive value of $t(i,\mathbf a)$ where $[\mathrm e_i,\mathbf a]$ is
an inactive constraint. In case the set of positive values is empty,
$I_j = +\infty$.
\end{proof}

\begin{proof}[Proof of Theorem~\ref{th:connected}]
We already checked (a), and (b) holds by construction. We prove (c) now.
Let $\mathbf Q_d$ be a non-zero normal vector to $F_{d-1} \subseteq F_d$ so that $F_{d-1}
= \{\boldsymbol \xi \in F_d: \mathbf Q_d \boldsymbol \xi = r_d\}$.
Each connected component
of $\revc{G}(d)$ has finitely many vertices. There is a finite number of possible matrices $\Aact$.
The first $s+d$ rows of each $\Aact$ are constraints and the remaining $n-d$ rows are 
obtained from constraints $\mathbb Q_{d+1}, \cdots, \mathbb Q_n$.
Because the flag $F_0 \subset F_1 \subset \cdots \subset F_n$ is generic, $\mathbf Q_d$ is not orthogonal 
to any of the columns of any $\Aact^{-1}$. Thus, at all vertices of $\revc{G}(d)$, 
$\mathbf Q_d \Delta_j \boldsymbol \xi \ne 0$ and $\mathbf Q_d \Delta_j \boldsymbol \xi$ assumes both strictly positive and negative values.
As there is a finite number of vertices in each connected component $C$ of $\revc{G}(d)$, at least one of the strictly positive (resp. strictly negative)
values corresponds to a half-line. 
Hence, the connected component $C$ has points 
with $\mathbf Q_d \boldsymbol \xi \rightarrow -\infty$ and $\mathbf Q_d \boldsymbol \xi \rightarrow +\infty$. 

By the intermediate value theorem, $C$ must
cut $F_{d-1} = \{ \boldsymbol \xi: \mathbf Q_d \boldsymbol \xi = r_d \}$ at least once. This
proves (c). The transversality condition (d) follows from
the genericity of $r_d$. 
\end{proof}

\begin{remark}
If one picks 
\revb{$r_n \gg r_{n-1} \gg \cdots \gg r_{d+1}$}
then the transversality condition still holds. 
\end{remark}

\section{Facet pivoting}
\label{sec:pivoting}

In this section we produce the equations for pivoting from a point
of $X_d$ to its neighbors in $\revc{G}_d$ \revb{(Lemma \ref{lem:Ij1})} 
and also to pivot \revb{between} $X_d$ \revb{and}
$X_{d+1}$ \revb{(Lemmas \ref{lem:Ij2} and \ref{lem:Ij3})}. We start with a well-known Lemma that can be used to relate
the matrices of active constraints in two adjacent vertices of $\revc{G}_d$.

\begin{lemma}[Rank 1 updates]\label{rank_one_updates} Let $A$ and $B$ be $n \times n$ matrices with $B = A^{-1}$. 
Let $u, v \in \mathbb R^n$. Then $A - uv^T$ is invertible if and only if $v^TBu \ne 0$. Moreover, if $A - uv^T$ is
invertible,
\[
(A - u v^T) ^{-1} = B + \frac{1}{1-v^TBu} Buv^TB .
\]
\end{lemma}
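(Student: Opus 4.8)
The plan is to verify the formula directly by computing the product $(A - uv^T)\left(B + \frac{1}{1-v^TBu}Buv^TB\right)$ and checking it equals the identity, while extracting the invertibility criterion along the way. First I would handle the invertibility claim: suppose $v^TBu \ne 0$. One way is to note $A - uv^T = A(I - Buv^T)$, and since $A$ is invertible it suffices to show $I - Buv^T$ is invertible. The matrix $Buv^T$ is rank one with the single nonzero eigenvalue $v^TBu$ (because $(Buv^T)w = (v^Tw)(Bu)$, so $Bu$ is an eigenvector with eigenvalue $v^TBu$, and the orthogonal complement of $v$ is the kernel). Hence $I - Buv^T$ has eigenvalues $1$ (with multiplicity $n-1$) and $1 - v^TBu$, so it is invertible precisely when $1 - v^TBu \ne 0$. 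To get the stated criterion I would observe that $1 - v^TBu \ne 0$ is \emph{not} quite the same as $v^TBu \ne 0$, so I need to be a little careful — actually the cleanest route to the criterion as stated is: $A - uv^T$ is singular iff there is a nonzero $w$ with $Aw = (v^Tw)u$, i.e. $w = (v^Tw)Bu$; if $v^Tw = 0$ this forces $w=0$, so we may scale $v^Tw = 1$, giving $w = Bu$ and the consistency condition $v^TBu = 1$. Wait — this shows singularity iff $v^TBu = 1$, matching the eigenvalue computation. I would then reconcile with the paper's statement by noting their $u,v$ play the role where the relevant quantity is $v^TBu$ and the hypothesis should read $v^TBu \ne 1$; if the paper intends the convention with a sign or a different normalization I would state the criterion in the form that makes the displayed inverse formula well-defined, namely that the denominator $1 - v^TBu$ is nonzero.

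Second, for the formula itself, assuming $1 - v^TBu \ne 0$, I would just multiply out. Write $c = \frac{1}{1-v^TBu}$ and compute
\[
(A - uv^T)(B + cBuv^TB) = AB + cAB uv^TB - uv^TB - c\,uv^TBuv^TB.
\]
Using $AB = I$ and writing $\alpha = v^TBu$ (a scalar), the middle two terms with the rank-one factor $uv^TB$ collect as $\left(c - 1 - c\alpha\right)uv^TB$. Since $c(1-\alpha) = 1$, we get $c - c\alpha = 1$, so $c - 1 - c\alpha = 0$, and the product is $I$. An identical computation on the other side gives $\left(B + cBuv^TB\right)(A - uv^T) = I$, confirming the two-sided inverse. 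This computation is entirely routine.

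The main obstacle here is not the algebra but getting the invertibility criterion stated in a form consistent with the displayed formula: the condition that must appear is that $1 - v^TBu \ne 0$ (equivalently $v^TBu \ne 1$), and I would present the lemma with that as the hypothesis, then note the equivalence with whatever normalization the rest of the paper uses when it applies this lemma to the Cayley matrix updates. Everything else — the rank-one eigenvalue structure and the verification of the Sherman–Morrison identity by direct multiplication — is standard linear algebra that I would carry out in two or three lines.
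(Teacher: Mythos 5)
Your proof is correct and, apart from cosmetic differences, follows the same standard route as the paper: the paper factors $A-uv^T=(I-uv^TB)A$, notes $\det(A-uv^T)=\det(A)\det(I-Buv^T)$ so that invertibility is governed by $1-v^TBu$, and then verifies $(I-uv^TB)^{-1}=I+\tfrac{1}{1-v^TBu}uv^TB$ by multiplication; you instead multiply the full claimed inverse against $A-uv^T$ and get the invertibility criterion from the kernel/eigenvalue structure of the rank-one perturbation. These buy the same thing, and your algebra is right.

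Your side remark is also well taken and worth keeping: the lemma as printed says ``invertible if and only if $v^TBu\ne 0$,'' but the correct criterion (the one your argument and the paper's own proof actually establish, and the one needed for the displayed formula to make sense) is $1-v^TBu\ne 0$, i.e.\ $v^TBu\ne 1$. The paper's proof line $\det(A-uv^T)=\det(A)\det(I-A^{-1}uv^T)=1-v^TA^{-1}u$ also drops the factor $\det(A)$, though this does not affect the invertibility conclusion since $\det(A)\ne 0$. So there is no gap in your proposal; it proves the corrected statement, which is the one the rest of the paper uses.
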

\begin{proof}
First of all, notice that $\det (A-uv^T) = \det(A) \det(I-A^{-1}u v^T) = 1-v^T A^{-1} u$.
Assuming this is different from zero,
\[
(A - uv^T)^{-1} = 
B (I - uv^TB)^{-1}  
=
B (I + \frac{1}{1-v^TBu} uv^TB)
.
\]
The last equality follows from multiplying $I-uv^TB$ and $I + \frac{1}{1-v^TBu} uv^TB$.
\end{proof}

\begin{figure}
\centerline{\resizebox{\textwidth}{!}{\includegraphics{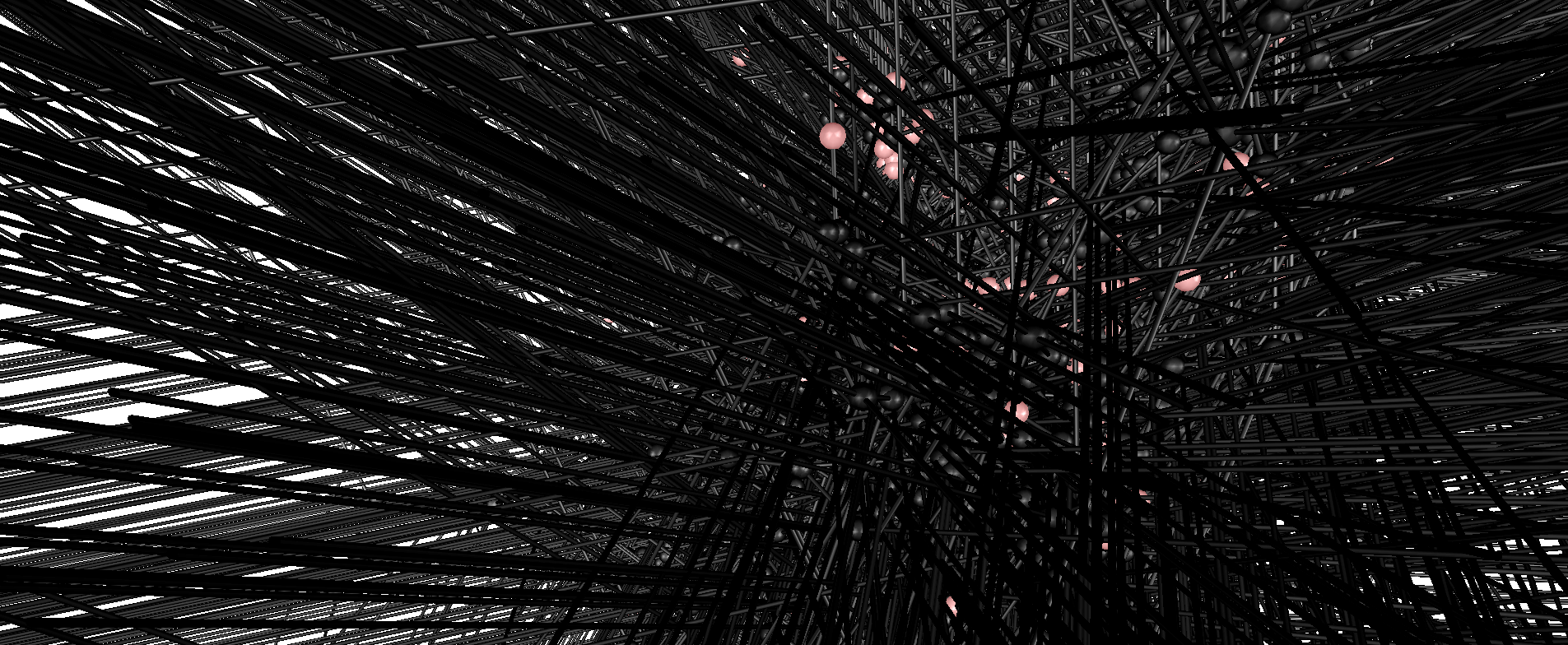}}}
\centerline{\resizebox{\textwidth}{!}{\includegraphics{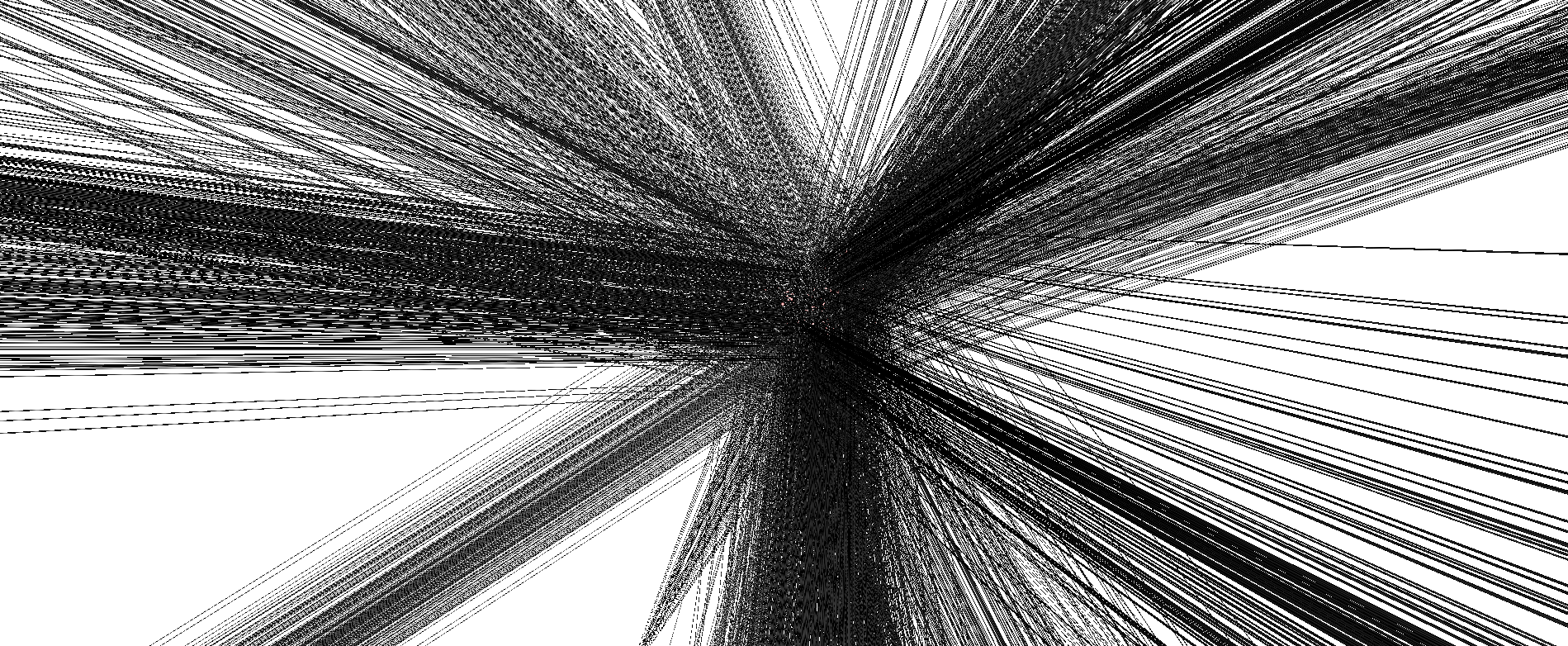}}}
\caption{\label{fig:infinity}
Those pictures illustrate the reason we prefer to take an affine
flag `at infinity'.
The top picture shows $\revc{G}_{18}$ for the 
sonic3 polynomial system (18 equations in 18 variables) viewed 
from close to the
origin. The pipes that are interrupted are actually crossing the
screen. The bottom picture shows $\revc{G}_{18}$ viewed from far away.
}
\end{figure}

Now, assume that $F_{d}\subset F_{d+1}$ is defined by the equation $Q_{d+1} \xi = R^{n-d} r_{d+1}$, where $r_{d+1} > 0$ and 
$R>0$ is a parameter that
will tend to infinity. The reason for the choice $R \rightarrow \infty$
comes from practical considerations.

Figure~\ref{fig:infinity} shows a typical graph $\revc{G}_d$. The region
close to the origin seems overcrowded with interlaced edges, while
the `spikes' do not appear to be as numerous as the finite edges. 
Cutting by a plane at infinity minimizes the number of intersections,
hence the number of \revc{faces} to be found. 
There are other advantages related to the stability of the numerical
implementation that will be discussed  
in section~\ref{sec:implementation}.

We write down below the pivoting equations for $\revc{G}_d$.
Let $\{\xi\}$ be a vertex of $\revc{G}_d$ and $\lambda=\lambda(\xi)$.
This means that there are $s+d$ active constraints such that
\begin{equation}\label{explicit-Aact}
\Aact
=
\left[
\begin{matrix}
-\mathrm{e}_{i_1}     & a_{1} \\
\vdots           & \vdots \\
-\mathrm{e}_{i_{d}} & a_{d} \\
0                & Q_{d+1} \\
\vdots & \vdots \\
0                & Q_{n} \\
\end{matrix}
\right]
\hspace{1em}
\text{and}
\hspace{1em}
\bact = 
\left[
\begin{matrix}
b(i_1, a_1) \\
\vdots\\
b(i_{d},a_{d}) \\
R^{n-d} r_{d+1} \\
\vdots\\
R r_{n} 
\end{matrix}
\right]
\end{equation}
are respectively the matrix of active constraints and the vector of active constraints.
There are at least $\mm{i}{d-1}+1$ occurrences of $\mathrm{e}_i$. There is a unique
$1 \le q \le s$ so that there are $\mm{q}{d-1}+2$ occurrences of  $\mathrm{e}_q$,
and those are the active constraints that may be `dropped'.
Facets are uniquely defined by the set of active constraints:

\begin{lemma}\label{lem:genericity}
Let $R \ge 0$.
Assume that 
\[
\Aact
\left[
\begin{matrix}
\lambda \\
\xi \\
\end{matrix}
\right]
=
\bact
.
\]
Then $\Aact$ is invertible. Moreover,
there cannot be any extra $(i,a)$ so that
\[
[\begin{matrix} -\mathrm{e_i}, & a\end{matrix} ] \left[
\begin{matrix}
\lambda \\
\xi \\
\end{matrix}
\right]
=
\mathbf b (i,\mathbf a)
\]
\end{lemma}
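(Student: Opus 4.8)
The plan is to read both assertions as general‑position statements and reduce everything to linear algebra in $\mathbb R^{s+n+1}$: encode each constraint $[-\mathrm e_i,\mathbf a]\left[\begin{smallmatrix}\boldsymbol\lambda\\\boldsymbol\xi\end{smallmatrix}\right]\le b(i,\mathbf a)$ by its \emph{augmented row} $[-\mathrm e_i,\mathbf a,b(i,\mathbf a)]$, and each flag constraint by $[0,Q_j,*]$ with $*$ the corresponding entry of $\bact$. Let $W$ be the span of the $s+n$ augmented rows attached to $\Aact,\bact$. The elementary observation that drives the whole argument is that a solution of $\Aact\left[\begin{smallmatrix}\boldsymbol\lambda\\\boldsymbol\xi\end{smallmatrix}\right]=\bact$ is exactly a vector $(\boldsymbol\lambda,\boldsymbol\xi,-1)$ orthogonal to every augmented active row; pairing against $(\boldsymbol\lambda,\boldsymbol\xi,-1)$ therefore shows $\mathrm e_{s+n+1}=[0,\dots,0,1]\notin W$ (a dependency realizing $\mathrm e_{s+n+1}$ would give $1=0$). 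Granting this, invertibility of $\Aact$ follows once I show \textbf{(i)} the $s+n$ augmented active rows are linearly independent, so $\dim W=s+n$, because then \textbf{(ii)} the coordinate projection $\mathbb R^{s+n+1}\to\mathbb R^{s+n}$ forgetting the last entry has kernel $\mathbb R\,\mathrm e_{s+n+1}$, which meets $W$ trivially, hence is injective on $W$ and carries $W$ onto the row space of $\Aact$; so $\Aact$ has rank $s+n$.

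For step (i) I would split the $s+d$ augmented Cayley rows from the $n-d$ augmented flag rows. The Cayley rows form a subset of the configuration of Definition~\ref{def:generic} of cardinality $s+d\le n+s+1$, so by general position of $b$ they are either independent or $\mathrm e_{s+n+1}$ lies in their span; the latter is impossible since that span sits inside $W$ (alternatively, the realizing coefficients would have vanishing block sums, and using $b(i_k,\mathbf a_k)=\boldsymbol\xi\cdot\mathbf a_k-\lambda_{i_k}$ one gets $\sum_k c_k b(i_k,\mathbf a_k)=0\ne1$). So the augmented Cayley rows are independent. The augmented flag rows are independent because the normals $Q_{d+1},\dots,Q_n$ of a strictly increasing affine flag are independent. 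Finally, a relation mixing the two families, read in the first $s$ coordinates, forces the Cayley coefficients to have vanishing block sums, whence the relation produces a nonzero vector of $N_d:=\operatorname{span}(Q_{d+1},\dots,Q_n)$ inside the fixed subspace $D$ spanned by the within–block differences of the active support points. Since there are $m_q(\boldsymbol\xi)+1$ distinct active points in block $q$, $\dim D\le\sum_q m_q(\boldsymbol\xi)=d$, while $\dim N_d=n-d$, so a generic flag makes $D\cap N_d=\{0\}$ — contradiction.

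For the second assertion, suppose some extra constraint $(i,\mathbf a)$ is also active at the (now unique) solution. Its augmented row $w_0=[-\mathrm e_i,\mathbf a,b(i,\mathbf a)]$ is orthogonal to $(\boldsymbol\lambda,\boldsymbol\xi,-1)$, hence lies in the hyperplane spanned by the $s+n$ independent augmented active rows, so $w_0=\sum_k c'_k(\text{Cayley})_k+\sum_j c''_j(\text{flag})_j$. Definition~\ref{def:generic} applied to $\{w_0\}\cup\{\text{augmented Cayley active rows}\}$ (cardinality $\le n+s+1$) shows this set is linearly independent — the $\mathrm e_{s+n+1}$–alternative is again killed by pairing with $(\boldsymbol\lambda,\boldsymbol\xi,-1)$ — so not all $c''_j$ vanish, and $w_0-\sum_k c'_k(\text{Cayley})_k=\sum_j c''_j(\text{flag})_j$ is a nonzero vector lying in both $\operatorname{span}(\text{flag rows})$ and $\operatorname{span}(\{w_0\}\cup\text{Cayley active rows})$. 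The flag rows have zero first $s$ coordinates while the latter span surjects onto the first $s$ coordinates, so this intersection takes place inside $\{0\}^s\times\mathbb R^{n+1}$, between a generic $(n-d)$‑dimensional subspace and a fixed $(d+1)$‑dimensional one; since $(n-d)+(d+1)=n+1$, a generic flag makes it trivial, the final contradiction.

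The main obstacle is precisely the interaction between the two kinds of rows: Definition~\ref{def:generic} only controls the Cayley rows, so one must invoke genericity of the flag to rule out linear relations straddling the Cayley and flag blocks, and one must check that the transversality statements ($D\cap N_d=\{0\}$, and the $\{0\}^s\times\mathbb R^{n+1}$ intersection being trivial) hold for every $R\ge0$: the bad set of $R$ is cut out by polynomial equations, hence finite or all of $[0,\infty)$, and genericity of the flag excludes the latter. The one genuinely combinatorial point, which I would spell out carefully, is the translation "the Cayley coefficients have vanishing block sums'' $\Rightarrow$ "the corresponding support combination lies in the difference span $D$, of dimension at most $d$''.
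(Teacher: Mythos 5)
Your argument is correct in substance, and for the invertibility half it is essentially the paper's own proof in different packaging: the paper likewise reduces the statement to (i) linear independence of the augmented active Cayley rows $[-\mathrm e_i,\mathbf a,b(i,\mathbf a)]$, where the $[0,\dots,0,1]$-alternative in Definition~\ref{def:generic} is excluded by the existence of the solution — the paper encodes this as the equality $\mathrm{rank}(\text{Cayley block})=\mathrm{rank}(\text{augmented Cayley block})$, which is the same fact as your orthogonality of every augmented row to $(\boldsymbol\lambda,\boldsymbol\xi,-1)$ — and (ii) transversality of $\mathrm{span}(Q_{d+1},\dots,Q_n)$ with the span of the $d$ within-block differences $\mathbf a-\mathbf a'$, obtained by the same block elimination (your ``vanishing block sums'') that you spell out; your step passing from independence of the augmented rows to invertibility of $\Aact$ is just the paper's factorization/rank bookkeeping. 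Where you genuinely diverge is the second assertion. The paper dismisses it with ``the same argument'' producing dependent augmented vectors, but with $d+1$ within-block differences the unaugmented dimension count $(n-d)+(d+1)=n+1>n$ no longer lets flag genericity force a trivial intersection, so the literal repetition of the first argument does not close; your route — the extra augmented row must lie in the span of the $s+n$ independent augmented active rows (the hyperplane $(\boldsymbol\lambda,\boldsymbol\xi,-1)^{\perp}$), Definition~\ref{def:generic} applied to the $s+d+1$ augmented Cayley rows forces a nonzero flag contribution, and one concludes by transversality inside $\{0\}^s\times\mathbb R^{n+1}$ between the $(n-d)$-dimensional span of the augmented flag rows $[0,Q_j,R^{n-j+1}r_j]$ and the $(d+1)$-dimensional augmented difference span — is the right way to make that sentence precise, and is more detailed than what the paper records. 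The price, as you note, is that this last transversality involves $R$, the $r_j$ and $b$ jointly; your remark that the bad set of $R$ is ``finite or all of $[0,\infty)$'' only yields the conclusion for all but finitely many $R$, not for every $R\ge0$ as the lemma is literally phrased. I would record this as a caveat rather than an error: the paper's own proof is silent on exactly this point, its part (i) genericity statements are $R$-free, and the lemma is only invoked for $R$ large, where finitely many exceptional values are harmless.
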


\begin{proof}
The matrix $\Aact$ is of the form
\[
\Aact = 
\left[
\begin{matrix}
\mathrm{e}_{i_1} & \mathbf a_1 \\
\vdots & \vdots \\
\mathrm{e}_{i_{d}} & \mathbf a_d \\
0 & Q_{d+1} \\
\vdots & \vdots \\
0 & Q_n 
\end{matrix}
\right]
\]

Because the flag $F_0 \subset \cdots \subset F_n$ is generic and
the $A_i$ are finite,
the span of $Q_{d+1}, \dots, Q_n$ is transversal to any
space spanned by exactly $d$ vectors of the form $a - a'$, where $a, a' \in \cup A_i$. 
After elimination and some row permutations, matrix $\Aact$ factors:
\[
\Aact = P L 
\left[
\begin{matrix}
-I & U_{12} \\
    &U_{22} \\
    & Q_{d+1} \\
& \vdots \\
& Q_n
\end{matrix}
\right].
\]
$P$ is a permutation. $L$ is lower triangular with $L_{ii}=1$.
The rows of $U_{12}$ are elements of $\cup A_i$, and the rows of $U_{22}$ are
of the form  $a - a'$, $a$, $a' \in A_i$ for the same $i$.

Thus,
\begin{eqnarray*}
\rank{\Aact} &=& 
\rank{
\left[
\begin{matrix}
\mathrm{e}_{i_1} & \mathbf a_1 \\
\vdots & \vdots \\
\mathrm{e}_{i_{d}} & \mathbf a_d \\
\end{matrix} 
\right]}
+ n - d
\\
&=&
\rank{
\left[
\begin{matrix}
\mathrm{e}_{i_1} & \mathbf a_1 & b(i_1, \mathbf a_1) \\
\vdots & \vdots \\
\mathrm{e}_{i_{d}} & \mathbf a_d & b(i_d, \mathbf a_d)\\
\end{matrix} 
\right]}
+ n - d.
\end{eqnarray*}

Should the matrix $\Aact$ be singular, 
there will be $d$ linearly dependent vectors
of the form $[\mathrm e_i, \mathbf a, b(i,\mathbf a] \in \mathbb R^{n+s+1}$
contradicting the genericity of the lifting $b$.

The same argument shows that should there be another active constraint
$(i, \mathbf a)$, there would be $d+1 \le n+s_1$ 
linearly dependent vectors of the form $[\mathrm e_i, \mathbf a, b(i,\mathbf a] \in \mathbb R^{n+s+1}$, contradiction again.
\end{proof}

The flag $F=F_0 \subset \cdots \subset F_{n}$ will be assumed to 
be an affine flag `at infinity'.
This means that the equations for $F_d$ are of the form 
\begin{eqnarray*}
Q_{d+1} \mathbf \xi &=& R^{n-d} r_{d+1} \\
&\vdots& \\
Q_{n} \mathbf \xi &=& R r_n
\end{eqnarray*}
with generic $Q_i$, non-zero $r_i$ and $R \rightarrow +\infty$.
\glossary{$Q_i$ & Unit vector orthogonal to $F_{i-1}$ in $F_i$.}
\glossary{$R$ & non-standard number, $R>k$ for all $k \in \mathbb R$.}
The values of $\mathbf \xi$ and $\mathbf \lambda(\mathbf \xi)$
are now polynomials in $R$.
\glossary{$B$ & Inverse to the matrix $\Aact$ of active constraints.}
Let $B = \Aact^{-1}$. The ansatz below is the key to represent 
those polynomials by their constant term:
\begin{equation}\label{ansatz}
\left[
\begin{matrix}
\lambda \\
\xi
\end{matrix}
\right]
=
\left[
\begin{matrix}
\lambda_0 \\
\xi_0
\end{matrix}
\right]
+
B
\left[ \begin{matrix} 0 \\ \vdots \\ 0 \\ R^{n-d} r_{d+1} \\ \vdots \\ R r_n\end{matrix} \right]
\hspace{1em}
\text{with}
\hspace{1em}
\Aact
\left[
\begin{matrix}
\lambda_0 \\
\xi_0
\end{matrix}
\right]
=
\bact = 
\left[
\begin{matrix}
b(i_1, a_1) \\
\vdots\\
b(i_d, a_d) \\
0\\
\vdots \\
0
\end{matrix}
\right]
\end{equation}

Only $\lambda_0$ and $\xi_0$ need to be stored in memory, the
rest of the polynomial is implicit once we know $\Aact$ and $B$.

Inactive constraints $(i,a)$ (for $R \rightarrow \infty$) satisfy
\begin{equation}\label{inactive}
\begin{split}
R^{n-d} r_{d+1} \left[ -\mathrm{e}_i, a \right] B \mathrm e_{s+d+1}
+
&\cdots
+
R r_{n} \left[ -\mathrm{e}_i, a \right] B \mathrm e_{n+s} + \\
&+
\left[ -\mathrm{e}_i, a \right] B
\left[
\begin{matrix}
\lambda_0 \\
\xi_0
\end{matrix}
\right]
- b(i,a)
<
0
\end{split}
\end{equation}

The expression above may be interpreted as a polynomial in $R$ or, as $R \rightarrow \infty$ as a
non-standard number. It is strictly negative if and only if the higher-order non-zero coefficient is strictly negative. The reader can check
that only the sign of the $r_j$'s matters:
\begin{lemma}
Assume that $r_j > 0$ for all $j$. If \eqref{inactive} is strictly negative for $R$ large enough,
then for any other choice of the $r_j > 0$ it will remain negative for $R$ large enough.
\end{lemma}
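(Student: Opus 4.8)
The plan is to analyze the expression in \eqref{inactive} as a polynomial in $R$ and show that its sign, for $R$ large, depends only on the signs of the $r_j$'s. Write the left-hand side of \eqref{inactive} as
\[
p(R) = \sum_{k=1}^{n-d} R^{k}\, r_{d+k}\, c_k + c_0,
\hspace{2em}
c_k = \left[ -\mathrm e_i, a \right] B \mathrm e_{s+d+k}
\ \ (1 \le k \le n-d),
\]
\[
c_0 = \left[ -\mathrm e_i, a \right] B \left[\begin{matrix} \lambda_0 \\ \xi_0 \end{matrix}\right] - b(i,a).
\]
The key observation is that the coefficients $c_1, \dots, c_{n-d}, c_0$ do \emph{not} depend on the choice of the $r_j > 0$: they are determined by $\Aact$, $B = \Aact^{-1}$, the inactive constraint $(i,a)$, and $\lambda_0, \xi_0$, and by \eqref{ansatz} the pair $(\lambda_0, \xi_0)$ is itself the solution of a linear system whose right-hand side $\bact$ has zeros in the last $n-d$ positions, hence is independent of the $r_j$. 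So the \emph{only} way the $r_j$'s enter $p(R)$ is as the explicitly written positive multipliers of $c_1, \dots, c_{n-d}$.

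Next I would invoke the elementary fact that a real polynomial $p(R) = \sum_{k=0}^{N} \alpha_k R^k$ is strictly negative for all sufficiently large $R$ if and only if its leading nonzero coefficient (the $\alpha_k$ with largest $k$ among the nonzero ones) is strictly negative; if all $\alpha_k$ vanish then $p \equiv 0$ is not strictly negative. Apply this with $\alpha_k = r_{d+k} c_k$ for $1 \le k \le n-d$ and $\alpha_0 = c_0$. Since each $r_{d+k} > 0$, the sign of $\alpha_k$ equals the sign of $c_k$ for $k \ge 1$, and $\alpha_0 = c_0$ regardless. Therefore the index $k^\ast$ of the leading nonzero coefficient, and the sign of that coefficient, are the same for every choice of $r_j > 0$: $k^\ast$ is the largest $k \in \{1, \dots, n-d\}$ with $c_k \ne 0$, or $0$ if all those vanish, and the leading sign is $\mathrm{sign}(c_{k^\ast})$ in the first case and $\mathrm{sign}(c_0)$ in the second. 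Hence ``$p(R) < 0$ for $R$ large'' is a condition on the $c_k$'s alone, independent of the particular positive values $r_j$.

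Putting the two steps together: if \eqref{inactive} is strictly negative for $R$ large enough for one admissible choice of the $r_j > 0$, then by the first paragraph the coefficients $c_k$ are unchanged when we pass to any other choice $r_j' > 0$, and by the second paragraph the leading nonzero coefficient of the resulting polynomial still has the same (negative) sign, so \eqref{inactive} remains strictly negative for $R$ large. This proves the Lemma. I do not anticipate a serious obstacle; the only point requiring a little care is the claim that $\lambda_0, \xi_0$ do not depend on the $r_j$, which is immediate from the second equation in \eqref{ansatz} together with the invertibility of $\Aact$ established in Lemma~\ref{lem:genericity} — the right-hand side $\bact$ there has its last $n-d$ entries equal to zero, so the solution is a fixed vector once $\Aact$ is fixed.
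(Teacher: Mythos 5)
Your proof is correct and follows exactly the argument the paper has in mind (the paper leaves this verification to the reader, with the hint that the expression is a polynomial in $R$ whose sign for large $R$ is that of its leading nonzero coefficient, and each such coefficient carries an $r_j$ only as a positive multiplier of an $r$-independent quantity). One harmless slip: in \eqref{inactive} the power $R^l$ is paired with $r_{n+1-l}$ and $\left[-\mathrm e_i, a\right] B \mathrm e_{n+s+1-l}$ rather than with $r_{d+l}$ and $\left[-\mathrm e_i, a\right] B \mathrm e_{s+d+l}$ as you wrote, but since your argument never uses which particular $r_j$ multiplies which coefficient, only that it is positive and that the remaining factor and the constant term are independent of the $r_j$'s, the proof stands as written.
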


Now we consider the effect of `dropping' the $j$-th constraint. 
As before, we set 
\[
\left[
\begin{matrix}
\Delta_j \lambda \\
\Delta_j \xi
\end{matrix}
\right]
=
-B \mathrm{e}_j
.
\]

By Lemma~\ref{lem:balancing},
the corresponding edge is of the form
\[
\{
t 
\Delta_j \xi, \ 
t \in (0, I_j)
\}
\]
where $I_j$ can be determined as in Lemma~\ref{lem:Ij}. 
We will need the polynomial
\[
t(i,a) (R) = \sum_{l=0}^{n-d} t_l(i,a) R^{l}
\glossary{$t_{i,a}(R)$ & Scores for inactive constraints.}
\]
where
\[
t_0(i,\mathbf a) = \frac{ a (\xi_0)_i - b(i,a) - (\lambda_0)_i}
{ [-\mathrm e_i,\mathbf a]  B \mathrm e_j }
\]
and for $l=1, \dots, n-d$,
\[
t_l(i,\mathbf a) = \frac{ [-\mathrm e_i,\mathbf a]  B \mathrm e_{n+s+1-l}} 
{ [-\mathrm e_i,\mathbf a]  B \mathrm e_j }
r_{n+1-l} 
.
\]

Let $\mathcal C$ be the set of inactive constraints with $t(i,a)(R) > 0$ once $R$ is large enough.
There may be inactive constraints with $[ -\mathrm e_i, \mathbf a]  B \mathrm e_j = 0$ but 
those are not eligible as elements of $C$. Assuming $C$ not empty,
$\argmin_C t(i,a)(R)$ denotes the constraint $(i,a) \in C$ that is minimal, once $R$ is large enough.
\medskip
\par

\begin{lemma}\label{lem:Ij1}
Let $\xi \in \revc{G}_d$ be a vertex. Assume all the notations above. If $C$ is not empty,
Let $(i^*, a^*) = \argmin_{(i,a) \in C} t(i,a)(R)$, $t^*=t(i^*, a^*)=\sum_{l=0}^{n-d}t_l^* R^l$.  
Then, 
\begin{enumerate}[(a)]
\item \label{Ij1a} $(i^*,a^*)$ is uniquely defined.
\item \label{Ij1b} If $R$ is large enough, $[\xi, \xi']$ is a segment of $\revc{G}_d$, where 
$\xi'= \xi + t^* \Delta \xi$. 
\item \label{Ij1c} Let $\lambda'= \lambda + t^* \Delta \lambda$. 
Let $B'$ be the inverse to the matrix of active constraints at $\mathbf \xi'$. 
Then
\[
\left[
\begin{matrix}
\lambda' \\
\xi'
\end{matrix}
\right]
=
\left[
\begin{matrix}
\lambda_0' \\
\xi_0'
\end{matrix}
\right]
+
B'
\left[ \begin{matrix} 0 \\ \vdots \\ 0 \\ R^{n-d} r_{d+1} \\ \vdots \\ R r_n\end{matrix} \right]
\]
with
\[
\left[ \begin{matrix} \lambda_0' \\ \xi_0' \end{matrix} \right]
=
\left[ \begin{matrix} \lambda_0 \\ \xi_0 \end{matrix} \right]
-
t_0^* B \mathrm{e_j}
.
\]
\end{enumerate}
\end{lemma}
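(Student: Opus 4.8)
The plan is to prove the three items in order, each being essentially a bookkeeping consequence of Lemmas~\ref{lem:balancing}, \ref{lem:Ij} and \ref{lem:genericity} together with the rank-one update formula of Lemma~\ref{rank_one_updates}. For part~(\ref{Ij1a}), I would argue that if two distinct inactive constraints $(i^*,a^*)$ and $(i^{**},a^{**})$ achieved the minimum of $t(i,a)(R)$ for all large $R$, then the two polynomials $t(i^*,a^*)(R)$ and $t(i^{**},a^{**})(R)$ would coincide identically (two polynomials of bounded degree agreeing for infinitely many values are equal). Clearing denominators, this forces a linear dependence among the vectors $[-\mathrm e_{i^*},a^*]$, $[-\mathrm e_{i^{**}},a^{**}]$ and the rows of $\Aact$ when extended by the lifting values $b(\cdot,\cdot)$ and the generic data $r_j$; since the flag is generic and $b$ is in general position, Lemma~\ref{lem:genericity} (or rather the genericity hypothesis it rests on) rules this out. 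So the $\argmin$ is unique once $R$ is large enough.

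For part~(\ref{Ij1b}), the point is simply to identify $t^*$ with the quantity $I_j$ of Lemma~\ref{lem:Ij}, now read as a non-standard number / polynomial in $R$. By Lemma~\ref{lem:Ij}, $I_j$ is the minimum over inactive constraints with positive score $t(i,\mathbf a)>0$, and by construction $\mathcal C$ is exactly the set of inactive constraints whose score polynomial is eventually positive; comparing non-standard numbers is done by comparing leading coefficients, which is exactly what $\argmin_{\mathcal C} t(i,a)(R)$ computes for $R$ large. Hence $I_j=t^*$ and Lemma~\ref{lem:balancing} then says the edge incident to $\boldsymbol\xi$ in direction $\Delta_j\boldsymbol\xi$ is the open segment from $\boldsymbol\xi$ to $\boldsymbol\xi+t^*\Delta_j\boldsymbol\xi=\boldsymbol\xi'$, so $[\boldsymbol\xi,\boldsymbol\xi']$ is a (closed) edge of $X_d'$.

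For part~(\ref{Ij1c}), the active-constraint set at $\boldsymbol\xi'$ is obtained from that at $\boldsymbol\xi$ by dropping the $j$-th row and inserting the row $[-\mathrm e_{i^*},a^*]$ — by part~(\ref{Ij1a}) exactly one constraint becomes newly active, and by Lemma~\ref{lem:genericity} no further constraint becomes active, so the new matrix of active constraints $\Aact'$ differs from $\Aact$ by a rank-one modification $\Aact' = \Aact - \mathrm e_j\, w^T$ for an appropriate row-difference $w$. Lemma~\ref{rank_one_updates} then gives $B' = \Aacts'^{-1}$ explicitly. The only substantive computation is the formula for the constant term: one writes the ansatz \eqref{ansatz} at $\boldsymbol\xi'$, uses $\Aact'[\lambda_0';\xi_0']=\bacts'$ where $\bacts'$ agrees with $\bact$ except that the $j$-th entry $b(i_j,a_j)$ is replaced by $b(i^*,a^*)$, and subtracts the corresponding identity at $\boldsymbol\xi$. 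The difference of right-hand sides is a multiple of $\mathrm e_j$ plus a multiple of the newly inserted row; tracking the $R^0$ part and using $[\Delta_j\lambda;\Delta_j\xi]=-B\mathrm e_j$ together with the definition of $t_0^*$ collapses everything to $[\lambda_0';\xi_0']=[\lambda_0;\xi_0]-t_0^*B\mathrm e_j$, as claimed.

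The main obstacle I anticipate is part~(\ref{Ij1c}): correctly identifying which row of $\bact$ changes and verifying that the rank-one update is consistent with the ansatz when both sides carry the implicit $R$-polynomial tail — one must check that the higher-order terms in $R$ match automatically (they do, because the new column data $r_{d+1},\dots,r_n$ is unchanged and $B'$ already absorbs them), so that only the constant term needs adjusting. Parts~(\ref{Ij1a}) and~(\ref{Ij1b}) are routine given the genericity hypotheses already established.
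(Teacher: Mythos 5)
Your proposal is correct and takes essentially the same route as the paper: part~(\ref{Ij1b}) is Lemmas~\ref{lem:balancing} and~\ref{lem:Ij} transported to the $R\to\infty$ setting (the paper re-verifies this explicitly by checking each class of constraint along the segment rather than citing those lemmas), and part~(\ref{Ij1c}) is the same rank-one update computation with $\Aact'=\Aact-\mathrm e_j v$. The only organizational difference is that the paper proves (\ref{Ij1b}) first and then gets (\ref{Ij1a}) immediately — a tie in the $\argmin$ would produce $s+d+1$ active constraints at $\xi'$, contradicting Lemma~\ref{lem:genericity} — which is the cleaner form of your polynomial-identity/linear-dependence argument and lands on the same genericity fact.
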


\begin{proof}
We prove item (\ref{Ij1b}) before unicity. Let $0 < \tau < t^*$.
For active constraints $(k, a_k)$ except 
the $j$-th one (that we `dropped'),
$
-\tau [-e_k , a_k] B e_j = -\tau \delta_{jk}=0$ so that
\[
a_k (\xi + \tau \Delta_j \xi ) - b(k, a) = \lambda_k + \tau \Delta_j \lambda_k
.
\]
We claim that all the other constraints $(k, a')$ satisfy
\[
a' (\xi + \tau \Delta_j \xi ) - b(k, a') < \lambda_k + \tau \Delta_j \lambda_k
.
\]

For the $j$-th constraint, this follows from
$-\tau [-e_j , a_j] B e_j = -\tau < 0$. Similarly, if $(k, a')$ is an inactive
constraint not in $C$,
either $-\tau [-e_k , a'] B e_j = 0$ or $[-e_k , a'] B e_j$ and $a_k \xi - b(k, a^*) - \lambda_k$
have different sign. Since the latter is negative, $[-e_k , a'] B e_j > 0$ and
$-\tau [-e_k , a'] B e_j \le 0$.

Now assume $(k, a') \in C$: since the numerators of $t(i^*,a^*) \le t(k, a_k)$ are all negative,
we have the inequality $[e_{i^*}, a^*] B e_j \ge \tau[-e_k, a_k] B e_k$.

\begin{equation*}
\begin{split}
a_k (\xi + \tau \Delta_j \xi ) - b(k, a) - \lambda_k + \tau \Delta_j \lambda_k
&=
a_k \xi - b(k, a) - \lambda_k -\tau [e_k, a] B e_j\\
&=
(t(k,a) - \tau)  [e_k, a] B e_j\\
&\le
(t(k,a) - \tau)  [e_{i^*}, a^*] B e_j\\
&\le 
0\\
\end{split}
\end{equation*}
with equality if $(k,a)=(i^*,a^*)$.
\medskip
\par
We can prove item~\eqref{Ij1a} now. 
Should unicity fail, there will be $n+d+1$ active constraints for $\xi + t^* \Delta_j \xi$,
which contradicts Lemma~\ref{lem:genericity}. Therefore, the minimum of $w$ is attained in a unique
$(i^*, a^*) \in C$.

\medskip
\par
Let $\Aact'$ be the matrix of active constraints for $\xi'$ and let $B'$ be its inverse.
In order to prove item~\eqref{Ij1c}, we will first check that for $1 \le l < n-d$, 
\[
B' \mathrm e_{n+s+1-l} r_{n-l+1}
=
B \mathrm e_{n+s+1-l} r_{n-l+1}
-
t_l^* B \mathrm{e_j}.
\]
Notice that $\Aact' = \Aact - \mathrm e_j v$, with $v=[-\mathrm e_j, a_j] - [-\mathrm e_{i^*}, a^*]$. Also, let
$\bact'= \bact + \mathrm e_j (b(i^*,a^*)-b(j,a_j))$.
By the previous item and by construction,
\[
\Aact' 
\left[ \begin{matrix}  \lambda + t \Delta_j \lambda
\\ \xi +t \Delta_j \xi \end{matrix} \right]
=
\bact'
\]

The invertibility of $\Aact'$ follows from Lemma~\ref{lem:genericity}.
Now we apply Lemma~\ref{rank_one_updates} to obtain an expression for $B'$:
\[
B'= B + \frac{1}{1- v^T B e_j} B e_j v^T B
.
\] 
Note that $v^T B = e_j^T - [-\mathrm e_{i^*} a^* ] B$. Thus,
for each $1 \le l \le n-d$,
\begin{eqnarray*}
r_{n+s-l} B' e_{n+s+1-l} &=&
r_{n+s-l} B e_{n+s+1-l}\\
&& + r_{n+s-l} \frac{[-\mathrm e_j, a_j]B e_{n+s+1-l} - [-\mathrm e_{i^*}, a^*]B e_{n+s+1-l}}{[-\mathrm e_{i^*} a^* ] Be_j} B e_j 
\\
&=&
r_{n+s-l} B e_{n+s+1-l} 
- t_{l}^* B e_j 
\end{eqnarray*}

Hence,
\[
B'
\left[ \begin{matrix} 0 \\ \vdots \\ 0 \\ R^{n-d} r_{d+1} \\ \vdots \\ R r_n\end{matrix} \right]
=
B
\left[ \begin{matrix} 0 \\ \vdots \\ 0 \\ R^{n-d} r_{d+1} \\ \vdots \\ R r_n\end{matrix} \right]
- \sum_{l=1}^{n-d} R^{l} t_{l} B e_j 
\]
Thus,
\begin{eqnarray*}
\left[ \begin{matrix} \lambda' \\ \xi' \end{matrix} \right]
&=&
\left[ \begin{matrix} \lambda \\ \xi \end{matrix} \right]
-
t^* B e_j
\\
&=&
\left[ \begin{matrix} \lambda_0 \\ \xi_0 \end{matrix} \right]
+
B
\left[ \begin{matrix} 0 \\ \vdots \\ 0 \\ R^{n-d} r_{d+1} \\ \vdots \\ R r_n\end{matrix} \right]
-
t^* B e_j
\\
&=&
\left[ \begin{matrix} \lambda_0 \\ \xi_0 \end{matrix} \right]
-
t_0^* B e_j
\end{eqnarray*}

\end{proof}

Lemma \ref{lem:Ij1} allows us to explore each of the sets $\revc{G}_d$, $1 \le d \le n$ and hence to
produce $X_d$, as long as we have at least one vertex from each connected component of $\revc{G}_d$.
Those vertices can be recovered from $X_{d-1}$ by using 
Theorem~\ref{th:connected} and the lemma below.

The following Lemma allows to find starting points in $\revc{G}_{d+1}$ by exploring $X_{d}$.
In order to do that, we `drop' the $s+d+1$-th constraint. Let $w$ and $C$ be defined
accordingly:
\[
t(i,a) (R) = \sum_{l=0}^{n-d} t_l(i,a) R^{l}
\]
where
\[
t_0(i,\mathbf a) = \frac{ a (\xi_0)_i - b(i,a) - (\lambda_0)_i}
{ [-\mathrm e_i,\mathbf a]  B \mathrm e_{s+d+1} }
\]
and for $l=1, \dots, n-d$,
\[
t_l(i,\mathbf a) = \frac{ [-\mathrm e_i,\mathbf a]  B \mathrm e_{n+s+1-l}} 
{ [-\mathrm e_i,\mathbf a]  B \mathrm e_{s+d+1} }
r_{n+1-l} 
.
\]

Let $\mathcal C$ be the set of inactive constraints with $t(i,a)(R) \ne 0$ once $R$ is large enough.
It is important to notice that for $R$
large, item (d) of Theorem~\ref{th:connected} reads:
\begin{enumerate}
\item[(d')] All points in $\revc{G}_d \cap F_{d-1}$ are in the interior of a half line of $\revc{G}_d$.
\end{enumerate}

Therefore, all constraints in $\mathcal C$ have value of the same sign (positive or negative).

\begin{lemma}\label{lem:Ij2}
With the notations of Lemma~\ref{lem:Ij1}, let $j=s+d+1$.
Let $\xi \in X_{d}$ be a vertex. If $C$ is not empty,
Let $(i^*, a^*) = \argmin_{(i,a) \in C} |t(i,a)(R)|$, $t^*=t(i^*, a^*)=\sum_{l=0}^{n-d}t_l^* R^l$.  
Then, 
\begin{enumerate}[(a)]
\item \label{Ij2a} $(i^*,a^*)$ is uniquely defined.
\item \label{Ij2b} If $R$ is large enough, $[\xi, \xi')$ is a half line of $\revc{G}_d$, where 
$\xi'= \xi + t^* \Delta \xi$. Moreover, $\xi'$ is a point of $\revc{G}_{d+1}$.
\item \label{Ij2c} Let $\lambda'= \lambda + t^* \Delta \lambda$.
Let $B'$ be the inverse to the matrix of active constraints at $\mathbf \xi'$. 
Then
\[
\left[
\begin{matrix}
\lambda' \\
\xi'
\end{matrix}
\right]
=
\left[
\begin{matrix}
\lambda_0' \\
\xi_0'
\end{matrix}
\right]
+
B'
\left[ \begin{matrix} 0 \\ \vdots \\ 0 \\ R^{n-d-1} r_{d+2} \\ \vdots \\ R r_n\end{matrix} \right]
\]
with
\[
\left[ \begin{matrix} \lambda_0' \\ \xi_0' \end{matrix} \right]
=
\left[ \begin{matrix} \lambda_0 \\ \xi_0 \end{matrix} \right]
-
t_0^* B \mathrm{e_j}
.
\]
\end{enumerate}
\end{lemma}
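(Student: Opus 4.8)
The plan is to follow essentially the same line of argument as in the proof of Lemma~\ref{lem:Ij1}, with the single structural change that the constraint being dropped is the $(s+d+1)$-th row of $\Aact$, which is $[0, Q_{d+1}]$ rather than a genuine lifting constraint $[-\mathrm e_i, \mathbf a]$. Dropping this row means we stop enforcing $Q_{d+1}\boldsymbol\xi = R^{n-d}r_{d+1}$, so the pivoting direction $\Delta\boldsymbol\xi = -B\mathrm e_{s+d+1}$ is exactly a direction that moves $\boldsymbol\xi$ off $F_d$ toward $F_{d+1}$; and since we are at a vertex of $X_d$ (so $\sum m_i(\boldsymbol\xi) = d$) and we pick up one more active lifting constraint at $\boldsymbol\xi'$, the new point satisfies $\sum m_i(\boldsymbol\xi') = d+1$ while still lying in $F_{d+1}$, i.e.\ it is a point of $X_{d+1}'$. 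This is the content of item (\ref{Ij2b}).

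First I would verify item (\ref{Ij2b}). By Lemma~\ref{lem:balancing} applied at the vertex $\boldsymbol\xi$ of $X_d$ (here the relevant block index $q$ is the one with $m_q(d) = m_q(d-1)+1$, and $j = s+d+1$ is not among the $J_q$ dropped there — but the same rank-one bookkeeping applies to dropping \emph{any} single active constraint), dropping constraint $j$ produces an edge $\{\boldsymbol\xi + t\Delta_j\boldsymbol\xi : t \in (0,I_j)\}$ of the tropical object one gets from $F_{d+1} \cap$ (the codimension-$d$ surface), i.e.\ a piece of $X_{d+1}'$ lying in $F_d$. By Theorem~\ref{th:connected}(d$'$) — transversality at infinity — the point where such an edge meets $F_{d-1}$... more precisely, the point $\boldsymbol\xi$ of $X_d = X_{d+1}' \cap F_d$ lies in the interior of a half-line of $X_{d+1}'$, so moving in the direction $\Delta_j\boldsymbol\xi$ away from $F_d$ we stay on a half-line of $X_{d+1}'$ until we hit the first inactive lifting constraint, which happens at $t^* = t(i^*,a^*)(R)$; at that $t^*$ a new lifting constraint becomes active, so $\sum m_i$ jumps to $d+1$ and $\boldsymbol\xi'$ is a genuine vertex of $X_{d+1}'$. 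The fact that $[\xi,\xi')$ is a \emph{half}-line (not a bounded segment) rather than merely an edge is where we use that the sign pattern of the $t(i,a)(R)$ for $(i,a)\in\mathcal C$ is constant, forced by (d$'$): all candidate constraints lie on one side, so as $t\to -\infty$ along $-\Delta_j\boldsymbol\xi$ nothing obstructs.

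Next I would prove item (\ref{Ij2a}), unicity of the minimizer. This is verbatim the argument at the end of the proof of Lemma~\ref{lem:Ij1}: if two inactive constraints attained the minimum $|t^*|$ simultaneously, then at $\boldsymbol\xi'$ there would be $s + (d+1) + 1 = s+d+2$ active constraints (the original $s+d$ minus the dropped one, plus two new ones), exceeding the $s+d+1$ allowed by Lemma~\ref{lem:genericity} for a point of $X_{d+1}'$ — contradiction. Note the bookkeeping: we need the hypothesis that $b$ is in general position and the flag generic, already standing assumptions. One should be slightly careful that Lemma~\ref{lem:genericity} is stated for matrices of the form \eqref{explicit-Aact}; here the new matrix $\Aact'$ has $d+1$ lifting rows and $n-d-1$ flag rows $Q_{d+2},\dots,Q_n$, which is exactly that form with $d$ replaced by $d+1$, so the lemma applies directly.

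Finally, item (\ref{Ij2c}) is a rank-one update computation identical in form to Lemma~\ref{lem:Ij1}(\ref{Ij1c}). We have $\Aact' = \Aact - \mathrm e_{s+d+1} v$ with $v = [0, Q_{d+1}] - [-\mathrm e_{i^*}, a^*]$, and $\bact' = \bact + \mathrm e_{s+d+1}(b(i^*,a^*) - R^{n-d}r_{d+1})$; applying Lemma~\ref{rank_one_updates} gives $B' = B + \frac{1}{1 - v^T B\mathrm e_{s+d+1}} B\mathrm e_{s+d+1} v^T B$, and since $v^T B = \mathrm e_{s+d+1}^T - [-\mathrm e_{i^*}, a^*]B$ one checks, exactly as before, that $r_{n+1-l}B'\mathrm e_{n+s+1-l} = r_{n+1-l}B\mathrm e_{n+s+1-l} - t_l^* B\mathrm e_{s+d+1}$ for each $1\le l \le n-d-1$, and that the $R^{n-d}$-term of the ansatz (the $r_{d+1}$ term) cancels because we have dropped precisely the $Q_{d+1}$-constraint — this is why the displayed vector in (\ref{Ij2c}) starts at $R^{n-d-1}r_{d+2}$ rather than $R^{n-d}r_{d+1}$. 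Collecting the telescoping sum then yields $[\lambda_0';\xi_0'] = [\lambda_0;\xi_0] - t_0^* B\mathrm e_j$, as claimed.

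I expect the main obstacle to be the careful justification of item (\ref{Ij2b}) — specifically, arguing rigorously that the pivot lands on a \emph{half-line} of $X_{d+1}'$ and that $\boldsymbol\xi'$ genuinely lies in $F_{d+1}$ (i.e.\ that after dropping $Q_{d+1}$ and travelling distance $t^*$, the remaining flag constraints $Q_{d+2}\boldsymbol\xi' = R^{n-d-1}r_{d+2}, \dots$ are still satisfied, which they are because $\Delta_j\boldsymbol\xi = -B\mathrm e_{s+d+1}$ is $Q_k$-orthogonal for $k > d+1$ by the structure of $B = \Aact^{-1}$, so those constraints are untouched along the pivot). The interplay between the "at infinity" reformulation (d$'$) and the finite-$R$ geometry, and making sure the sign argument is airtight, is the delicate point; the rank-one update in (\ref{Ij2c}) is purely mechanical once (\ref{Ij2a}) and (\ref{Ij2b}) are in place.
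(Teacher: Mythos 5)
Your proposal is correct and follows essentially the same route as the paper: the paper proves only item (c), by exactly the rank-one update you describe, with the cancellation of the top-degree term made precise by the identity $t^*_{n-d}=r_{d+1}$ (the numerator and denominator of $t_{n-d}(i,a)$ coincide because the dropped row is the $Q_{d+1}$ row, i.e.\ your ``dropped precisely the $Q_{d+1}$-constraint'' observation), while items (a) and (b) are dismissed as ``similar to the proof of Lemma~\ref{lem:Ij1}''. Your first two paragraphs simply carry out that omitted adaptation (uniqueness via Lemma~\ref{lem:genericity}, preservation of the $Q_k$-constraints for $k>d+1$ since $[0\,Q_k]B\mathrm e_{s+d+1}=0$, and the sign argument via (d$'$)), which is consistent with the paper's intent.
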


\begin{proof}
The proof of items \eqref{Ij2a} and \eqref{Ij2b} is similar to the proof of Lemma~\eqref{lem:Ij1}. Therefore
we will only prove item \eqref{Ij2c}.

Let $\Aact'$ be the matrix of active constraints for $\xi'$ and let $B'$ be its inverse.
As before, $\Aact' = \Aact - \mathrm e_j v$ where $v=[0, Q_{d+1}] - [-\mathrm e_{i^*}, a^*]$ is the difference between the $j$-th row of
$\Aact$ and the $j$-th row of $\Aact'$, and
$\bact'= \mathrm e_{d+1} b(i^*,a^*)$. We have
\[
\Aact' 
\left[ \begin{matrix}  \lambda + t \Delta_j \lambda
\\ \xi +t \Delta_j \xi \end{matrix} \right]
=
\bact'
\]

The invertibility of $\Aact'$ follows from Lemma~\ref{lem:genericity}, and
\[
B'= B + \frac{1}{1- v^T B e_{s+d+1}} B e_{s+d+1} v^T B
\] 
with $v^T B = e_{s+d+1}^T - [-\mathrm e_{i^*} a^* ] B$.
As in Lemma~\ref{lem:Ij1} item~\eqref{Ij1c}, for $1 \le l < n-d$, 
\[
B' \mathrm e_{n+s-l+1} r_{n-l+1}
=
B \mathrm e_{n+s-l+1} r_{n-l+1}
-
t_l^* B \mathrm e_{s+d+1}.
\]

When $l=n-d$, $t_{l}^* = r_{d+1}$. 
Therefore $R^{n-d} r_{d+1} B e_{s+d+1} - R^{n-d} t_{n-d} B e_{s+d+1}=0$ and
\begin{eqnarray*}
\left[ \begin{matrix} \lambda' \\ \xi' \end{matrix} \right]
&=&
\left[ \begin{matrix} \lambda \\ \xi \end{matrix} \right]
-
t^* B e_j
\\
&=&
\left[ \begin{matrix} \lambda_0 \\ \xi_0 \end{matrix} \right]
+
B
\left[ \begin{matrix} 0 \\ \vdots \\ 0 \\ R^{n-d} r_{d+1} \\ \vdots \\ R r_n\end{matrix} \right]
-
t^* B e_j
\\
&=&
\left[ \begin{matrix} \lambda_0 \\ \xi_0 \end{matrix} \right]
-
t_0^* B e_j
\end{eqnarray*}

\end{proof}

There is a reciprocal to Lemma~\ref{lem:Ij2}.

\begin{lemma}\label{lem:Ij3}
Let $\xi'$ be a vertex from $\revc{G}_{d+1}$.
Let $q$ be the unique integer such that $m_q(\xi') > \mm{q}{d}$. 
Then there is $\xi \in X_{d}$ such that $[\xi, \xi')$ is a half line
of $\revc{G}_d$ if and only if the following conditions hold for some active constraint
$(q,a)$ of $\xi'$ (say the $j$-th):
\begin{enumerate}[(a)]
\item The set $\mathcal C$ corresponding to `dropping' constraint $j$ is empty.
\item $[0\, Q_{d+1}]B'e_j < 0$.
\end{enumerate}
In that case, with the notations of Lemma~\ref{lem:Ij2},
\[
\left[
\begin{matrix}
\lambda\\
\xi
\end{matrix}
\right]
=
\left[
\begin{matrix}
\lambda_0\\
\xi_0
\end{matrix}
\right]
+
B
\left[ \begin{matrix} 0 \\ \vdots \\ 0 \\ R^{n-d} r_{d+1} \\ \vdots \\ R r_n\end{matrix} \right]
\]
with
\[
\left[
\begin{matrix}
\lambda_0\\
\xi_0
\end{matrix}
\right]
=
\left[
\begin{matrix}
\lambda'_0\\
\xi'_0
\end{matrix}
\right]
+
\frac{r_{d+1}}{[0, Q_{d+1}B'e_j]}
B'
e_j
.
\]
\end{lemma}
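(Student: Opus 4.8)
The plan is to reverse the pivoting process of Lemma~\ref{lem:Ij2}. Recall that in that lemma we start from a vertex $\xi \in X_d$ and, by dropping the $(s+d+1)$-st active constraint (the one coming from the hyperplane $Q_{d+1}$), we move along a half-line of $X_d'$ whose other endpoint ``at infinity'' lands inside a lower facet of one more dimension, giving a vertex $\xi' \in X'_{d+1}$. Here I am given $\xi' \in X'_{d+1}$ and must decide when it arises this way, and recover $\xi$. First I would describe the active constraints of $\xi'$: since $\xi' \in X'_{d+1}$, there are $s+d+1$ active constraints of support type, plus $n-d-1$ flag constraints $Q_{d+2},\dots,Q_n$, and the block index $q$ with $m_q(\xi')>m_q(d)$ is the one carrying the extra support constraint. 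To undo the pivot, I choose one active support constraint $(q,a)$ of the $q$-th block, say the $j$-th row, and ``drop'' it while simultaneously ``adding'' the flag constraint $[0,Q_{d+1}]$ in its place; this produces a candidate matrix of active constraints $\Aact$ for a vertex $\xi \in F_d$.

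The key computation is the rank-one update. Exactly as in Lemma~\ref{lem:Ij2}, $\Aact = \Aact' - \mathrm e_j v$ with $v = [-\mathrm e_{i^*},a^*] - [0,Q_{d+1}]$ (note the reversed sign of $v$ relative to Lemma~\ref{lem:Ij2}, since we are now going the other way). By Lemma~\ref{lem:genericity}, $\Aact$ is invertible (the genericity of $b$ and of the flag forbids $d+1\le n+s+1$ dependent lifted vectors), so Lemma~\ref{rank_one_updates} applies and gives
\[
B = B' + \frac{1}{1 - v^T B' \mathrm e_j} B' \mathrm e_j v^T B',
\]
where one checks $v^T B' = [-\mathrm e_{i^*},a^*]B' - e_j^T$ — the sign again opposite to Lemma~\ref{lem:Ij2}. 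The scalar $1 - v^T B'\mathrm e_j = 1 - ([-\mathrm e_{i^*},a^*]B'\mathrm e_j - 1) = 2 - [-\mathrm e_{i^*},a^*]B'\mathrm e_j$; wait — more carefully, since $[-\mathrm e_{i^*},a^*]$ is the $j$-th row of $\Aact'$, we have $[-\mathrm e_{i^*},a^*]B'\mathrm e_j = 1$, so $1 - v^T B'\mathrm e_j = 1 - (1 - [0,Q_{d+1}]B'\mathrm e_j) = [0,Q_{d+1}]B'\mathrm e_j$, and condition (b) $[0,Q_{d+1}]B'\mathrm e_j < 0$ is exactly the statement that this denominator is non-zero (and of the right sign to place $\xi$ on the correct side, i.e.\ so that $[\xi,\xi')$ is a half-line rather than $\xi$ lying beyond $\xi'$). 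Then I would solve $\Aact[\lambda_0;\xi_0]^T = \bact$ with $\bact$ obtained from $\bact'$ by replacing the $j$-th entry $b(q,a)$ by the new flag entry $r_{d+1}$ (the leading-order coefficient, the rest being implicit in the ansatz~\eqref{ansatz}); expanding $[\lambda_0;\xi_0]^T - [\lambda_0';\xi_0']^T = (B - B')\bact' + B(\bact - \bact')$ and simplifying with the formula for $B$ above collapses everything to the stated single rank-one correction $\frac{r_{d+1}}{[0,Q_{d+1}]B'\mathrm e_j} B'\mathrm e_j$, because the terms of $\bact'$ other than the $j$-th are annihilated against $v^T$ up to the bookkeeping that mirrors Lemma~\ref{lem:Ij1}(c).

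For the ``if and only if'': the forward direction (if such $\xi$ exists then (a) and (b) hold) is essentially the content of Lemma~\ref{lem:Ij2} read backwards — condition (a), that $\mathcal C$ is empty when dropping the dual constraint $j$, says precisely that moving from $\xi$ toward $\xi'$ never hits another inactive constraint before reaching infinity, i.e.\ the segment $[\xi,\xi')$ is genuinely a half-line of $X_d'$ ending in $X'_{d+1}$; condition (b) records the sign (equivalently, that $\xi$ lies in $F_d$ on the finite side, consistent with $R\to+\infty$ and $r_{d+1}>0$, using the transversality refinement (d') noted just before Lemma~\ref{lem:Ij2}). For the converse, given (a) and (b) I define $\xi$ by the displayed formula, verify via the rank-one update that the proposed $\Aact$ is indeed its matrix of active constraints, use (a) to rule out extra active constraints so that $\xi \in X_d$ genuinely (invoking Lemma~\ref{lem:genericity} for non-degeneracy), and use (b) plus Lemma~\ref{lem:objective}/the transversality argument from Theorem~\ref{th:connected}(d) to confirm $[\xi,\xi')$ is a half-line of $X_d'$ rather than a segment or a ray pointing the wrong way. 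The main obstacle I anticipate is precisely the sign/denominator bookkeeping: tracking which row is being added versus dropped, getting $v$ and $v^T B'$ with the correct signs, confirming that $[0,Q_{d+1}]B'\mathrm e_j$ is exactly the rank-one denominator, and checking that condition (b)'s inequality — not merely non-vanishing — is what distinguishes ``$\xi$ exists with $[\xi,\xi')$ a half-line of $X_d'$'' from the degenerate alternatives; everything else is a routine specialization of Lemmas~\ref{rank_one_updates}, \ref{lem:genericity}, and \ref{lem:Ij2}.
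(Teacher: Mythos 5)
Your overall strategy is the right one: undo the pivot of Lemma~\ref{lem:Ij2} by a rank-one exchange of the row $[-\mathrm e_{i^*},a^*]$ against $[0,Q_{d+1}]$, read condition (a) as ``the edge of $X'_{d+1}$ through $\xi'$ obtained by dropping the $j$-th constraint is unbounded,'' and condition (b) as ``that half-line is unbounded in the $+Q_{d+1}$ direction, hence meets $F_d$ for $R$ large.'' Your identification of the rank-one denominator as $1-vB'\mathrm e_j=[0,Q_{d+1}]B'\mathrm e_j$ is also correct, although the intermediate identity you state for $vB'$ is not: with $v=[-\mathrm e_{i^*},a^*]-[0,Q_{d+1}]$ and $[-\mathrm e_{i^*},a^*]$ equal to the $j$-th row of the matrix of active constraints at $\xi'$, one has $vB'=e_j^T-[0,Q_{d+1}]B'$, not $[-\mathrm e_{i^*},a^*]B'-e_j^T$; your subsequent scalar computation happens to land on the right value anyway.

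The genuine gap is the final step, which you assert rather than carry out, and which fails as described. First, your right-hand side is wrong: under the ansatz \eqref{ansatz} the flag rows of $\bact$ carry $0$, not $r_{d+1}$ --- the $R$-dependent data enters only through the second term of the ansatz --- so the system determining the constant term is $\Aact\left[\begin{smallmatrix}\lambda_0\\ \xi_0\end{smallmatrix}\right]=\bact$ with $\bact=\bact'-b(i^*,a^*)\mathrm e_j$. Second, the computation does not ``collapse'' to the stated correction. Writing $\gamma=[0,Q_{d+1}]B'\mathrm e_j$, one finds $\Aact\left[\begin{smallmatrix}\lambda'_0\\ \xi'_0\end{smallmatrix}\right]=\bact+(Q_{d+1}\xi'_0)\,\mathrm e_j$ and $\Aact B'\mathrm e_j=\gamma\,\mathrm e_j$, whence
\[
\left[\begin{matrix}\lambda_0\\ \xi_0\end{matrix}\right]
=\left[\begin{matrix}\lambda'_0\\ \xi'_0\end{matrix}\right]
-\frac{Q_{d+1}\xi'_0}{\gamma}\,B'\mathrm e_j .
\]
The numerator is $-Q_{d+1}\xi'_0$, which equals the \emph{constant} coefficient $t_0^*$ of the pivot length in Lemma~\ref{lem:Ij2}(c); only the top coefficient $t^*_{n-d}$ equals $r_{d+1}$, and $t_0^*\ne r_{d+1}$ in general. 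A one-line check with $n=s=1$, $A_1=\{0,1\}$, $Q_1=1$ exhibits the discrepancy: there $(\lambda'_0,\xi'_0)=(-b_0,\,b_1-b_0)$, $B'\mathrm e_1=(-1,-1)^T$, $\gamma=-1$, the true answer is $(\lambda_0,\xi_0)=(-b_1,0)$, which the numerator $-Q_1\xi'_0=b_0-b_1$ reproduces, while the numerator $r_1$ does not. So either the displayed formula of the Lemma needs its numerator corrected or your argument must produce it; as written, your proof asserts a formula that the computation it outlines does not yield, and the hand-waved ``bookkeeping that mirrors Lemma~\ref{lem:Ij1}(c)'' is exactly where the discrepancy sits. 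The ``if and only if'' discussion is acceptable as an outline, but it too ultimately rests on this unperformed computation.
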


\section{The main algorithm}\label{sec:algorithm}

Here is a simplified version of the algorithm. Let $\mathcal V_d$ denote
the set of \rev{points (vertices)} of \rev{the tropical curve} $\revc{G}_d$ and 
$\mathcal E_d$ the set of finite segments \rev{(edges)}
in \revd{$\revc{G}_d$, $1 \le d \le n$.} The degree of the graph $\Gamma_d = (\mathcal V_d,
\mathcal E_d)$ is at most $\max \mm{i}{d-1}$. 
Consider now the graph
$\Gamma = (\mathcal V, \mathcal E)$
\glossary{$\Gamma =  (\mathcal V, \mathcal E)$& Graph to be explored. Union
of tropical curves.}
where $\mathcal V = \cup \mathcal V_d$ and $\mathcal E$ is the union
of all the $\mathcal E_d$ with the set of 
connecting segments $[\xi, \xi']$ from Lemma~\ref{lem:Ij2}, 
$\xi \in V_{d}$, $\xi'\in V_{d+1}$. The degree
of $\Gamma$ is at most $1+2\max \mm{i}{d-1}$. \rev{The AllMixedCells algorithm
is a graph walk through $\Gamma$}.

The algorithm will operate on two sets: a set $\Vexplore \subset \mathcal V$ to
explore, and a set $\Vknown \subset \mathcal V$ of already explored lower \revc{faces}. 
Notice that $\# \mathcal V \le \sum_{d=2}^n v_d$
where $v_d$ is the number of vertices of $\revc{G}_d$. Therefore
\glossary{$v_d$ & number of vertices of $\revc{G}_d$.}
$\#\mathcal E \le (\sum_{d=2}^n v_d) \deg (\Gamma)/2$.

The operator $\mathrm{Visited}(\Vknown, L)$ returns true if $L \in \mathcal \Vknown$.
Otherwise, it inserts $L$ in $\Vknown$ and returns false. 
The simplified graph exploration algorithm is:
\medskip
\par

\centerline{\boxed{
\begin{minipage}{\textwidth - 2em}
\begin{trivlist}
\item {\bf Algorithm} {\sc AllMixedCells} \label{allmixedcells}
\item $\Vexplore \leftarrow \{ L_{F_0} \}$.
\item $\Vknown \leftarrow \emptyset$
\item {\bf While }$\Vexplore \ne \emptyset$,
\subitem Remove some $L_{\xi}$ from $\Vexplore$.
\subitem {\bf If not} {\sc Visited}$(\Vknown, L_{\xi})$
\subsubitem {\bf If} $L_{\xi}$ is a mixed cell, {\bf then} output $L_{\xi}$.
\subsubitem {\bf For} each neighbor $L_{\xi'}$ of $L_{\xi}$ in $\Gamma$, 
      insert $L_{\xi'}$ in $\Vexplore$.
\subitem Discard $L_{\xi}$.
\end{trivlist}
\end{minipage}
}}
\medskip
\par
We assume that elements of $\sqcup A_i$ are \revb{uniquely} labeled by an integer
from $1$ to $\sum \#A_i$. 
A \revc{face} $L$ in $\Vexplore$ is represented by its list of active constraints,
and each active constraint is represented by an integer. Thus, a \revc{face}
$L_{\xi}$ for $\xi \in \mathcal V_d$ is represented by a strictly increasing
list of $s+d$ integers. From this representation one can find the 
inverse $B$ to the matrix of active constraints in time
$O(n^{\omega})$ with \rev{$\omega<2.3728639$} 
the exponent of matrix multiplication \rev{\cite{VassilevskaWilliams,LeGall}}.
Once we obtained $B$ it is easy to compute $\mathbf \xi_0$ and $\mathbf \lambda_0$. 

The neighbors $L_{\xi'}$ can be listed by the formulas in
Lemmas~\ref{lem:Ij1} to ~\ref{lem:Ij3}. Not all inactive constraints need to be
tested. Suppose we drop the $j$-th constraint. All other active constraints
will be called the {\em remaining} constraints.

\begin{lemma}
Let $(i, \mathbf a)$, $(i,\mathbf a')$ be active constraints in 
the same lower \revc{face}. Then,
\[
[(\mathbf a, b(i,\mathbf a)), (\mathbf a', b(i,\mathbf a') )]
\]
is a sharp lower edge of 
\[
{\hat {\mathcal A}}_i = \conv{ \{ (\mathbf a, b(i,a)): a \in A_i \}} \subset \mathbb R^n \times \mathbb R.
\]
\end{lemma}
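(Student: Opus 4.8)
The plan is to exhibit the segment as an edge of a \emph{simplicial} lower face of $\hat{\mathcal A}_i$, and then to invoke the elementary fact that a face of a face of a polytope is again a face.

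First I would unravel the hypothesis. Write the given lower facet as $L=L_{\boldsymbol\xi}$ for some $\boldsymbol\xi\in\Xi(L)$. By definition, $(i,\mathbf a)$ being an active constraint of $L$ means $\lambda_i(\boldsymbol\xi)=\boldsymbol\xi(\mathbf a)-b(i,\mathbf a)$, and likewise for $\mathbf a'$. Since $\hat b_i(\mathbf a)\le b(i,\mathbf a)$ while also $\hat b_i(\mathbf a)\ge \boldsymbol\xi(\mathbf a)-\lambda_i(\boldsymbol\xi)=b(i,\mathbf a)$ (the lower convex hull being the supremum of the affine functions $\boldsymbol\xi(\cdot)-\lambda_i(\boldsymbol\xi)$), we get $\hat b_i(\mathbf a)=b(i,\mathbf a)$, so $(\mathbf a,b(i,\mathbf a))$ lies on $L_{i,\boldsymbol\xi}$, and the same for $(\mathbf a',b(i,\mathbf a'))$. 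The face $L_{i,\boldsymbol\xi}$ is a lower face of $\hat{\mathcal A}_i$: it is exposed by the linear functional $\phi\colon(\mathbf x,y)\mapsto \boldsymbol\xi(\mathbf x)-y$, whose maximum over $\hat{\mathcal A}_i$ equals $\lambda_i(\boldsymbol\xi)$; in particular $L_{i,\boldsymbol\xi}$ lies in the $n$-dimensional affine hyperplane $H=\{(\mathbf x,y):\boldsymbol\xi(\mathbf x)-y=\lambda_i(\boldsymbol\xi)\}$.

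The key step — and the one I expect to need the most care — is to prove that $L_{i,\boldsymbol\xi}$ is a simplex, using Definition~\ref{def:generic}. Let $V=\{(\mathbf a'',b(i,\mathbf a'')):\mathbf a''\in A_i,\ \boldsymbol\xi(\mathbf a'')-b(i,\mathbf a'')=\lambda_i(\boldsymbol\xi)\}$; by the previous paragraph these are exactly the lattice points of $\hat{\mathcal A}_i$ lying on $L_{i,\boldsymbol\xi}$, and $L_{i,\boldsymbol\xi}=\conv V$. Suppose $V$ were affinely dependent. Because $V\subseteq H$ and $\dim H=n$, there is an affinely dependent subset $\{(\mathbf a_1,b(i,\mathbf a_1)),\dots,(\mathbf a_k,b(i,\mathbf a_k))\}\subseteq V$ with $k\le n+2\le n+s+1$, say with coefficients $c_1,\dots,c_k$ not all zero, $\sum_\ell c_\ell=0$ and $\sum_\ell c_\ell\mathbf a_\ell=0$; on $H$ one has $b(i,\mathbf a_\ell)=\boldsymbol\xi(\mathbf a_\ell)-\lambda_i(\boldsymbol\xi)$, hence automatically $\sum_\ell c_\ell b(i,\mathbf a_\ell)=\boldsymbol\xi\bigl(\sum_\ell c_\ell\mathbf a_\ell\bigr)-\bigl(\sum_\ell c_\ell\bigr)\lambda_i(\boldsymbol\xi)=0$. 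Therefore $\sum_\ell c_\ell[-\mathrm{e}_i,\mathbf a_\ell,b(i,\mathbf a_\ell)]=0$, so these $k\le n+s+1$ vectors are linearly dependent. By Definition~\ref{def:generic}, $[0,\dots,0,1]$ must then be a linear combination of them, say $\sum_\ell d_\ell[-\mathrm{e}_i,\mathbf a_\ell,b(i,\mathbf a_\ell)]=[0,\dots,0,1]$. Reading the first $s$ coordinates gives $\sum_\ell d_\ell=0$, and the next $n$ give $\sum_\ell d_\ell\mathbf a_\ell=0$; but then the last coordinate forces $1=\sum_\ell d_\ell b(i,\mathbf a_\ell)=\boldsymbol\xi\bigl(\sum_\ell d_\ell\mathbf a_\ell\bigr)-\bigl(\sum_\ell d_\ell\bigr)\lambda_i(\boldsymbol\xi)=0$, a contradiction. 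Thus $V$ is affinely independent and $L_{i,\boldsymbol\xi}=\conv V$ is a simplex with vertex set $V$.

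Finally I would conclude. Since $\mathbf a\ne\mathbf a'$, the points $(\mathbf a,b(i,\mathbf a))$ and $(\mathbf a',b(i,\mathbf a'))$ are two distinct vertices of the simplex $L_{i,\boldsymbol\xi}$, and the segment joining two vertices of a simplex is a one-dimensional face of that simplex. As $L_{i,\boldsymbol\xi}$ is a (lower) face of $\hat{\mathcal A}_i$, and a face of a face is again a face, this segment is a one-dimensional face of $\hat{\mathcal A}_i$; it is a lower face because it is contained in the lower face $L_{i,\boldsymbol\xi}$. Being a genuine edge of $\hat{\mathcal A}_i$, it is a sharp lower edge, which is the asserted statement.
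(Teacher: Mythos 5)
Your proof is correct. Note that the paper states this lemma without supplying any proof, so there is no argument of the author's to compare against; your write-up fills in exactly the missing reasoning. The chain you use is the natural one: activity of $(i,\mathbf a)$ forces $\hat b_i(\mathbf a)=b(i,\mathbf a)$, so both lifted points lie on the face $L_{i,\boldsymbol\xi}$ of $\hat{\mathcal A}_i$ exposed by $(\mathbf x,y)\mapsto\boldsymbol\xi(\mathbf x)-y$; then the general-position hypothesis (Definition~\ref{def:generic}) rules out any affine dependence among the active lifted points, since an affinely dependent subset of size $k\le n+2\le n+s+1$ would give linearly dependent rows $[-\mathrm e_i,\mathbf a_\ell,b(i,\mathbf a_\ell)]$, and the resulting representation of $[0,\dots,0,1]$ contradicts the fact that all these points satisfy $b(i,\mathbf a_\ell)=\boldsymbol\xi(\mathbf a_\ell)-\lambda_i(\boldsymbol\xi)$; hence $L_{i,\boldsymbol\xi}$ is a simplex, and the segment joining two of its vertices is a face of it, hence (face of a face) an edge of $\hat{\mathcal A}_i$, lying on the lower hull. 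The only points I would tighten are cosmetic: the phrase ``lattice points'' should be ``lifted points $(\mathbf a'',b(i,\mathbf a''))$'' since the lifting values are real, and the final step ``contained in a lower face, hence a lower edge'' can be made explicit by perturbing the exposing functional ($(\boldsymbol\xi,-1)$ plus a small multiple of a functional exposing the edge inside $L_{i,\boldsymbol\xi}$ still has negative last coordinate), which also justifies calling the edge ``sharp'' in the paper's sense of belonging to the $1$-skeleton of the lower convex hull.
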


It follows from the Lemma that the only inactive constraints to
check are those $(i^*, a^*)$ so that 
$[(\mathbf a^*, b(i^*,\mathbf a^*)), (\mathbf a', b(i^*,\mathbf a') )]$
is a sharp lower edge of ${\hat {\mathcal A}}_{i^*}$ for all remaining
constraints
$\mathbf a'$. Thus,

\begin{lemma} \label{cost:nbhd}
Assume that a lower \revc{face} $L_{\mathbf \xi}$ is
given and that the matrix $B$ inverse to the matrix of active constraints
is known.
\begin{enumerate} 
\item If $s = 1$ and $d=n$, the neighboring lower \revc{face} of 
$L_{\mathbf \xi}$ can be produced in time at most
\[
O( n^2 \# A_1 ).
\]
\item If $n=s$ and $m_1 = \cdots = m_n = 2$ and the sharp lower edges of
the ${\hat {\mathcal A}}_i$'s 
are known, then the neighboring lower \revc{faces} of 
$L_{\mathbf \xi}$ can be produced in time at most
\[
O( n^2 \sum ( E_i )).
\]
where $E_i$ is the maximal number of lower sharp edges incident to
a sharp vertex of ${\hat {\mathcal A}}_i$.
\glossary{$E_i$ & Degree of 1-skeleton of lifting of $A_i$}
\end{enumerate}
\end{lemma}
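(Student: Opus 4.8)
The plan is to count, carefully, the arithmetic cost of producing all neighbors of a given lower facet $L_{\boldsymbol\xi}$ once $B = \Aact^{-1}$ is available. By Lemmas~\ref{lem:Ij1} and~\ref{lem:Ij2}, producing a neighbor amounts to: (i) choosing which of the $m_q(\boldsymbol\xi)+1$ droppable constraints in the $q$-th block to release (at most $\max m_i(d-1)+1$ choices, hence $O(1)$ in each of the two regimes considered); (ii) for the dropped index $j$, forming the vector $B\mathrm e_j$ and the scores $t_l(i,\mathbf a)$ over the eligible inactive constraints; (iii) taking an $\argmin$ over the resulting list of polynomials in $R$; and (iv) performing the rank-one update to obtain the new $B'$ via Lemma~\ref{rank_one_updates} and the new $(\boldsymbol\lambda_0',\boldsymbol\xi_0')$. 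Steps (i) and (iv) cost $O(n^2)$ each (a rank-one update of an $(n+s)\times(n+s)$ matrix, with $s\le n$), independent of the number of constraints examined, so the whole estimate hinges on bounding the number of inactive constraints that must be scored in steps (ii)–(iii).

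For part (1), $s=1$, $d=n$: here there are no affine flag rows, $\Aact$ is $(n+1)\times(n+1)$, and every element of $A_1$ that is not one of the $n+1$ active vertices is an inactive constraint. There is no edge-structure shortcut available — one simply scans all of $A_1$. For each $\mathbf a\in A_1$ one computes $[-\mathrm e_1,\mathbf a]\,B\,\mathrm e_j$ and $[-\mathrm e_1,\mathbf a]\,B\,\mathrm e_{n+s+1-l}$ for the relevant $l$, but for $d=n$ there are no $R$-terms, so only $t_0(i,\mathbf a)$ matters: this is two inner products of length $n+1$, i.e. $O(n)$ per element, $O(n\,\#A_1)$ in total; the $\argmin$ over $\#A_1$ scalars is $O(\#A_1)$; and the $O(n^2)$ update dominates-or-is-dominated by this depending on whether $\#A_1 \gtrless n$, giving the stated bound $O(n^2\,\#A_1)$ (writing it this way is safe since $\#A_1 \ge n+1$). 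Then quote Lemma~\ref{lem:Ij1} to conclude these scanned constraints are exactly the candidate neighbors and that each yields a genuine adjacent facet.

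For part (2), $n=s$, $m_i\equiv 2$: now $\max m_i(d-1)\le 1$, so there are $O(1)$ droppable constraints. The key reduction is the preceding Lemma on sharp lower edges: a candidate $(i^*,\mathbf a^*)$ can only enter the new active set if $[(\mathbf a^*,b(i^*,\mathbf a^*)),(\mathbf a',b(i^*,\mathbf a'))]$ is a sharp lower edge of $\hat{\mathcal A}_{i^*}$ for the relevant remaining active constraint $(\mathbf a',\cdot)$ in block $i^*$; thus the inactive constraints worth testing in block $i$ number at most $E_i$, and $\sum_i E_i$ in total. For each such constraint the score is a polynomial in $R$ of degree $\le n-d$ whose coefficients are inner products of length $n+s \le 2n$: $O(n)$ per coefficient, $O(n^2)$ coefficients worst case, but more cheaply $O(n\cdot n)$ total since the number of coefficients is $n-d$ and each costs $O(n)$ — in any case $O(n^2)$ per candidate is a safe bound, yielding $O(n^2\sum E_i)$ for the scoring, and the $\argmin$ over $\sum E_i$ polynomials (comparing leading coefficients, $O(n)$ words each) is absorbed. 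The $O(n^2)$ rank-one update is additive and absorbed since $\sum E_i \ge 1$. Invoking Lemmas~\ref{lem:Ij1} and~\ref{lem:Ij2} again certifies correctness of the list produced.

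The main obstacle I expect is item (2)'s edge-pruning step: one must argue that restricting attention to sharp lower edges of the $\hat{\mathcal A}_i$ does not discard any genuine neighbor. This follows from the cited Lemma characterizing pairs of active constraints in a common lower facet as sharp lower edges, applied to the pair consisting of the incoming candidate and each surviving active vertex in the same block; but one should be careful that in the semi-mixed $m_i=2$ case a block contributes up to $m_i(d)+1 = 3$ active constraints, so "for all remaining constraints $\mathbf a'$ in block $i^*$" is the correct quantifier, and $E_i$ must be read as the degree of the 1-skeleton (max edges at a vertex), not merely the total edge count. A secondary care point is merely bookkeeping: $s\le n$ everywhere, so all matrix dimensions are $O(n)$ and every "$O(n^2)$" above is legitimately $O(n^2)$ rather than $O((n+s)^2)$ in disguise.
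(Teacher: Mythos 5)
Your part (2) follows the paper's own route: two or three droppable constraints, pruning the candidate entering constraints by the sharp-lower-edge lemma so that at most $\sum E_i$ inactive constraints are scored, each score being a polynomial in $R$ with at most $n+1-d$ coefficients at $O(n)$ arithmetic operations per coefficient, giving $O(n^2\sum E_i)$; this is essentially the paper's argument with the rank-one update cost added and absorbed.

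Part (1), however, contains a genuine slip. You assert in step (i) that the number of droppable constraints is ``at most $\max m_i(d-1)+1$ choices, hence $O(1)$ in each of the two regimes.'' In the regime $s=1$, $d=n$ this is false: the single block has multiplicity $m_1=n$, so the facet has $n+1$ active constraints and, by Lemma~\ref{lem:balancing}, $m_1(d-1)+2=n+1$ incident edges --- every one of the $n+1$ active constraints is droppable, and each must be dropped in turn to produce \emph{all} neighboring lower facets (this is precisely what the unmixed call inside {\sc AllMixedCellsFull} needs in order to enumerate the whole lower hull of $\hat{\mathcal A}_i$). With only $O(1)$ drops, the procedure you describe would miss most neighbors, so the enumeration is incomplete even though your stated bound happens to hold. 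The correct accounting, which is the paper's, is: $(n+1)$ drops, each requiring a scan of the $\#A_1$ inactive constraints at $O(n)$ per test (only $t_0$ matters since $d=n$), for a total of $O(n^2\,\#A_1)$; in other words the $n^2$ factor in part (1) comes from (number of drops) $\times$ (cost per test), not from the rank-one update being ``dominated or dominating'' as you argue. Once this count is repaired, your write-up coincides with the paper's proof.
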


\begin{proof}
If $s=1$, there are $n+1$ active constraints that may be 'dropped'
by using the formulas in Lemmas~\ref{lem:Ij1} to ~\ref{lem:Ij3}.
Since $d=n$, testing an inactive constraint costs $O(n)$ arithmetic
operations. Testing all inactive constraints, the $\argmin$ in the formulas
costs at most $n^2 \# A_1$.

In the second case, there are two or three constraints that may be
'dropped'. Testing an inactive constraint costs $O(n(n+1-d)) \le O(n^2)$
arithmetic operations. There are at most $\sum E_i$ inactive constraints
to test for the $\argmin$.

\end{proof}
\begin{remark} 
One does not need to explore edges going from $\mathcal V_d$ to
$\mathcal V_{d-1}$. 
\end{remark}

\begin{remark} If $L_{\mathbf \xi}$ and $L_{\mathbf \xi'}$ are
adjacent and $B$ is known, the inverse $B'$ of the matrix
of active constraints of $L_{\xi'}$ can be produced by a rank-one update
with cost $O(n^2)$. 
\end{remark}

In order to bound the cost of each {\sc While} iteration inside
algorithm {\sc AllMixedCells}, we still need to bound the cost
of storing and retrieving data from sets $\Vexplore$ and $\Vknown$.
We represented each lower \revc{face} $L$ by a list of active constraints, that
is a unique strictly increasing list of integers $(l_1, \cdots, l_{s+d})$
from $1$ to $\sum \# A_i$. To each of those lists, we associate a 
{\em hash value},
that is a real number in $[0,1]$ computed by the formula:
\[
\sigma(L) = H_1 l_1 + \cdots + H_{s+d} l_{s+d} \mod 1
.
\]
where the $H_j \in [0,1]$ are uniformly distributed 
random numbers computed once and for
all before starting {\sc AllMixedCells}. The probability of a
{\em collision}, that is of two different lower \revc{faces} having the
same value of $\sigma$, is zero. The {\em hash function} $\sigma$
will be used to order the lower \revc{faces} in such a way that the
comparison cost is one.

The sets $\Vexplore$ and $\Vknown$ are represented by balanced
trees\cite{KnuthV3}*{Ch.6}. For lower \revc{faces} in $\Vknown$, only the hash 
value needs to be
stored. The cost of the operator {\sc Visited} is therefore bounded
above by $O(\log (\# \mathcal V))$. For the case of $\Vexplore$,
we can also store the list of active constraints. Thus:
\begin{lemma} Under the assumptions above, the cost of
storing or retrieving a value in $\Vexplore$ or $\Vknown$ is at most
$O(n+ \log (\# \mathcal V))$.
\end{lemma}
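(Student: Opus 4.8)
The plan is to split the cost of a single tree operation into three independent pieces: (i) computing the hash key $\sigma(L)$ of the facet being stored or retrieved, (ii) walking the balanced tree down to the relevant node, including any rebalancing on insertion, and (iii) for $\Vexplore$, copying the list of active constraints at that node.

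For (i) I would note that a facet $L_{\boldsymbol\xi}$ with $\boldsymbol\xi\in\mathcal V_d$ is encoded as a strictly increasing list of $s+d\le 2n$ integers (using $s\le n$), so the key $\sigma(L)=\sum_j H_j l_j \bmod 1$ is evaluated with $O(n)$ arithmetic operations and $O(n)$ memory accesses. For (ii), the key observation is that anything ever placed into $\Vexplore$ or $\Vknown$ is a $\Gamma$-neighbour of a previously examined vertex, hence a vertex of some $X_d'$, hence an element of $\mathcal V$; so both trees have at most $\#\mathcal V$ nodes, a balanced implementation (\cite{KnuthV3}*{Ch.6}) has depth $O(\log\#\mathcal V)$, and a lookup, insertion, or rebalancing step touches $O(\log\#\mathcal V)$ nodes. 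Since the tree is keyed by the scalar $\sigma(L)$, each node visit amounts to a single real comparison, costing one time unit; and because the probability of a hash collision is zero, the key determines the facet with probability one, so the ordering is unambiguous. For (iii), writing or reading the $s+d=O(n)$ active constraints stored at the located node of $\Vexplore$ is a further $O(n)$ memory accesses.

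Combining the three pieces: a {\sc Visited} call on $\Vknown$ costs $O(n)$ for the key plus $O(\log\#\mathcal V)$ for the traversal; a store or retrieve on $\Vexplore$ costs the same plus an extra $O(n)$ to move the constraint list. In either case this is $O(n+\log\#\mathcal V)$, as claimed.

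I do not anticipate a genuine obstacle here; the argument is essentially bookkeeping once the earlier structural facts (collisions have probability zero, every stored vertex lies in $\mathcal V$) are in place. The one point deserving a word of care is the reduction of each tree comparison to unit cost: this relies on the probability-zero collision statement, so that ordering facets by a single real key is consistent with probability one, together with the convention that one real comparison is one time step in the randomized real RAM model.
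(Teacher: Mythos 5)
Your proposal is correct and follows essentially the same route as the paper: the paper's justification is exactly the preceding setup (scalar hash key with probability-zero collisions giving unit-cost comparisons, balanced trees on at most $\#\mathcal V$ nodes giving the $O(\log\#\mathcal V)$ term, and the $O(n)$-length list of active constraints stored in $\Vexplore$ giving the $O(n)$ term). Your version merely makes the bookkeeping explicit, which is fine.
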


The algorithm below supports the bounds in Theorem~\ref{main}. \rev{
It proceeds in two steps. First it computes the sharp lower edges of
each $\hat {\mathcal A_i}$ (See Lemma \ref{cost:nbhd}). For each $\mathbf a \in A_i$, its lifting $(\mathbf a, b_i(\mathbf a))$ is a sharp lower vertex of $\hat {\mathcal A_i}$
if and only if it belongs to the border of a sharp lower edge. We will now
replace the lifting $b_i: A_i \rightarrow [0,1]$ with a new general lifting,
$\tilde b_i: A_i \rightarrow [0,2]$ with
}
\[
\rev{
\tilde b_i (\mathbf a) = \left\{ \begin{array}{ll}
b_i(\mathbf a) & \text{if $(\mathbf a, b_i(\mathbf a))$ a sharp lower vertex, and}\\
b_i(\mathbf a)+1 &\text{otherwise.}
\end{array}\right.
}
\]

\rev{The second step is the {\sc AllMixedCells} algorithm applied to the
lifting $(\tilde b_1, \dots, \tilde b_n)$}.

\centerline{\boxed{
\begin{minipage}{\textwidth - 2em}
\begin{trivlist}\label{allmixedcellsfull}
\item {\bf Algorithm} {\sc AllMixedCellsFull}$(n,A_1, \dots A_n)$ \label{support-alg}
\item {\bf Draw} a random lifting $b: \sqcup_i A_i \rightarrow [0,1]$, each coordinate
uniformly distributed.
\item {\bf Draw} uniform random numbers $H_1, \dots, H_{2n}$ uniformly distributed
in $[0,1]$.
\item {\bf Draw} an orthogonal matrix $Q$ uniformly distributed with respect to the Haar measure of $SO(n)$ and define $F_i$ as the space orthogonal to the last $n-i$ columns of $Q$.
\item {\bf For} $i=1, \dots, n$ invoke {\sc AllMixedCells} with $s=1$ and
input $A_i, b_i$. Produce a list of the lower edges of $\hat A_i$. 
\item {\bf Set} $V \leftarrow 0$.
\item
Invoke {\sc AllMixedCells} with $s=n$, $m_i=2$ and input $A_1, \dots, A_n$,
\rev{$\tilde b_1, \dots, \tilde b_n$}.
For each mixed lower \revc{face}, add the volume of the mixed cell to $V$
and output the lower \revc{face}.
\item {\bf Output} $V$.
\end{trivlist}
\end{minipage}
}}

\medskip
\par

We can prove the first part of Theorem~\ref{main}:

\begin{lemma}
Let $d_i = \dim \conv{A_i}$ and $V_i = d_i! \vol {\conv{A_i}}_{d_i} 
$.
Let 
\[
T = (\sum_{i=2}^d v_i) \left( n^2 \sum_{i=1}^n E_i + \log \sum_{i=2}^n v_i \right),
T' = (\max V_i) \left( n^2 \sum_{i=1}^n \# A_i + \log \max_{i=1, \dots, n} V_i \right).
\]
The algorithm {\sc AllMixedCellsFull} performs at most $O(T+T')$ arithmetic operations.
\end{lemma}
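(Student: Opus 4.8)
The plan is to walk through {\sc AllMixedCellsFull} (p.\pageref{allmixedcellsfull}) one instruction at a time and charge the arithmetic cost of each instruction either to $T$ or to $T'$. The preprocessing instructions --- drawing the lifting $b$, the hash seeds $H_1,\dots,H_{2n}$, and a Haar-random $Q\in SO(n)$ (say by Gram--Schmidt on $n$ Gaussian columns) --- cost $O(n^{2}+\sum_i\#A_i)$ operations, which is dominated by $T'$ because $\sum_i\#A_i\ge n$. The final output instruction is $O(1)$. Hence it remains to estimate the $n$ invocations of {\sc AllMixedCells} with $s=1$, which I will charge to $T'$, and the single invocation with $s=n$, $m_i=2$, which I will charge to $T$; summing the two estimates gives the claim.

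For the invocation with $s=n$ and $m_i=2$ I would reuse the bookkeeping of Section~\ref{sec:algorithm} verbatim. The graph $\Gamma=(\mathcal V,\mathcal E)$ has $\#\mathcal V\le\sum_{d=2}^{n}v_d$ and, since $m_i(d-1)\le 2$, degree $O(1)$; thus $\#\mathcal E=O(\sum_{d=2}^{n}v_d)$ and the {\sc While} loop runs $O(\sum_{d=2}^{n}v_d)$ times, each vertex being inserted into $\Vexplore$ at most $\deg\Gamma+1$ times. One loop iteration incurs three costs: (i) recovering $B=\Aact^{-1}$ and then $\xi_0,\lambda_0$ from the stored list of active constraints, in $O(n^{2})$ by exploiting the block structure of the Cayley matrix or by maintaining $B$ along the traversal with the rank-one update of Lemma~\ref{rank_one_updates}; (ii) enumerating the neighbours through Lemmas~\ref{lem:Ij1}--\ref{lem:Ij3}, which by Lemma~\ref{cost:nbhd}(2) costs $O(n^{2}\sum_iE_i)$ --- this is where the sharp lower edges of the $\hat{\mathcal A}_i$ are needed, and they were produced by the $s=1$ calls; (iii) the {\sc Visited} tests and the insertions into $\Vexplore,\Vknown$, which by the balanced-tree bound cost $O(n+\log\#\mathcal V)=O(n+\log\sum_{d}v_d)$ each. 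Absorbing the $n$ and $n^{2}$ terms into $n^{2}\sum_iE_i$ (legitimate in the non-degenerate case $\sum_iE_i\ge 1$; otherwise the problem is trivial), one iteration costs $O\!\left(n^{2}\sum_iE_i+\log\sum_{d}v_d\right)$, and multiplying by the number of iterations yields exactly $O(T)$.

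For the $i$-th invocation with $s=1$ the argument has the same shape but runs in ambient dimension $d_i=\dim\conv A_i$: the explored graph has $v^{(i)}\defeq\sum_{d}v_d^{(i)}$ vertices, one iteration costs $O(n^{2}\#A_i+\log v^{(i)})$ by Lemma~\ref{cost:nbhd}(1) together with the balanced-tree bound, and reading off the sharp lower edges of $\hat{\mathcal A}_i$ from the visited top-dimensional facets adds nothing asymptotically. The step I expect to be the genuine obstacle is the bound $v^{(i)}=O(V_i)$: applying Theorem~\ref{main}(a) to this subproblem gives $v_d^{(i)}\le d_i!\,V(\mathcal A_i,\dots,\mathcal A_i,B^{d_i},\dots,B^{d_i})$, and one must check that summing these quermassintegrals over $d$ stays $O(V_i)$ --- most cleanly for the top level $d=d_i$, where a lifting in general position triangulates $\conv A_i$ into lattice simplices of normalized volume $\ge 1$, so their number is at most $V_i$, and with a matching control of the lower levels $X_d'$, $d<d_i$. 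Granting $v^{(i)}=O(V_i)$, the $i$-th call costs $O\!\left(V_i(n^{2}\#A_i+\log V_i)\right)$; summing over $i$, factoring out $\max_iV_i$, and absorbing $\sum_i\log V_i$ into $n^{2}\sum_i\#A_i$ (using $\sum_i\#A_i\ge n$) gives $O(T')$. The only delicate points in the whole proof are this vertex count for the unmixed subproblems and the routine verification that every lower-order term --- the $n$, $n^{2}$, $n^{\omega}$ and the logarithms --- is indeed absorbed into the leading terms of $T$ and $T'$.
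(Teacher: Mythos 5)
Your overall decomposition is the same as the paper's: the random draws (lifting, hash seeds, $Q$ by Gram--Schmidt on a Gaussian matrix) are cheap, the $n$ unmixed calls with $s=1$ are charged to $T'$ via Lemma~\ref{cost:nbhd}(1), and the final call with $s=n$, $m_i=2$ is charged to $T$ via Lemma~\ref{cost:nbhd}(2) together with the balanced-tree bound for {\sc Visited}; your treatment of that last call matches the paper and is fine.

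The genuine gap is precisely the step you flag and then assume: the bound $v^{(i)}=O(V_i)$ for the unmixed subproblems. Your proposed route --- apply Theorem~\ref{main}(a) to the subproblem and control the sum over the flag levels $d<d_i$ --- does not obviously close it: those bounds are quermassintegrals of the form $d_i!\,V(\mathcal A_i,\dots,\mathcal A_i,B^{d_i},\dots,B^{d_i})$, and for thin polytopes such quantities are not dominated by $V_i=d_i!\,\vol_{d_i}(\conv{A_i})$, so ``summing stays $O(V_i)$'' is not a verification you can expect to carry out, and without it the stated $T'$ would not follow. The paper sidesteps this entirely by not running the flag cascade in the $s=1$ calls: one top-dimensional lower facet of the lifted polytope ${\hat{\mathcal A}}_i$ is found by standard linear programming, and only the adjacency graph of top-dimensional lower facets is explored (this dual graph is connected, so no starting points from lower-dimensional strata $X'_d$, $d<d_i$, are needed and no such vertices enter the count). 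Since the lifting is in general position, each explored facet is a simplex of normalized volume at least $1$, hence there are at most $d_i!\,\vol_{d_i}(\conv{A_i})=V_i$ of them; combined with the per-facet cost $O(n^2\#A_i)$ of Lemma~\ref{cost:nbhd}(1) and the $O(\log V_i)$ tree operations, this gives $O(T')$ directly. With that replacement for your unmixed step, the rest of your argument goes through exactly as in the paper.
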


\begin{proof}
Uniform random numbers $b(i,a)$ and $H_i$ can be obtained at unit cost.
To produce the matrix $Q \in SO(n)$, one first produces a random matrix $A$,
where each coordinate $A_{ij}$ is an independently distributed 
$N(0,1)$ random numbers. By using standard algorithms like the Box-Müller
method, one obtains each $A_{ij}$ within bounded cost. Then, $Q$ may be
produced by computing the QR factorization of $A$, or equivalently be
Gram-Schmidt orthonormalization of the columns of $A$.

Computing the dimension $d_i$ of each $A_i$ is easy.
There are at most 
\[
d_i! \vol {\conv{A_i}} _{d_i} 
\] 
lower \revc{faces} to
explore, and the first one can be found by standard linear programming
techniques. The cost of each exploration step is bounded by Lemma~\ref{cost:nbhd}(1). Therefore, the total cost is $O(T')$.

The last call to {\sc AllMixedCells} must explore
$\# \mathcal V = \sum v_i$ lower \revc{faces}, and the cost of exploring
each one was bounded in Lemma~\ref{cost:nbhd}(2). The total cost of
this step is therefore $O(T)$.
\end{proof}

\revb{\begin{remark}The Box-Müller method requires the use of a logarithm,
a square root, sine and cosine. The Gram-Schmidt method or QR factorization
uses square roots. Rather than approximating those functions, I 
assumed that they are available at unique cost. 
\end{remark}}

\section{Deterministic complexity analysis}
\label{sec:deterministic}

Let $v_d$ be the number of vertices of $\revc{G}_d$. The number $v_d$ depends on the lifting $b: A_1 \sqcup \cdots
\sqcup A_s \rightarrow \mathbb R$ that is assumed
generic, but fixed.
It also depends on the flag $(F_0 \subset \cdots \subset F_n)$ that we take of the form 
\[
F_{n-d}(R) = 
\{
\xi \in \mathbb R^n: Q_{d+1} \xi = R^{n-d} , \dots, Q_{n-1} \xi = R^2,  \revb{Q}_n \xi = R
\}
\]
where $Q_1, \dots, Q_n$ are the rows of a generic matrix in $O(n)$, and $R\revb{>0}$ is assumed to be large enough.
\medskip
\par
Let $A_0 = \{ 0, \mathrm e_1, -\mathrm e_1, \mathrm e_2, \dots, -\mathrm e_n\}$ and
$\mathcal A_0 = \conv{A_0}$. The solid $\mathcal A_0$ is also
known as the {\em $n$-orthoplex} and denoted by $\beta_n$. Alternative
terminologies are {\em $n$-cross} and {\em cocube}. Notice that
\[
\frac{1}{\sqrt n} B^n \subset \beta_n \subset B^n.
\]
The main result in this section is:

\begin{proposition}\label{prop:deterministic}
\[
v_d \le n! V( \mathcal A_1, \mm{1}{d-1} ; \cdots ; \mathcal A_s, \mm{s}{d-1}; \mathcal A, 1; \beta_n, n-d )
.
\]
\end{proposition}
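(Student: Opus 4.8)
The plan is to recast $v_d$ as a count of cells of a mixed subdivision, to build the orthoplex into that subdivision so that the flag condition becomes combinatorial, and then to invoke the dictionary between fine regular mixed subdivisions and (semi‑)mixed volumes. For the first step, I would use the dimension formulas around Definition~\ref{def:mixed} ($\dim L_{\boldsymbol\xi}=\sum_i m_i(\boldsymbol\xi)$ and $\dim\Xi(L)=n-\sum_i m_i(\boldsymbol\xi)$) together with Lemma~\ref{lem:balancing}: a vertex of $X'_d$ is exactly a point $\Xi(L)\cap F_d$ for a $d$‑dimensional lower facet $L$ of type $(m_1(d-1),\dots,m_q(d-1)+1,\dots,m_s(d-1))$, where $q$ is the unique bumped index, and genericity of the flag (as in the proof of Theorem~\ref{th:connected}(d)) makes the intersection a single relative‑interior point. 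Hence $v_d=\sum_q N_q$, where $N_q$ counts the $(n-d)$‑dimensional dual cells $\Xi(L)$ of type‑$(m_\bullet(d-1)+e_q)$ facets that meet $F_d$ (and $N_q=0$ unless $m_q(d-1)<m_q$); note that the projection $\bar L=\bar L_1+\cdots+\bar L_s$ of such an $L$ already has dimension vector $(m_1(d-1),\dots,m_q(d-1)+1,\dots,m_s(d-1))$, i.e.\ exactly the ``$\mathcal A_i$‑part'' of a mixed cell of type $(m_1(d-1),\dots,m_q(d-1)+1,\dots;\,n-d)$ for the tuple $(\mathcal A_1,\dots,\mathcal A_s,\beta_n)$.

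The substance is to interpret ``$\Xi(L)$ meets $F_d$'' combinatorially. I would lift $n-d$ copies of $\beta_n$ by a family of liftings adapted to the flag ``at infinity'': the key is that the normal fan of $\beta_n$ is the fan of orthants (equivalently, $\beta_n$ is the $\ell_1$‑ball, polar to the $\ell_\infty$‑cube), so the $d$‑dimensional dual cells of the mixed cells of a fine subdivision of $\beta_n^{\,n-d}$ are ``$\ell_\infty$‑type'' cones which, in the limit $R\to\infty$, realize all generic affine $d$‑planes; the nested scaling $R^{\,n-d},R^{\,n-d-1},\dots,R$ defining $F_d$ is mirrored by introducing the $n-d$ copies of $\beta_n$ one at a time, and the pivoting identities of Lemma~\ref{lem:balancing} let one read off an $(n-d)$‑dimensional face $\bar R$ of the $\beta_n^{\,n-d}$‑subdivision from the $n-d$ equations cutting out $F_d$ along $\Xi(L)$. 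Granting this, $\bar L+\bar R$ is a mixed cell of type $(m_1(d-1),\dots,m_q(d-1)+1,\dots,m_s(d-1);\,n-d)$ of the common refinement of the two subdivisions, and $L\mapsto\bar L+\bar R$ is injective (the summand $\bar L$ is recovered from the non‑$\beta_n$ blocks). Therefore $N_q$ is bounded by the number of such mixed cells, which, by the dictionary between fine regular mixed subdivisions and semi‑mixed volumes together with the integrality of the BKK numbers of the sub‑faces, is at most $n!\,V(\mathcal A_1,m_1(d-1);\cdots;\mathcal A_q,m_q(d-1)+1;\cdots;\mathcal A_s,m_s(d-1);\beta_n,n-d)$.

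Summing over $q$ and using multilinearity of the mixed volume to combine the $s$ ``bumped'' slots into the single slot $\mathcal A$ — together with the monotonicity $\mathcal A_q\subseteq\mathcal A$, needed only if $\mathcal A$ is the Minkowski sum carrying multiplicities — then gives
\[
v_d=\sum_q N_q\le n!\,V(\mathcal A_1,m_1(d-1);\cdots;\mathcal A_s,m_s(d-1);\mathcal A,1;\beta_n,n-d),
\]
which is Proposition~\ref{prop:deterministic}.

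The main obstacle is the middle step: choosing the orthoplex liftings so that ``$\Xi(L)$ crosses the generic flat $F_d$ at infinity'' becomes equivalent to ``$\bar L$ completes to a mixed cell'', and checking that the completed Minkowski sum is an honest cell of the common refinement whose normalized volume is large enough to drive the counting bound. This is precisely where the orthant normal fan of $\beta_n$ is forced and where the genericity of the flag directions $Q_{d+1},\dots,Q_n$ is consumed; it is also the reason the deterministic bound features $\beta_n$ rather than $B^n$, the passage to $B^n$ (and the factor $2^{n-d}$) in Theorem~\ref{main}(b) coming from $\tfrac1{\sqrt n}B^n\subset\beta_n\subset B^n$ together with an average over $O(n)$. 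By comparison, the bookkeeping with multilinearity and the reduction $v_d=\sum_q N_q$ are routine.
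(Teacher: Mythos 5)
Your overall strategy is the same as the paper's: identify each vertex of $X'_d$ with a lower facet of ``bumped'' type $(m_1(d-1),\dots,m_q(d-1)+1,\dots,m_s(d-1))$ whose dual cell meets $F_d$, absorb the flag condition by adjoining orthoplexes to the tuple, bound the resulting count by a normalized semi-mixed volume (each cell contributing a multiple of $1/n!$), and finally merge the $s$ bumped slots into the single slot $\mathcal A$ by multilinearity. The reduction $v_d=\sum_q N_q$ and the final multilinearity step are correct and match the paper's conclusion.

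The genuine gap is exactly the step you flag as ``the main obstacle'': you never actually construct the lifting of the orthoplex(es) or prove that ``$\Xi(L)$ meets $F_d$ for $R$ large'' implies ``$L$ completes injectively to a mixed cell of the augmented subdivision.'' The appeal to the normal fan of $\beta_n$ being the fan of orthants and to dual cells ``realizing all generic affine $d$-planes in the limit'' is a heuristic, not an argument; as stated it does not produce a well-defined map, nor does it verify that the completed set of active constraints is linearly independent and that all remaining constraints stay strictly inactive. The paper fills this hole with a concrete two-parameter construction: a \emph{single} extra support $A_0=\{0,\pm\mathrm e_i\}$ with multiplicity $n-d$, lifted by $b(0,0)=0$ and $b(0,\pm\mathrm e_i)=S$; one first fixes $R_0$ so that for every candidate active set $W$ the inactive-constraint polynomials $f_{W,i,a}(R)$ are negative and non-increasing for $R\ge R_0$ (this is where finiteness and genericity are consumed), and then chooses $S>\|\xi(R_0)\|_\infty$. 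The induction on $e=d,\dots,n$ then trades one flag equation for one orthoplex vertex at the exact radius $R_e$ where the curve $\xi(R)$ exits the cube of side $S$, and the monotonicity of the $f_{W,i,a}$ is what guarantees the new set $W'$ still has $\Xi(W')\cap F_{e+1}(R')\ne\emptyset$ with all other constraints strictly inactive. Without this (or an equivalent) explicit mechanism — in particular without the quantitative interplay between $S$ and $R_0$ — your middle step remains an assertion, and the proposal is not a proof.
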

Towards the proof of Proposition~\ref{prop:deterministic} we extend the lifting $b$ to $A_0=\beta_n$
by
\[
b(0, 0) = 0
\hspace{2em}
\text{and}
\hspace{2em}
b(0, \pm \mathrm e_i) = S
\]
where $S>0$ will be determined later.

\begin{definition} Let $W \subset A_0 \sqcup \cdots \sqcup A_s$. The {\em set of viable points} for $W$
is
\[
\begin{split}
\Xi(W) = \left\{ \xi \in \mathbb R^n: 
\forall (i,a)\right. & \in A_0 \sqcup \cdots \sqcup A_s, 
\\
&
a \xi -\lambda_i(\xi) - b_i(a) \le 0 
\\
&
\left.
\text{with equality iff $(i,a) \in W$}
\right\}.
\end{split}
\]
\end{definition}
To every $W \subset A_0 \sqcup \cdots \sqcup A_s$ we associate the pair
\begin{equation}\label{active}
\Aact  = \Aact(W) = 
\left[ 
\begin{matrix}
\vdots & \vdots \\
-\mathrm e_i & a \\
\vdots & \vdots \\
0 & Q_{e+1} \\
\vdots & \vdots \\
0 & Q_n
\end{matrix}
\right]
\text{ and }
\bact = \bact(W) =
\left[ 
\begin{matrix}
\vdots \\
b_i(a)\\
\vdots \\
0 \\
\vdots \\
0 
\end{matrix}
\right]
\end{equation}
where $e = \# W - (s+1)$ and $(i,a)\in W$.
We say that $W$ is linearly independent if $\Aact(W)$ is invertible. 
In that case we also set $B=B(W) = \Aact(W)^{-1}$.

\begin{proof}[Proof of Proposition~\ref{prop:deterministic}]
We start by fixing $S$. To that end, we notice that for every linearly independent set $W$
such that 
\[
\exists S,\, R(S)>0\, \text{ s.t. }\  \forall R > R(S), \
\Xi(W) \cap F_e(R) \ne \emptyset,
\]
equation \eqref{ansatz} determines $\xi(R) \in  \Xi(W) \cap F_e(R)$ as
\[
\left[
\begin{matrix}
\lambda(R) \\
\xi(R)
\end{matrix}
\right]
=
B \,
\bact
+
B
\left[
\begin{matrix}
0 \\
\vdots \\
0 \\
R^{n-e} \\
\vdots\\
R
\end{matrix}
\right]
.
\]
The first term is a constant when $W \cap A_0 = \{0\}$, otherwise
it is \revb{affine in $S$}. The second term is a polynomial in $R$.

For any $(i,a) \not \in W$, write
\[
t(i,a)=
\left[
\mathrm e_i \ a \right] 
\left[
\begin{matrix}
\lambda(R) \\
\xi(R)
\end{matrix}
\right]
-
b_i(a)
=
f_{W,i,a}(R) + g_{W,i,a} S
\]
where $f_{W,i,a}$ is a polynomial in $R$ and $g_{W,i,a}$ is a constant,
vanishing when $W \cap A_0 = \{0\}$. By hypothesis, the expression above
is negative for $R$ large enough. This means that the leading term
of $f_{W,i,a}$ is strictly negative.

As there are finitely many $W,i$ and $a$, there exists a uniform constant $\revb{R_d} > 0$
such that when $R\ge \revb{R_d}$, \revb{each of the} $f_{W,i,a}(R)$ \revb{for all $d \le e \le n-1$} is strictly negative and non-increasing. This constant \revb{$R_d$} is independent of $S$.
Now pick $S$ large enough so that if $W \cap A_0 = \{0\}$, $\| \xi(\revb{R_d}) \|_{\infty} < S$.
Notice that if $(0,0) \in W$ and $(0,\pm \mathrm e_i) \in W$ then forcefully $\xi_i = \pm S$.

\medskip
\par
For each $d \le e \le n$, let $\mathcal W_e$ be the class of all subsets $W \subset A_0 \sqcup \cdots
\sqcup A_s$ such that
\begin{enumerate}[(a)]
\item $(0,0) \in W$.
\item $W$ is linearly independent.
\item $\# W = s+e+1$.
\item $\#W \cap A_0 = 1 + e - d$.
\item $\#W \cap A_i \ge 1 + \mm{i}{d-1}$, \text{and}
\item For $R$ large enough, $\Xi(W) \cap F_e(R) \ne \emptyset$. This \revb{holds} in
particular for some $R$ with $\| \xi(R)\|_{\infty} \le S$. 
\end{enumerate}

{\bf Induction hypothesis in $e \in \{d, d+1, \dots, n\}$:} For every vertex $\xi(R)$ of $\revc{G}_{d}$, there is one and
only one $W \in \mathcal W_e$ with $\xi(R) \in \Xi(W)$ for $R>\revb{R_e}$ \revb{for} some $\revb{R_e}$ with $\| \xi(\revb{R_e})\|_{\infty} \le S$.

{\bf Base step $e=d$:} Let $W = \{ (0,0) \} \cup \{ (i,a) \in A_i, i \ge 1: 
-\lambda_i(\xi(R)) + a \xi(R) - b(i,a) = 0 \}$. For this choice of $W$, $g_{W,i,a}=0$
at all inactive constraints. 
By construction of $\revc{G}_d$, $W \in \mathcal W_d$ and $\xi(R) \in \Xi(W)$.

{\bf Induction step:} Let $W \in \mathcal W_e$, $d<e<n$. Assume after reshuffling indexes and changing signs that
$W \cap A_0 = \{ 0, \mathrm e_1, \dots, \mathrm e_{e-d} \}$.
For some value of $\revb{R_{e+1} > R_e}$, the curve $(\xi_{e-d+1}(R), \dots, \xi_{n}(R))$ will exit the
hypercube $\max_{i>e-d} ( |\xi_i|) = S$. Say this happens for $\xi_{e-d+1}(\revb{R_{e+1}})$. Then we
set $W'= W \cup \{ (0, \mathrm e_{e-d+1}) \}$. 

The point $\xi(\revb{R_{e+1}})$ belongs to $\Xi(W')$. 
By construction, $W'$ is linearly independent. So we can construct $\Aact(W')$ and $B(W')$, 
and hence all the $f_{W',i,a}$ and $g_{W',i,a}$ for inactive constraints $(i,a) \not \in W'$.
With $R \ge R_{e+1}$, set:
\[
\left[
\begin{matrix}
\lambda'(\revb{R})
\\
\xi'(\revb{R}) 
\end{matrix}
\right]
=
B(W')\  \bact(W') 
+
B(W')
\left[
\begin{matrix}
0 \\
\vdots \\
0 \\
\revb{R}^{n-e-1} \\
\vdots\\
\revb{R}
\end{matrix}
\right].
\]
Then for all inactive constraints,
\[
-[\mathrm e_i ,   a] 
\left[
\begin{matrix}
\lambda'(\revb{R})
\\
\xi'(\revb{R}) 
\end{matrix}
\right]
-b(a)
=
f_{W',i,a}(\revb{R}) + g_{W',i,a}\revb{(S)}
.\]
Since $\revb{R} \ge \revb{R_{e+1} > R_e}$, $f_{W',i,a}(R') < 0$ is negative and non-increasing. 
Thus
\[
f_{W',i,a}(\revb{R}) + g_{W',i,a}\revb{(S)}
\le
f_{W',i,a}(\rev{R_{e+1}}) + g_{W',i,a}\revb{(S)} < 0
.
\]
It follows that for $\revb{R \ge R_{e+1}}$,
$\Xi(W') \cap F_e(R') \ne \emptyset$. Moreover,
$\| \xi'(\revb{R_{e+1}}) \|_{\infty} = S$.

{\bf Conclusion}. By induction, we can associate injectively
to each vertex of $\revc{G}_d$, an element of $\mathcal W_n$ which is a mixed
cell for one $(A_0, n-d; A_1, \mm{1}{d-1} + \delta_{1p};
\cdots; A_s, \mm{s}{d-1} + \delta_{sp})$. The volume of
such a mixed cell is an integral multiple of $1/n!$. 
Proposition~\ref{prop:deterministic} follows.
\end{proof}

\section{Average complexity analysis}\label{sec:average}

Proposition~\ref{prop:deterministic} holds for a generic
lifting $b$ and for a sufficiently generic flag 
$F_0 \subset \cdots \revb{\subset} F_n$.
Now we assume that the orthogonal group $O(n)$ is endowed
with the Haar probability measure. \revb{For each} $Q \in O(n)$, 
\revb{let $Q_d$ be the $d$-th row of $Q$. Let $R$ be an
arbitrarily large positive
real number.
As before, the flag of affine subspaces is:}
$F_n = \mathbb R^n$,
\[
F_{d-1} = \{ \xi \in F_{d}: Q_d \xi = R^{n-d+1}\}
.
\]

Then we set $\bar v_d = \avg{Q \in O(n)} v_d$. The following
result should be compared with Proposition~\ref{prop:deterministic}:

\begin{proposition}\label{prop:average}
Suppose that $V(\mathcal A_1, m_1; \cdots ; \mathcal A_s, m_s) \ne 0$.
Then,
\[
\bar v_d \le \frac{n!}{2^{n-d}} V\left( \mathcal A_1, \mm{1}{d-1} ; \cdots ; \mathcal A_s, \mm{s}{d-1}; \mathcal A, 1; B^n, n-d \right)
.
\]
\end{proposition}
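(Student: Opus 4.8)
The plan is to re-run the proof of Proposition~\ref{prop:deterministic} with the auxiliary orthoplex \emph{carried along by the flag}, and then integrate out $Q$. Concretely, in place of the coordinate orthoplex $A_0=\beta_n$ I would use the rotated orthoplex whose vertices are $\pm Q_1,\dots,\pm Q_n$ (the columns of the Haar‑random $Q$ defining $F_0\subset\cdots\subset F_n$), extending the lifting by $b(0,0)=0$ and $b(0,\pm Q_i)=S$ with $S$ large. The inductive construction over $e=d,d+1,\dots,n$ then associates to each vertex of $X'_d$, injectively, a mixed cell of the tuple $(\mathcal A_1,m_1(d-1);\cdots;\mathcal A_s,m_s(d-1);\mathcal A,1;Q\beta_n,n-d)$. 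The key gain over the purely deterministic version is that, since at step $e$ it is precisely the flag direction $Q_{e+1}$ that is released when passing from $F_e$ to $F_{e+1}$, \emph{no reshuffling of coordinates is needed}: the $Q\beta_n$–part of the resulting mixed cell is a corner simplex $\conv{\{0,\epsilon_{d+1}Q_{d+1},\dots,\epsilon_nQ_n\}}$, an $(n-d)$–simplex of $(n-d)$–volume $1/(n-d)!$ spanning the fixed subspace $L_Q=\operatorname{span}(Q_{d+1},\dots,Q_n)$, with only the sign vector $\epsilon\in\{\pm1\}^{\,n-d}$ varying.

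Next I would exploit that this $(n-d)$–dimensional face lies inside a subspace. Writing $L_Q^{\perp}=\operatorname{span}(Q_1,\dots,Q_d)$ and $\pi=\pi_{L_Q^{\perp}}$ for the orthogonal projection onto it, the standard slicing/projection identity for a mixed volume in which one body, taken with multiplicity $m=n-d$, is confined to an $m$–dimensional subspace gives, for such a simplex $\sigma$,
\[
V\bigl(\mathcal A_1,m_1(d-1);\cdots;\mathcal A,1;\sigma,m\bigr)=\binom{n}{m}^{-1}\vol(\sigma)\;V^{(L_Q^{\perp})}\bigl(\pi\mathcal A_1,m_1(d-1);\cdots;\pi\mathcal A,1\bigr),
\]
where $V^{(L_Q^{\perp})}$ denotes the $d$–dimensional mixed volume of the projected tuple. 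Feeding this into the cell count of the first paragraph (and handling the bookkeeping of the $2^{\,n-d}$ sign vectors $\epsilon$, which partition the vertices of $X'_d$ and all project to the \emph{same} $d$–dimensional tuple), one arrives, for a.e.\ $Q$, at an inequality of the shape $v_d\le d!\,V^{(L_Q^{\perp})}\bigl(\pi\mathcal A_1,m_1(d-1);\cdots;\pi\mathcal A,1\bigr)$.

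Finally I would average over $Q$. As $Q$ runs over $O(n)$ with Haar measure, $L_Q^{\perp}$ is uniformly distributed on the Grassmannian $G(n,d)$, so by the mixed Kubota formula
\[
\avg{L\in G(n,d)}\,V^{(L)}\bigl(\pi_LK_1,\dots,\pi_LK_d\bigr)=\frac{\kappa_d}{\kappa_n}\,V\bigl(K_1,\dots,K_d,B^n,\dots,B^n\bigr)\qquad(\kappa_j=\vol(B^j),\ n-d\ \text{balls}),
\]
the bound becomes $\bar v_d\le d!\,(\kappa_d/\kappa_n)\,V(\mathcal A_1,m_1(d-1);\cdots;\mathcal A,1;B^n,n-d)$. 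It then remains to verify the purely numerical inequality $d!\,\kappa_d/\kappa_n\le n!/2^{\,n-d}$, i.e.\ $2^{\,n-d}d!\,\kappa_d\le n!\,\kappa_n$; this holds with equality at $d=n$ and with room to spare for $d<n$ (for instance at $d=n-1$ it reduces to $\kappa_{n-1}/\kappa_n\le n/2$), and it yields exactly the claimed factor $2^{-(n-d)}$.

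The step I expect to be the genuine obstacle is the first one, and specifically the combinatorial bookkeeping that produces the coefficient $d!$ rather than $2^{\,n-d}d!$: one must check, from the inductive construction, that it is legitimate to replace the orthoplex by \emph{one} corner simplex per sign vector — equivalently, that the $2^{\,n-d}$ sign–families of vertices of $X'_d$ need not all be charged against the projected $d$–dimensional subdivision with full multiplicity. If this point resists a clean treatment, the fallback is to drop the orthoplex device and bound $\avg{Q}\#\{\text{vertices of }X'_d\}$ directly by a Crofton/kinematic estimate: a vertex of $X'_d$ is a point where the $d$–flat $F_d$ meets the codimension–$d$ tropical stratum $\{\xi:\forall i\ m_i(\xi)\ge m_i(d-1),\ \sum_i m_i(\xi)=d\}$, and the expected number of such intersections, for $F_d$ placed at infinity in $n-d$ Haar‑random directions, is governed by the $(n-d)$–th mixed quermassintegral of the relevant recession data, namely $V(\mathcal A_1,m_1(d-1);\cdots;\mathcal A,1;B^n,n-d)$ up to the dimensional constant in the spherical Crofton formula — which is again where the factor $2^{-(n-d)}$ originates.
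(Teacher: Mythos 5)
Your main route has a genuine gap, and it sits exactly where you suspected — but the problem is worse than a bookkeeping issue: the intermediate inequality $v_d\le d!\,V^{(L_Q^{\perp})}(\pi\mathcal A_1,m_1(d-1);\cdots;\pi\mathcal A,1)$ is false, and so is the final constant $d!\,\kappa_d/\kappa_n$. The deterministic count in Proposition~\ref{prop:deterministic} works only because the configurations are integral, so every fine mixed cell has volume at least $1/n!$; orthogonal projection onto $L_Q^{\perp}$ destroys integrality, the projected cells can be arbitrarily thin, and the number of cells is then not controlled by $d!$ times the projected mixed volume. Concretely, take $n=s=2$, $m_1=m_2=1$, $d=1$, and $A_1=A_2=\{0,1\}^2$. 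Each tropical curve has unbounded rays in the four directions $\pm\mathrm e_1,\pm\mathrm e_2$, exactly two of which cross the far line $F_1=\{Q_2\xi=Rr_2\}$ for every $Q$, so $v_1=4$ identically and $\bar v_1=4$. The Proposition gives $\frac{2!}{2}V(\mathcal A,B^2)=\mathrm{per}([0,2]^2)/2=4$, i.e.\ it is exactly tight here; your pointwise bound gives $\vol_1(\pi_{Q_1}[0,2]^2)\le 2\sqrt2<4$ for every $Q$, and your averaged bound gives $\frac{\kappa_1}{\kappa_2}V(\mathcal A,B^2)=8/\pi<4$. So no amount of care in the first step can rescue the chain: any argument producing a constant strictly smaller than $n!/2^{n-d}$ must fail. (Your numerical verification that $d!\,\kappa_d/\kappa_n\le n!/2^{n-d}$, with strict inequality for $d<n$, was in fact a warning sign.)

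The paper's proof is closer in spirit to your fallback, but the factor $2^{-(n-d)}$ is not a Crofton constant and no Kubota formula is used. Each vertex of $X_d'$ is charged to a set $W$ of active constraints whose dual $\Xi(W)$ is an $(n-d)$-dimensional cone, so that $\bar v_d\le\sum_W\Pr_Q[\Xi(W)\cap F_{n-d}\ne\emptyset]$. The per-stratum probability is bounded by $2^{-(n-d)}$ via a reflection argument: flipping the signs of any nonempty subset of $Q_{d+1},\dots,Q_n$ yields another flag, and if two of the $2^{n-d}$ reflected flags both met the cone $\Xi(W)$ then $\Xi(W)$ would contain a line, which by Lemma~\ref{flat} forces the mixed volume to vanish; hence at most one sign pattern succeeds. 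The number of strata is then bounded, after inserting $\max_{Q_1,\dots,Q_d\in B^n}\det\Aact(W,Q)\ge1$, by $n!\,V(\mathcal A_1,\mu_1;\cdots;\mathcal A_s,\mu_s;B^n,n-d)$ — the ball enters through this supremum of determinants over unit vectors, not through an average over the Grassmannian. If you want to salvage an integral-geometric proof, the basic object must be the hitting probability of each individual stratum (which is bounded by $2^{-(n-d)}$ and can come arbitrarily close to it), not the volume of a projected subdivision.
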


We will use the following fact to establish Proposition~\ref{prop:average}:
\begin{lemma}\label{flat}
Suppose that
\revb{the topological closure $\overline{\Xi(W)}$ of $\Xi(W)$} contains a line. Then $V(\mathcal A_1, m_1; \cdots;$ $\mathcal A_s, m_s) = 0$.
\end{lemma}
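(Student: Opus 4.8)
The plan is to show that if $\Xi(W)$ contains a line, then every support $A_i$ with $i\ge 1$ is forced to lie in a single hyperplane orthogonal to the direction of that line; once this is in hand, the vanishing of the (semi-)mixed volume is immediate from its very definition.

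First I would fix a line $\ell=\{\xi_0+tv:t\in\mathbb R\}\subseteq\Xi(W)$ with $v\ne 0$, and for each $i\in\{1,\dots,s\}$ set $W_i=\{a\in A_i:(i,a)\in W\}$. Since $\lambda_i$ is a maximum over the (nonempty, finite) set $A_i$ it is attained, and by the ``equality iff $(i,a)\in W$'' clause in the definition of $\Xi(W)$ the set $W_i$ is nonempty; pick $a^{(i)}\in W_i$. For every $t\in\mathbb R$ and every $a'\in A_i$,
\[
a^{(i)}(\xi_0+tv)-b_i(a^{(i)})=\lambda_i(\xi_0+tv)\ \ge\ a'(\xi_0+tv)-b_i(a'),
\]
which rearranges to $t\,(a^{(i)}-a')\cdot v\ \ge\ (\text{a constant independent of }t)$. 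Letting $t\to+\infty$ and $t\to-\infty$ forces $(a^{(i)}-a')\cdot v=0$. Hence every $a'\in A_i$ satisfies $a'\cdot v=c_i$ with $c_i:=a^{(i)}\cdot v$, so $A_i$ — and therefore $\mathcal A_i=\conv{A_i}$ — lies in the hyperplane $\{x:x\cdot v=c_i\}$.

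Then, since $x\mapsto x\cdot v$ is linear, the Minkowski combination $t_1\mathcal A_1+\cdots+t_s\mathcal A_s$ lies in the hyperplane $\{x:x\cdot v=t_1c_1+\cdots+t_sc_s\}$ for all $t_1,\dots,t_s\ge 0$, so its $n$-dimensional volume is identically zero. Expanding the volume polynomial used to define the semi-mixed volume with $m_i$ copies of each $t_i$, every relevant mixed partial derivative vanishes, so $V(\mathcal A_1,m_1;\cdots;\mathcal A_s,m_s)=0$. (Equivalently one may invoke the classical fact that a mixed volume is positive iff one can select segments with linearly independent directions, one in each body — impossible here, as all such segments have direction in $v^{\perp}$, which is $(n-1)$-dimensional.)

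The argument is short; the one point that needs care is the use of the \emph{full} line — both $t\to+\infty$ and $t\to-\infty$ — in the middle step, since a half-line would only give one of the two inequalities $(a^{(i)}-a')\cdot v\ge 0$ or $\le 0$ and would not pin down $(a^{(i)}-a')\cdot v=0$. This is exactly why the hypothesis is ``$\Xi(W)$ contains a line'' rather than merely ``$\Xi(W)$ is unbounded''. Everything else is bookkeeping.
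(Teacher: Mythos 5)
Your proof is correct and follows essentially the same route as the paper's: along the full line the defining inequalities of $\Xi(W)$ force every $A_i$ into an affine hyperplane orthogonal to the line's direction $v$, so the Minkowski sum is degenerate and the semi-mixed volume vanishes. Your version is just slightly more explicit than the paper's (choosing one active constraint per block and spelling out the final volume step), but the underlying argument is identical.
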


\begin{proof}
Let the line be $\xi_0 + t \dot \xi$. Without loss of generality,
assume that $a \dot \xi=0$ for all active constraints $(i,a)\in W$.
For inactive constraints, 
\[
-\lambda_i + a \xi_0 + t a \dot \xi\ \revb{\le}\ b(i,a) \ \forall t
.
\]
This is only possible if $a \dot \xi = 0$. So $A_1, \dots, A_s
\subset \dot \xi^{\perp}$.
\end{proof}

\begin{proof}[Proof of Proposition \ref{prop:average}]
\revb{The case $d=n$ follows from Proposition~\ref{prop:deterministic} so we assume $d<n$.}
Let $\mu_1+\cdots+\mu_s = d$. We define
$\mathcal W (\mu_1, \dots, \mu_s)$ as the class
of subsets $W \in A_1 \sqcup \cdots \sqcup A_s$
with $\# W \cap A_i = \mu_i+1$ and such that
$\Xi(W)$ contains \revb{an affine cone} of dimension $n-d$.

Every vertex of $\revc{G}_d$ corresponds to such a 
subset for some choice of $\mu_i \ge \mm{i}{d-1}$.
Indeed, the curve $\xi(R)$ obtained by varying $R$
large enough cannot be contained in an hyperplane
of codimension $d+1$.
So what we need to do is to count the average number
of elements of $\mathcal W (\mu_1, \dots, \mu_s)$.

Let $\Aact(W,Q)$ be as in \eqref{active} for a particular
choice of $Q$. Then,
\[
\max_{Q \in O(n)} \det (\Aact(W,Q))
=
\max_{\revb{Q_{d+1}, \dots, Q_n} \in B^n} \det (\Aact(W,Q))
\ge 
1
.
\]

Also, let $\mathcal W(\mu_1, \dots, \mu_s, Q)$ be the
set of $W \in \mathcal W(\mu_1, \dots, \mu_s)$ such that
$\Xi(W) \cap F_d(R)$ is not empty, for $R$ sufficiently large. 
\revb{Then,}

{\small
\begin{eqnarray*}
\avg{Q \in O(n)}\hspace{-2em}
&&\# \mathcal W (\mu_1, \dots, \mu_s; Q) \le \\
&\le&
\sum_{W \in  \mathcal W (\mu_1, \dots, \mu_s)}
\mathrm{Prob}[\Xi(W) \revb{\cap F_{n-d}(R)} \ne \emptyset ]
\\
&\le&
\sum_{W \in  \mathcal W (\mu_1, \dots, \mu_s)}
\left(
\mathrm{Prob}[\Xi(W) \revb{\cap F_{n-d}(R)} \ne \emptyset ]
\max_{\revb{Q_{d+1}, \dots, Q_n} \in B^n} \det (\Aact(W,Q))
\right)
\\
&\le&
\left(\max_{\revb{W 
}} \mathrm{Prob}[\Xi(W) \revb{\cap F_{n-d}(R)} \ne \emptyset]
\right)
\left(\sum_{W 
}
\max_{\revb{Q_{d+1}, \dots, Q_n} \in B^n} \det \Aact(W,Q)
\right)
\end{eqnarray*}}
The second term is bounded above by
\[
n!V(\mathcal A_1, \mu_1; \cdots; \mathcal A_s, \mu_s; B^n,n-d)
.
\]

We claim now that
\[
\mathrm{Prob}[\Xi(W) \cup F_{n-d}\revb{(R)} \ne \emptyset ]
\le
\frac{1}{
2^{n-d}}
.
\]
Indeed, let $W \in \mathcal W (\mu_1, \dots, \mu_s)$ and
admit that there is $Q$ such that
\[
\xi(R) \in \Xi(W) \cap F_{n-d}(R) 
\]
exists for $R$ large enough. Let
\[
I_n \ne
\Sigma =
\left[
\begin{matrix}
I_d \\
& \pm 1 \\
& & \ddots \\
& & & \pm 1
\end{matrix}
\right]
\]
be a non-trivial sign change matrix, and let $Q'= \Sigma'Q$.
Let $F_{n-d}'(R)$ be the induced flag. Suppose that
$\xi'(R) \in \Xi(W) \cap F_{n-d}'(R)$.

Since $\Xi(W)$ is a \revb{convex, the line segment $[\xi(R),\xi'(R)]$
is contained in $\Xi(W)$. Making $R \rightarrow \infty$, we conclude that
$\overline{\Xi(W)}$ contains a straight line. Then 
Lemma~\ref{flat} implies that the mixed volume of the
$\mathcal A_i$ vanishes, contradiction.}
Therefore, 
$\Xi(W) \cap F_{n-d}'(R) = \emptyset$.

\revb{The multiplicative group of all the sign matrices $\Sigma$ as
above preserves the Haar metric in $O(n)$, and has order $2^{n-d}$.
Since only one of the $\Sigma Q$ can have $\Xi(W) \cap F_{n-d}(R)
\ne \emptyset$, we deduce that 
\[
\mathrm{Prob}[\Xi(W) \cup F_{n-d}\revb{(R)} \ne \emptyset ]
\le
\frac{1}{
2^{n-d}}
.
\]}
\end{proof}

\section{Implementation notes}
\label{sec:implementation}

In this section, I describe several techniques that were used
in the software and have an effect on the experiments. 
\paragraph{\bf Hermite normal form.} The polytopes $A_1, \dots, A_n$ are translated so that each one
has a vertex equal to zero. Then, the program computes a basis for the lattice
generated by $\cup(A_i)$ through a Hermite normal form factorization. 
The supports $A_i$ are then rewritten into lattice
basis coordinates. This may reduce the mixed volume in families of examples
such as {\em Cyclic-n} or {\em Gridanti-n}. If the mixed volume algorithm is
coupled to a path-following algorithm, this technique 
reduces the number of paths
to track.

\paragraph{\bf Hash function.}
In Section~\ref{sec:algorithm}, we associated a unique real number
to each possible lower \revc{face}. 
To a lower \revc{face} $L_{\xi}$ with active constraints labeled by integers
$l_1< \dots< l_{s+d}$
we associated the hash value
\[
\sigma (L_{\xi}) = \sum_{j=1}^{s+d} l_j H_j \mod 1 
\]  
with $H_j$ independently distributed random numbers in $[0,1]$. In the
program, those are replaced by pseudo-random floating point numbers. 
Since pseudo-random 
numbers are a good proxy
for irrational numbers, we expect the values of $\sigma$ to be well spread
from each other \cite{KnuthV3}*{Th.S Sec. 6.4}. 
The reason for choosing the $l_j$ in increasing order is to ensure that
the floating point value associated to a \revc{face} $L_{\xi}$ is always the
same.

\paragraph{\bf Parallelization}
If $N$ processors are available, the $k$-th {\em node} or processor ($0 \le k < N$) is in charge of lower \revc{faces}
$L_{\xi}$ for $k \le N \sigma(L_{\xi}) < k$. The sets $\Vexplore$ and $\Vknown$
are distributed between the processors: each processor stores the 
lower \revc{faces} in its range as a balanced tree. 

Below is a crude version of the parallel algorithm, 
running on node $k$ out of $N$. The new parallel
operations are explained afterwards.

\centerline{\boxed{
\begin{minipage}{\textwidth - 2em}
\begin{trivlist}
\item {\bf Algorithm} {\sc AllMixedCells} (Parallel)\label{allmixedcells2}
\item {\bf If} $k \le N \sigma(L_{F_0}) < k+1$
\subitem {\bf then} $\Vexplore \leftarrow \{ L_{F_0} \}$.
\subitem {\bf else} $\Vexplore \leftarrow \emptyset$.
\item $\Vknown \leftarrow \emptyset$
\item $\mathrm{work\_to\_do} \leftarrow \mathbf{True}$.
\item {\bf While } $\mathrm{work\_to\_do}$
\subitem $L_{\xi} \leftarrow \emptyset$.
\subitem {\bf While }$\Vexplore \ne \emptyset$ {\bf and} $L_{\xi}=\emptyset$,
\subsubitem Remove some $L_{\xi}$ from $\Vexplore$.
\subsubitem {\bf If } {\sc Visited}$(\Vknown, L_{\xi})$ {\bf then} $L_{\xi}=\emptyset$.
\subitem {\bf If }$L_{\xi} \ne \emptyset$,
\subsubitem {\bf If} $L_{\xi}$ is a mixed cell, {\bf then} output $L_{\xi}$.
\subsubitem {\bf For} each neighbor $L_{\xi'}$ of $L_{\xi}$ in $\Gamma$, 
      {\bf send} $L_{\xi'}$ to processor $\lfloor \sigma(L_{\xi'}) / N \rfloor$.
\subsubitem Discard $L_{\xi}$.
\subitem {\bf Wait} until all sent messages are available to the recipient node.
\subitem {\bf Receive} all $L_{\xi}$'s sent to node $k$ and insert them in $\Vexplore$.
\subitem $\mathrm {work\_to\_do} \leftarrow (\Vexplore \not = \emptyset)$
\subitem {\bf Reduce} ( $\mathrm{work\_to\_do}$, {\bf or}, $0 \le k < N$) 
\end{trivlist}
\end{minipage}
}}
\medskip
\par
Parallel machines communicate by sending messages between processors. Operations
{\bf send} and {\bf receive} refer to a message from a given processor, sent to 
a specified processor. Each node can check whether 
a message went through, that is
whether it is available to the recipient. 
It can check for incoming messages.
\par
The {\bf reduce}
operation (modeled on {\sc MPI\_Allreduce}) takes three arguments,
{\em variable}, {\em operation} and {\em range}. 
It allows to efficiently compute an aggregated value
out of a variable at each node in the range, for the given operation. 
In the example above, the variables work 
at each processor are `or-ed', and the result is propagated to all the nodes in the range.

\paragraph{\bf The choice of the function $\mm{i}{d}$}
The choice of the $\mm{i}{d}$ makes a difference. I opted to reorder the $A_i$'s
by increasing dimension, then increasing volume, then increasing number of points.
In the unmixed case, $\mm{i}{d} = 2$ for $i \le d$ and $\mm{i}{d}=1$ when $i>d$.

\paragraph{\bf Numerical stability}
Numerical stability is an issue. Instead of computing rank-1 updates, I opted for
producing the matrix $B$ independently for each lower \revc{face}. Then I stipulated a value of $\epsilon = 10^5 \epsilon_M$
where $\epsilon_M$ is the `machine epsilon' for double precision. The value of
$B$ is always assumed correct, in the sense that
\[
\|B_{\mathrm{computed}} - B_{\mathrm{true}} \|_{\infty}  \le \epsilon \|B_{\mathrm{computed}} \|_{\infty}
.
\]

The bounds for the condition number provided by the Lapack library 
are not always correct. Therefore, I estimated the condition
number of $\Aact$ by 
$\|\Aact\|_{\infty}$ $\|B_{\mathrm{computed}}\|_{\infty}$. 
This was used to bound the forward error 
$\|B_{\mathrm{computed}} - B_{\mathrm{true}} \|_{\infty}$.

\paragraph{\bf Recovery step:} 
When necessary, the precision of the matrix $B$ can be improved by Newton
iteration, where the residual $\Aact B - I$ is computed using long double or quadruple precision using Newton iteration \cite{Demmel}*{Sec.2.5}. This is a very rare event.

\paragraph{\bf Nonstandard numbers:}
Numerators and denominators for each of the $t_l(i,a)$ can be computed with 
absolute error no more than $2 (1 + \max d_i) (n+s) \epsilon$. If the computed absolute
value of the numerator (resp. denominator) of $t_l(i,a)$ is below that bound, 
it is assumed to be zero. Similarly, if $|t_l(i,a) - t_l(i', a')|< \epsilon \| [-e_i\, a] B \mathrm e_j \|^{-1}$,
those quantities are deemed equal.

\paragraph{\bf Random walk:} I experimented with a random walk strategy to find all the connected components of $\revc{G}_{n}$ in $\Gamma$\rev{, instead of the
deterministic walk through all of $\Gamma$}. Each connected
component found was explored by the {\em AllMixedCells} algorithm.
Then other random paths were explored until an heuristic stopping 
criterion was met. The results are discussed on section~\ref{sec:numerical}.

\section{Numerical results}
\label{sec:numerical}
\begin{table}
\centerline{\resizebox{\textwidth}{!}{
\begin{tabular}{||l|rrrl|ll|l||}
\hline \hline
Example       &$ n    $&$  \sum E_i $&    Visited \revc{faces} &$   $T$        $&  AVG $RT$&      Std dev    &   $RT/T$\\ 
\hline
Cyclic13      &$    13$&$      133$&$     3.07\times 10^{06}$&$  7.51\times 10^{10}$&$ 2.06\times 10^{02}$&$      7.73\times 10^{00}$&$   2.74\times 10^{-09}$\\       
Cyclic14      &$    14$&$      157$&$     1.10\times 10^{07}$&$  3.66\times 10^{11}$&$ 8.50\times 10^{02}$&$      2.79\times 10^{01}$&$   2.33\times 10^{-09}$\\       
Cyclic15      &$    15$&$      183$&$     4.40\times 10^{07}$&$  1.96\times 10^{12}$&$ 4.07\times 10^{03}$&$      1.22\times 10^{02}$&$   2.08\times 10^{-09}$\\       
\hline
Noon18        &$    18$&$      324$&$     8.10\times 10^{06}$&$  8.92\times 10^{11}$&$ 1.23\times 10^{03}$&$      1.42\times 10^{02}$&$   1.38\times 10^{-09}$\\       
Noon19        &$    19$&$      361$&$     1.64\times 10^{07}$&$  2.24\times 10^{12}$&$ 2.87\times 10^{03}$&$      4.08\times 10^{02}$&$   1.28\times 10^{-09}$\\       
Noon20        &$    20$&$      400$&$     3.37\times 10^{07}$&$  5.62\times 10^{12}$&$ 6.46\times 10^{03}$&$      6.53\times 10^{02}$&$   1.15\times 10^{-09}$\\       
\hline
Chandra18     &$    18$&$      307$&$     3.32\times 10^{06}$&$  3.45\times 10^{11}$&$ 5.18\times 10^{02}$&$      3.63\times 10^{01}$&$   1.50\times 10^{-09}$\\       
Chandra19     &$    19$&$      343$&$     7.17\times 10^{06}$&$  9.27\times 10^{11}$&$ 1.27\times 10^{03}$&$      5.44\times 10^{01}$&$   1.37\times 10^{-09}$\\       
Chandra20     &$    20$&$      381$&$     1.57\times 10^{07}$&$  2.50\times 10^{12}$&$ 3.08\times 10^{03}$&$      1.96\times 10^{02}$&$   1.23\times 10^{-09}$\\       
Chandra21     &$    21$&$      421$&$     3.46\times 10^{07}$&$  6.67\times 10^{12}$&$ 7.58\times 10^{03}$&$      4.59\times 10^{02}$&$   1.14\times 10^{-09}$\\       
\hline
Katsura15     &$    16$&$      200$&$     5.03\times 10^{06}$&$  2.73\times 10^{11}$&$ 5.57\times 10^{02}$&$      6.15\times 10^{01}$&$   2.04\times 10^{-09}$\\       
Katsura16     &$    17$&$      225$&$     1.41\times 10^{07}$&$  9.72\times 10^{11}$&$ 1.88\times 10^{03}$&$      2.69\times 10^{02}$&$   1.93\times 10^{-09}$\\       
Katsura17     &$    18$&$      252$&$     3.55\times 10^{07}$&$  3.05\times 10^{12}$&$ 5.31\times 10^{03}$&$      5.57\times 10^{02}$&$   1.74\times 10^{-09}$\\       
Katsura18     &$    19$&$      280$&$     8.30\times 10^{07}$&$  8.81\times 10^{12}$&$ 1.42\times 10^{04}$&$      9.02\times 10^{02}$&$   1.61\times 10^{-09}$\\       
\hline
Gaukwa7       &$    14$&$       98$&$     9.78\times 10^{05}$&$  2.01\times 10^{10}$&$ 7.00\times 10^{01}$&$      4.83\times 10^{00}$&$   3.48\times 10^{-09}$\\       
Gaukwa8       &$    16$&$      128$&$     1.07\times 10^{07}$&$  3.73\times 10^{11}$&$ 1.02\times 10^{03}$&$      9.30\times 10^{01}$&$   2.73\times 10^{-09}$\\       
\hline
Vortex5       &$    10$&$      120$&$     1.37\times 10^{05}$&$  1.83\times 10^{09}$&$ 7.62\times 10^{00}$&$      6.47\times 10^{-01}$&$   4.16\times 10^{-09}$\\       
Vortex6       &$    15$&$      240$&$     7.53\times 10^{07}$&$  4.39\times 10^{12}$&$ 7.25\times 10^{03}$&$      9.17\times 10^{02}$&$   1.65\times 10^{-09}$\\       
\hline
N-body5       &$    10$&$      130$&$     2.94\times 10^{05}$&$  4.30\times 10^{09}$&$ 1.42\times 10^{01}$&$      1.78\times 10^{00}$&$   3.31\times 10^{-09}$\\       
\hline
Gridanti3     &$    18$&$       81$&$     7.10\times 10^{04}$&$  1.93\times 10^{09}$&$ 9.26\times 10^{00}$&$      4.28\times 10^{-01}$&$   4.80\times 10^{-09}$\\       
Gridanti4     &$    32$&$      144$&$     3.08\times 10^{07}$&$  4.62\times 10^{12}$&$ 1.14\times 10^{04}$&$      5.98\times 10^{02}$&$   2.47\times 10^{-09}$\\       
\hline
Sonic8        &$    28$&$      407$&$     6.78\times 10^{06}$&$  2.21\times 10^{12}$&$ 2.49\times 10^{03}$&$      4.37\times 10^{02}$&$   1.13\times 10^{-09}$\\       
Sonic9        &$    30$&$      452$&$     1.38\times 10^{07}$&$  5.70\times 10^{12}$&$ 5.89\times 10^{03}$&$      9.25\times 10^{02}$&$   1.03\times 10^{-09}$\\       
Sonic10       &$    32$&$      497$&$     2.34\times 10^{07}$&$  1.21\times 10^{13}$&$ 1.18\times 10^{04}$&$      1.56\times 10^{03}$&$   9.73\times 10^{-10}$\\       
\hline
Graphmodel6   &$    21$&$       48$&$     3.37\times 10^{04}$&$  7.29\times 10^{08}$&$ 6.50\times 10^{00}$&$      6.65\times 10^{-01}$&$   8.92\times 10^{-09}$\\       
Graphmodel7   &$    28$&$       63$&$     4.14\times 10^{05}$&$  2.08\times 10^{10}$&$ 1.11\times 10^{02}$&$      7.52\times 10^{00}$&$   5.33\times 10^{-09}$\\       
Graphmodel8   &$    36$&$       80$&$     7.43\times 10^{06}$&$  7.80\times 10^{11}$&$ 3.85\times 10^{03}$&$      3.38\times 10^{02}$&$   4.94\times 10^{-09}$\\       
\hline
Eco20         &$    20$&$      209$&$     1.11\times 10^{07}$&$  9.61\times 10^{11}$&$ 1.93\times 10^{03}$&$      5.25\times 10^{02}$&$   2.00\times 10^{-09}$\\       
Eco21         &$    21$&$      230$&$     2.36\times 10^{07}$&$  2.48\times 10^{12}$&$ 4.62\times 10^{03}$&$      1.30\times 10^{03}$&$   1.86\times 10^{-09}$\\       
\hline
Reimer13      &$    13$&$      169$&$     6.99\times 10^{03}$&$  2.10\times 10^{08}$&$ 1.54\times 10^{00}$&$      1.22\times 10^{-01}$&$   7.33\times 10^{-09}$\\       
Reimer14      &$    14$&$      196$&$     1.15\times 10^{04}$&$  4.62\times 10^{08}$&$ 2.18\times 10^{00}$&$      2.51\times 10^{-01}$&$   4.72\times 10^{-09}$\\       
Reimer15      &$    15$&$      225$&$     1.92\times 10^{04}$&$  1.02\times 10^{09}$&$ 3.35\times 10^{00}$&$      2.20\times 10^{-01}$&$   3.29\times 10^{-09}$\\       
\hline
VortexAC4     &$    12$&$       84$&$     1.17\times 10^{04}$&$  1.50\times 10^{08}$&$ 1.55\times 10^{00}$&$      6.69\times 10^{-02}$&$   1.03\times 10^{-08}$\\       
VortexAC5     &$    20$&$      200$&$     3.19\times 10^{06}$&$  2.64\times 10^{11}$&$ 4.84\times 10^{02}$&$      5.09\times 10^{01}$&$   1.83\times 10^{-09}$\\       
\hline
\hline
\end{tabular}}}
\caption{Predicted value of $T$ and measured running time for selected examples \revc{(8 cores)}.\label{table:running}}
\end{table}

\begin{figure}
\centerline{\resizebox{\textwidth}{!}{
\begin{tikzpicture}
\begin{loglogaxis}[
	xlabel={Number of cores},
	ylabel={Measured time (s)},
        xmin=6, xmax=70,
        log basis x=2, 
        ymin=500,ymax=10000,
        grid=major,
        legend style={cells={anchor=west}, rounded corners=2pt, at={(1.4,1.0)}}
        ]
\addplot coordinates{(8,4.07E+3) (16,2.06E+3) (32,1.15E+3) (64,5.9E+2)};
\addplot coordinates{(8,6.46E+3) (16,3.25E+3) (32,1.81E+3) (64,9E+2)};
\addplot coordinates{(8,7.58E+3) (16,3.81E+3) (32,2.14E+3) (64,1.08E+3)};
\addplot coordinates{(16,8.25E+3) (32,4.57E+3) (64,2.27E+3)};
\legend{Cyclic 15, Noon20, Chandra21, Gaukwa 9}
\end{loglogaxis}
\end{tikzpicture}}}
\caption{Measured running time (using 8 cores) against
the invariant $T$ from equation \eqref{main-time1} for several benchmark examples.
Data from table~\ref{table:running} page~\pageref{table:running}.
\label{plot:speedup}}
\end{figure}

\begin{table}
\centerline{\resizebox{\textwidth}{!}{
\begin{tabular}{lrrrrr}
\hline \hline
Number of cores: & $8$ & $16$ & $32$ & $64$ & Efficiency exp.\\ 
\hline
Cyclic15    & $    4.07\times 10^{03}$ & $    2.06\times 10^{03}$ & $    1.15\times 10^{03}$ & $    5.90\times 10^{02}$ & $    9.20\times 10^{-01}$\\        
Noon20      & $    6.46\times 10^{03}$ & $    3.25\times 10^{03}$ & $    1.81\times 10^{03}$ & $    9.00\times 10^{02}$ & $    9.37\times 10^{-01}$\\        
Chandra21   & $    7.58\times 10^{03}$ & $    3.81\times 10^{03}$ & $    2.14\times 10^{03}$ & $    1.08\times 10^{03}$ & $    9.28\times 10^{-01}$\\        
Gaukwa9     &  Unavail. & $    8.25\times 10^{03}$ & $    4.57\times 10^{03}$ & $    2.27\times 10^{03}$ & $    9.30\times 10^{-01}$\\
\hline
\hline
\end{tabular}}}
\caption{Parallel timing. The efficiency exponent is the 
quadratic best fit for the linear coefficient of 
$(\log(N_i),-\log (T_i))$ where $T_i$ is the measured running
time with $N_i$ cores.\label{table:scaling}}
\end{table}

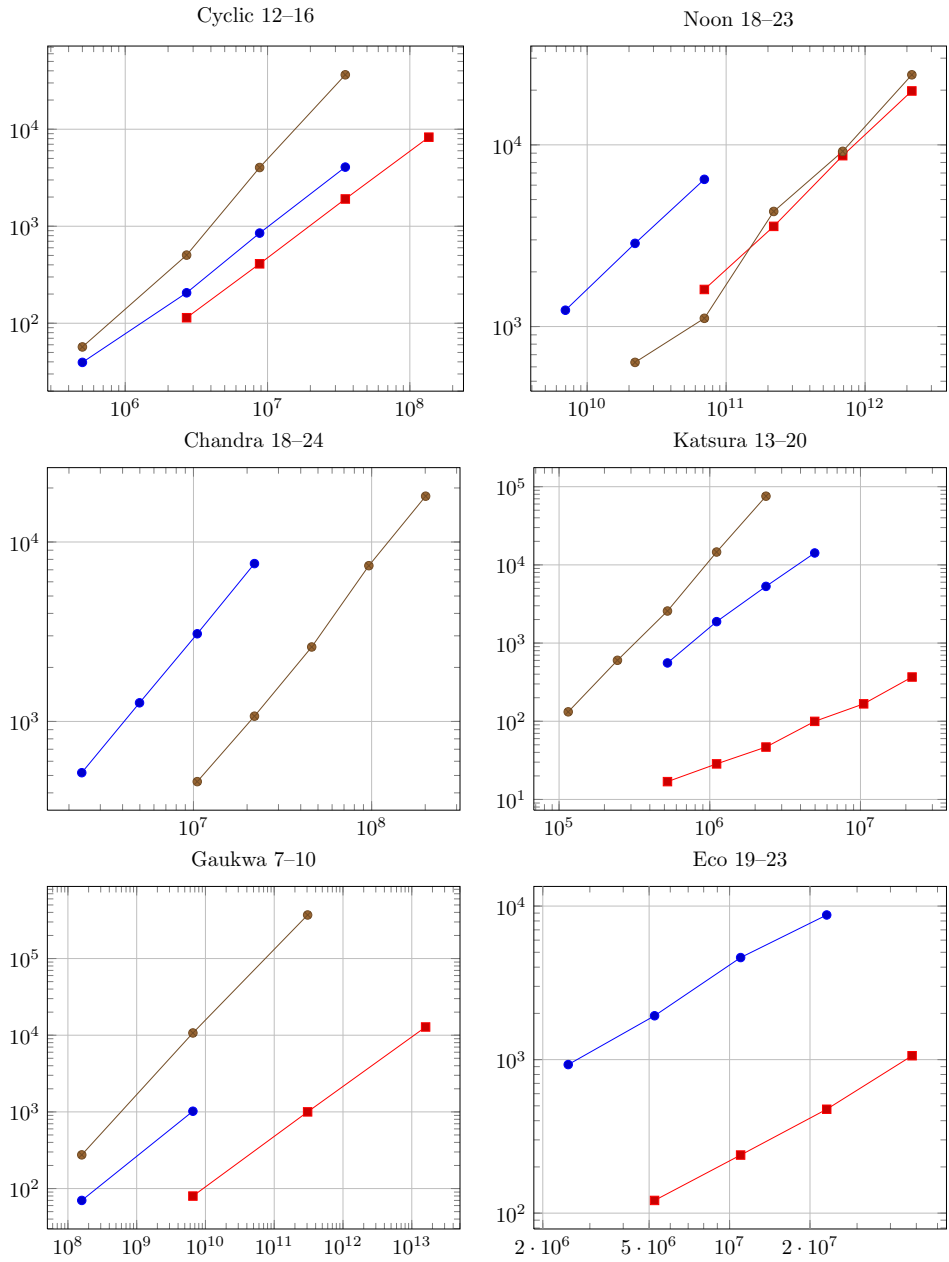
\begin{figure}
\centerline{\resizebox{\textwidth}{!}{
\begin{tikzpicture}
\begin{loglogaxis}[
	title={Cyclic 12--\revb{16}},
    grid=major,
    ]
\addplot coordinates{(    5.00E+05,    3.95E+01) (    2.70E+06,    2.06E+02) (    8.80E+06,    8.50E+02) (    3.52E+07,    4.07E+03) };
\addplot coordinates{(    2.70E+06,    1.14E+02)  (    8.80E+06,    4.10E+02)   (    3.52E+07,    1.91E+03)  (1.36E+08,8.28E+03)};
\addplot coordinates{  (    5.00E+05,    5.70E+01) (    2.70E+06,    5.04E+02) (    8.80E+06,    4.03E+03) (    3.52E+07,    3.64E+04)};
\end{loglogaxis}
\end{tikzpicture}\hskip 10pt
\begin{tikzpicture}
\begin{loglogaxis}[
	title={Noon 18--23},
    grid=major,
    ]
\addplot coordinates{(    6.97E+09,    1.23E+03) (    2.21E+10,    2.87E+03) (    6.97E+10,    6.46E+03)  };
\addplot coordinates{(6.97E+10, 1.60E+03) (    2.20E+11,    3.56E+03)  (    6.90E+11,    8.72E+03)  (	2.17E+12	,	1.98E+04	) };
\addplot coordinates{(    2.21E+10,    6.35E+02)(    6.97E+10,    1.11E+03)(    2.20E+11,    4.30E+03)(    6.90E+11,    9.21E+03) (    2.17E+12,    2.43E+04) };
\end{loglogaxis}
\end{tikzpicture}}}
\centerline{\resizebox{\textwidth}{!}{
\begin{tikzpicture}
\begin{loglogaxis}[
	title={Chandra 18--24},
    grid=major,
    ]
\addplot coordinates{(    2.36E+06,    5.18E+02) (    4.98E+06,    1.27E+03) (    1.05E+07,    3.08E+03) (    2.20E+07,    7.58E+03) };
\addplot coordinates{};
\addplot coordinates{(    1.05E+07,    4.62E+02) (    2.20E+07,    1.07E+03)(    4.61E+07,    2.60E+03)(    9.65E+07,    7.38E+03)(    2.01E+08,    1.80E+04)};
\end{loglogaxis}
\end{tikzpicture}
\hskip 10pt
\begin{tikzpicture}
\begin{loglogaxis}[
	title={Katsura 13--\revb{20}},
    grid=major,
    ]
\addplot coordinates{(    5.24E+05,    5.57E+02) (    1.11E+06,    1.88E+03) (    2.36E+06,    5.31E+03) (    4.97E+06,    1.42E+04) };
\addplot coordinates{ (5.24E+05,1.69E+01) (1.11E+06, 2.85E+01) (    2.36E+06,    4.68E+01) (    4.97E+06,    9.97E+01)  (    1.05E+07,    1.67E+02) (2.20E+07,3.68E+02) };
\addplot coordinates{(    1.15E+05,    1.32E+02)(    2.44E+05,    6.02E+02)
(    5.24E+05,    2.57E+03)(    1.11E+06,    1.46E+04)(    2.36E+06,    7.56E+04)};
\end{loglogaxis}
\end{tikzpicture}}}
\centerline{\resizebox{\textwidth}{!}{
\begin{tikzpicture}
\begin{loglogaxis}[
	title={Gaukwa 7--\revb{10}},
    grid=major,
    ]
\addplot coordinates{(    1.59E+08,    7.00E+01) (    6.57E+09,    1.02E+03)           };
\label{plot:amc}
\addplot coordinates{ (    6.57E+09,    7.98E+01) (    3.06E+11,    1.00E+03) (1.59E+13, 1.28E+04) };
\label{plot:rp}
\addplot coordinates{ (    1.59E+08,    2.75E+02)(    6.57E+09,    1.07E+04) (    3.06E+11,    3.70E+05)};
\label{plot:ll}
\end{loglogaxis}
\end{tikzpicture}
\hskip 10pt
\begin{tikzpicture}
\begin{loglogaxis}[
	title={Eco 19--\revb{23}},
    xtickten={6,7},
    extra x ticks={2E+6, 5E+6, 2E+7},
    grid=major,
    ]
\addplot coordinates{(    2.49E+06,    9.28E+02) (    5.24E+06,    1.93E+03) (    1.10E+07,    4.62E+03) (    2.31E+07,    8.75E+03) };
\addplot coordinates{ (    5.24E+06,    1.21E+02) (    1.10E+07,   2.39E+02)  (    2.31E+07,    4.75E+02) (4.82E+07, 1.06E+03) };
\addplot coordinates{};
\end{loglogaxis}
\end{tikzpicture}}}
\caption{Measured running time of {\sc AllMixedCells} (\ref{plot:amc}), {\sc AllMixedCells} with random path (\ref{plot:rp}) and running time published by
\ocite{Lee-Li} (\ref{plot:ll}) against the output size for selected examples.
All times in seconds.
Data from table~\ref{table:sensitivity} page~\pageref{table:sensitivity}.
\label{plot:sensitivity}}
\end{figure}

\begin{table}
\centerline{\resizebox{\textwidth}{!}{
\begin{tabular}{||l|rrrl|lll||}
\hline \hline
Example    &  $n$ &   Mixed volume &   det  &   Output size& AllMixedCells&Random path&Lee and Li \\         
& & & & & \revc{(8 cores)} & \revc{(8 cores)} & \revc{(1 core)} \\
\hline 
Cyclic12	&$	12	$&$	500352	$&$	12	$&$	5.00\times 10^{05	}$&$	3.95\times 10^{01	}$&$1.91 \times 10^{01		}$&$	5.70\times 10^{01	}$\\
Cyclic13	&$	13	$&$	2704156	$&$	13	$&$	2.70\times 10^{06	}$&$	2.06\times 10^{02	}$&$	1.14\times 10^{02	}$&$	5.04\times 10^{02	}$\\
Cyclic14	&$	14	$&$	8795976	$&$	14	$&$	8.80\times 10^{06	}$&$	8.50\times 10^{02	}$&$	4.10\times 10^{02	}$&$	4.03\times 10^{03	}$\\
Cyclic15	&$	15	$&$	35243520	$&$	15	$&$	3.52\times 10^{07	}$&$	4.07\times 10^{03	}$&$	1.91\times 10^{03	}$&$	3.64\times 10^{04	}$\\
Cyclic16	&$	16	$&$	135555072	$&$	16	$&$	1.36\times 10^{08	}$&$	$&$	8.28\times 10^{03	}$&$	$\\
\hline 
Noon18	&$	18	$&$	387420453	$&$	1	$&$	6.97\times 10^{09	}$&$	1.23\times 10^{03	}$&$		$&$		$\\
Noon19	&$	19	$&$	1162261429	$&$	1	$&$	2.21\times 10^{10	}$&$	2.87\times 10^{03	}$&$		$&$	6.35\times 10^{02	}$\\
Noon20	&$	20	$&$	3486784361	$&$	1	$&$	6.97\times 10^{10	}$&$	6.46\times 10^{03	}$&$		$&$	1.11\times 10^{03	}$\\
Noon21	&$	21	$&$	10460353161	$&$	1	$&$	2.20\times 10^{11	}$&$		$&$	3.56\times 10^{03	}$&$	4.30\times 10^{03	}$\\
Noon22	&$	22	$&$	31381059565	$&$	1	$&$	6.90\times 10^{11	}$&$		$&$	8.72\times 10^{03	}$&$	9.21\times 10^{03	}$\\
Noon23	&$	23	$&$	94143178781	$&$	1	$&$	2.17\times 10^{12	}$&$		$&$1.98\times{10^4}		$&$	2.43\times 10^{04	}$\\
\hline 
Chandra18	&$	18	$&$	131072	$&$	1	$&$	2.36\times 10^{06	}$&$	5.18\times 10^{02	}$&$		$&$		$\\
Chandra19	&$	19	$&$	262144	$&$	1	$&$	4.98\times 10^{06	}$&$	1.27\times 10^{03	}$&$		$&$		$\\
Chandra20	&$	20	$&$	524288	$&$	1	$&$	1.05\times 10^{07	}$&$	3.08\times 10^{03	}$&$		$&$	4.62\times 10^{02	}$\\
Chandra21	&$	21	$&$	1048576	$&$	1	$&$	2.20\times 10^{07	}$&$	7.58\times 10^{03	}$&$		$&$	1.07\times 10^{03	}$\\
Chandra22	&$	22	$&$	2097152	$&$	1	$&$	4.61\times 10^{07	}$&$		$&$		$&$	2.60\times 10^{03	}$\\
Chandra23	&$	23	$&$	4194304	$&$	1	$&$	9.65\times 10^{07	}$&$		$&$		$&$	7.38\times 10^{03	}$\\
Chandra24	&$	24	$&$	8388608	$&$	1	$&$	2.01\times 10^{08	}$&$		$&$		$&$	1.80\times 10^{04	}$\\
\hline 
Katsura13	&$	14	$&$	8190	$&$	1	$&$	1.15\times 10^{05	}$&$		$&$		$&$	1.32\times 10^{02}$\\
Kastura14	&$	15	$&$	16254	$&$	1	$&$	2.44\times 10^{05	}$&$		$&$		$&$	6.02\times 10^{02}$\\
Katsura15	&$	16	$&$	32730	$&$	1	$&$	5.24\times 10^{05	}$&$	5.57\times 10^{02	}$&$	1.69\times 10^{01}	$&$	2.57\times 10^{03	}$\\
Katsura16	&$	17	$&$	65280	$&$	1	$&$	1.11\times 10^{06	}$&$	1.88\times 10^{03	}$&$	2.85\times 10^{01}	$&$	1.46\times 10^{04	}$\\
Katsura17	&$	18	$&$	131070	$&$	1	$&$	2.36\times 10^{06	}$&$	5.31\times 10^{03	}$&$	4.68\times 10^{01}$&$	7.56\times 10^{04	}$\\
Katsura18	&$	19	$&$	261576	$&$	1	$&$	4.97\times 10^{06	}$&$	1.42\times 10^{04	}$&$	9.97\times 10^{01}$&$		$\\
Katsura19	&$	20	$&$	524286	$&$	1	$&$	1.05\times 10^{07	}$&$		$&$	1.67\times 10^{02	}$&$		$\\
Kastura20	&$	21	$&$	1047540	$&$	1	$&$	2.20\times 10^{07	}$&$		$&$	3.68\times 10^{02	}$&$		$\\
\hline 
Gaukwa7	&$	14	$&$	11390625	$&$	1	$&$	1.59\times 10^{08	}$&$	7.00\times 10^{01	}$&$	9.14\times 10^{00	}$&$	2.75\times 10^{02	}$\\
Gaukwa8	&$	16	$&$	410338673	$&$	1	$&$	6.57\times 10^{09	}$&$	1.02\times 10^{03	}$&$	7.98\times 10^{01	}$&$	1.07\times 10^{04	}$\\
Gaukwa9	&$	18	$&$	16983563041	$&$	1	$&$	3.06\times 10^{11	}$&$		$&$	1.00\times 10^{03	}$&$	3.70\times 10^{05	}$\\
\revb{Gaukwa10}	&$	20	$&$	794280046581	$&$	1	$&$	1.59 \times10^{13	}$&$		$&$	1.28\times 10^{04}$&$		$\\
\hline
Eco19	&$	19	$&$	131072	$&$	1	$&$	2.49\times 10^{06	}$&$	9.28\times 10^{02	}$&$		$&$		$\\
Eco20	&$	20	$&$	262144	$&$	1	$&$	5.24\times 10^{06	}$&$	1.93\times 10^{03	}$&$	1.21\times 10^{02	}$&$		$\\
Eco21	&$	21	$&$	524288	$&$	1	$&$	1.10\times 10^{07	}$&$	4.62\times 10^{03	}$&$	2.39\times 10^{02	}$&$		$\\
Eco22	&$	22	$&$	1048576	$&$	1	$&$	2.31\times 10^{07	}$&$	8.75\times 10^{03	}$&$	4.75\times 10^{02	}$&$		$\\
Eco23	&$	23	$&$	2097152	$&$	1	$&$	4.82\times 10^{07	}$&$	$&$	1.06\times 10^{03	}$&$		$\\
\hline 
\hline
\end{tabular}}}
\caption{Measured output-sensitivity of the full {\sc AllMixedCells} algorithm, of {\sc AllMixedCells} with the random
path heuristic and results reported by \ocite{Lee-Li}.\label{table:sensitivity}}
\end{table}

\subsection{Choice of the examples.}

I selected a few families of benchmark systems for which a general
formula is available or is easy to figure. In particular, the benchmark
families \rev{tested} by \ocite{Lee-Li} were all included in the benchmark:
{\em Cyclic}, {\em Noon}, {\em Chandra}, 
{\em Katsura}, {\em Gaukwa}, {\em Vortex}, {\em N-body},
{\em Gridanti} and {\em Sonic}. \rev{Precise references for most of them
were given by \ocite{Verschelde-795}.}

The system {\em VortexAC} from ~\cite{Chen-Lee-Li} was also included for
the sake of comparison. Due to hardware limitations, I was unable to
compute the mixed volume of VortexAC-6.

Jan Verschelde maintains a list of polynomial systems in 
\url{http://homepages.math.uic.edu/~jan/demo.html}. From his list
I selected the {\em Eco} and the {\em Reimer} families. 

According to \ocite{Morgan}*{p.148(7.3)}, the system  
{\em Eco-$n$} arised from economic modeling.
It is defined by 
\begin{align*}
-c_i + x_i x_n + \sum_{j=1}^{n-i-1} x_jx_{i+j}x_n &= 0 &  &\text{for }i=1, \dots, n-1,
\\
x_1 + \cdots + x_{n-1} + 1 &= 0 & &
\end{align*}

The Reimer-$n$ family was defined in the Posso suite, still available
at {http://www-sop.inria.fr/saga/POL/}. It is given by:
\begin{align*}
-1/2 + \sum_{j=1}^{n}(-1)^{j+1}x_{j}^{i+1} &= 0 &&\text{for } 1 \le i \le n.
\end{align*}

The family {\em Graphmodel} comes from Gaussian graphical models in
statistics \cite{Uhler}.
Let $G$ be the cyclic graph with vertices
$\{1, \dots, n\}$ and edges $\mathcal E =
\{ \{1,1\}, \{2,2\} \dots, \{n,n\} 
\cup \{ \{1,2\}, \{2,3\}, \cdots, \{n,1\} \}$. Let
$X$ and $Y$ be $n \times n$ symmetric matrices, constructed
as follows. Assume that $i \le j$:
If $\{i,j\} \in \mathcal E$, then $X_{ij}$ is a variable and
$Y_{ij}$ is a random complex number. If $(i,j) \not \in \mathcal E$,
then $Y_{ij}$ is a variable and $X_{ij}=0$.

The system {\em Graphmodel-n} is given by the \rev{upper triangular part 
of the} matrix equation
$X Y = I$. The actual number of roots of \rev{the overdetermined system
$X Y = I$} is known as
the {\em Maximum Likelihood degree} of the graphical model $G$. 
The same construction may be carried out for any graph $G$.

\subsection{Hardware}
Computations were performed at NACAD ({\em Núcleo Avançado de 
Computação de Alto Desempenho}) in the {\em Universidade Federal do Rio de Janeiro}. 
The machine used was a SGI Altix ICE 8400 running Intel MKL (includes Lapack) and MVAPICH2 (MPI implementation). I used up to 8 nodes, with 8 cores per node. \revc{The CPUs are either Six Core Intel Xeon X5650 (Westmere) running at 2.67 GHz
or Quad Core Intel Xeon X5355 (Clovertown) running at 2.66 GHz.}

\subsection{Claim 1: adequacy of the computation model}
Each of the benchmark examples was tested for 10 pseudo-random liftings. 
The results are displayed in table~\ref{table:running}. 
The number of visited \revc{faces} is an average.
The column $T$ displays the bound in \eqref{main-time1}. 
The running time is the average wall time for all steps of the program, 
from reading the input to writing the mixed cells to the output file.
For systems large enough, the running time was found to be of the
order of $10^{-9} T$ seconds, with a standard deviation under 15\%.

\subsection{Claim 2: scaling} Four benchmark examples with $T>10^{12}$
were selected for the scaling test (Table~\ref{table:scaling}). 

In order to evaluate the efficiency of parallelization, I used least squares
to compute the linear coefficient of the best affine approximation for data
$(\log N_i, -\log T_{i})$ where $N_i$ is the number of cores and $T_i$
the average running time. I obtained a running time of
\[
O(N^{0.93})
\]
while perfect, linear parallelization would yield $O(N)$. 

\subsection{Claim 3: comparison with other available software}

The best published timings for finding mixed cells are those
in~\cite{Lee-Li} and ~\cite{Chen-Lee-Li}. Since experiments were
performed in different machines, the absolute timings may not be
comparable. However, the time ratio from a benchmark example to the
next example in the same family is an invariant.

To make sense from this invariant, I plotted the running time against
the output size, in a log-log scale (Fig.~\ref{plot:sensitivity}).

The slopes of the lines show how the running time increases with respect to the output size. A slope close
to one or smaller implies that computing the mixed cells will not
be a bottleneck for the overall polynomial solving by homotopy.

Fig. ~\ref{plot:sensitivity} shows that there is not a best algorithm
for all cases and that all the three tested algorithms are competitive for
some of the benchmark families. It is possible to see an asymptotic
gain in running time for the families {\em Cyclic}, {\em Katsura}
and {\em Gaukwa} when using {\sc AllMixedCellsFull}. 

Comparison with the results by \cite{Chen-Lee-Li} can only be done
in terms of absolute running time, adjusting their results in the
shared memory model to 8 cores. Their program had a 
similar running time for the {\em Cyclic-15} and {\em Eco-20} 
examples and was faster than {\sc AllMixedCellsFull} 
for {\em Sonic-8} and {\em Katsura-15}.
However the number of cores using shared memory is limited so those
results are not necessarily scalable.

\section{Conclusions}\label{sec:conclusions}
We introduced a new algorithm to compute mixed cells and mixed volumes. Its running time was bounded in terms of
quermassintegrals associated to the supporting polytopes of the equations. This
is the first non-combinatorial bound for mixed volume computation.
\medskip
\par
The implementation of the algorithm is competitive with available
software. Its main drawbacks are memory usage and some numerical stability
problems for very large polynomial systems.
\par
Memory usage problems disappear when using a sufficient number of
processors, since most memory storage is
local and distributed.
\par
Numerical instability arises when two \revc{faces} are nearly parallel, or when
the matrix of active constraints is nearly degenerate.
This problem was
solved through rigorous error bounds and judicious use of 
quadruple precision arithmetic. 
\revc{
Extra precision may be required if the number of faces to visit
becomes substantially larger than in the tested examples (table~\ref{table:running}).
At this time precision is not an issue, so this is left for future implementations.
}
\par
The random walk method for accelerating the algorithm is a promising
strategy. It should be coupled with a fast mixed volume estimator
(unavailable at this time) to ensure correctness of the results. Moreover,
 random graph search algorithms are a research subject by
itself.
\par
\rev{While the motivation of this paper was to provide good starting systems for
homotopy, the numerical implementation of polyhedral homotopy continuation may require
adequate mathematical machinery beyond projective spaces and unitary group 
action. See for instance \cite{Malajovich-Rojas,
Malajovich-Fewspaces} on conditioning and root counting on toric varieties.
Homotopy algorithms on toric varieties will be 
the subject of a future paper.}

\revd{
\begin{note*} While this paper was under review, 
\ocite{Jensen} proposed a symbolic algorithm for tropical
homotopy continuation, using similar but subtly different
ideas. Preliminary experiments suggest 
a running time comparable to the
random path method (table~\ref{table:sensitivity}), yet it
is deterministic.
\end{note*}
}

\section*{Glossary of notations}
\centerline{\resizebox{\textwidth}{!}{
\begin{theglossary}
\input{mixed.gls}
\end{theglossary}
}}
\medskip
\begin{bibsection}
\renewcommand{\MR}[1]{}
\begin{biblist}

\bib{Barvinok}{article}{
   author={Barvinok, A.},
   title={Computing mixed discriminants, mixed volumes, and permanents},
   journal={Discrete Comput. Geom.},
   volume={18},
   date={1997},
   number={2},
   pages={205--237},
   issn={0179-5376},
   review={\MR{1455515 (98m:52011)}},
   doi={10.1007/PL00009316},
}
\smallskip
	
\bib{Bernstein}{article}{
   author={Bernstein, D. N.},
   title={The number of roots of a system of equations},
   language={Russian},
   journal={Funkcional. Anal. i Prilo\v zen.},
   volume={9},
   date={1975},
   number={3},
   pages={1--4},
   issn={0374-1990},
   review={\MR{0435072 (55 \#8034)}},
}
\smallskip

\bib{BKK}{article}{
   author={Bernstein, D. N.},
   author={Ku{\v{s}}nirenko, A. G.},
   author={Hovanski{\u\i}, A. G.},
   title={Newton polyhedra},
   language={Russian},
   journal={Uspehi Mat. Nauk},
   volume={31},
   date={1976},
   number={3(189)},
   pages={201--202},
   issn={0042-1316},
   review={\MR{0492376 (58 \#11500)}},
}
\smallskip

\bib{Chen-Lee-Li}{article}{
   author={Chen, Tianran},
   author={Lee, Tsung-Lin},
   author={Li, Tien-Yien},
   title={Mixed volume computation in parallel},
   journal={Taiwanese J. Math.},
   volume={18},
   date={2014},
   number={1},
   pages={93--114},
   issn={1027-5487},
   review={\MR{3162115}},
   doi={10.11650/tjm.18.2014.3276},
}
\smallskip
	
\bib{Cartwright-Payne}{article}{
   author={Cartwright, Dustin},
   author={Payne, Sam},
   title={Connectivity of tropicalizations},
   journal={Math. Res. Lett.},
   volume={19},
   date={2012},
   number={5},
   pages={1089--1095},
   issn={1073-2780},
   review={\MR{3039832}},
   doi={10.4310/MRL.2012.v19.n5.a10},
   }
\smallskip
	
\bib{Demmel}{book}{
   author={Demmel, James W.},
   title={Applied numerical linear algebra},
   publisher={Society for Industrial and Applied Mathematics (SIAM),
   Philadelphia, PA},
   date={1997},
   pages={xii+419},
   isbn={0-89871-389-7},
   review={\MR{1463942 (98m:65001)}},
   doi={10.1137/1.9781611971446},
}
\smallskip
\bib{Dyer-Gritzmann-Hufnagel}{article}{
   author={Dyer, Martin},
   author={Gritzmann, Peter},
   author={Hufnagel, Alexander},
   title={On the complexity of computing mixed volumes},
   journal={SIAM J. Comput.},
   volume={27},
   date={1998},
   number={2},
   pages={356--400},
   issn={0097-5397},
   review={\MR{1616544 (99f:68092)}},
   doi={10.1137/S0097539794278384},
}
\smallskip

\bib{Emiris}{article}{
   author={Emiris, Ioannis Z.},
   title={On the complexity of sparse elimination},
   journal={J. Complexity},
   volume={12},
   date={1996},
   number={2},
   pages={134--166},
   issn={0885-064X},
   review={\MR{1398322 (97g:68106)}},
   doi={10.1006/jcom.1996.0010},
}
\smallskip
	
\bib{Emiris-Canny}{article}{
   author={Emiris, Ioannis Z.},
   author={Canny, John F.},
   title={Efficient incremental algorithms for the sparse resultant and the
   mixed volume},
   journal={J. Symbolic Comput.},
   volume={20},
   date={1995},
   number={2},
   pages={117--149},
   issn={0747-7171},
   review={\MR{1374227 (96j:68098)}},
   doi={10.1006/jsco.1995.1041},
}
\smallskip

\bib{Emiris-Fisikopoulos}{article}{
AUTHOR = {Emiris, Ioannis Z.},
AUTHOR = {Fisikopoulos, Vissarion},
title     = {Efficient Random-Walk Methods for Approximating Polytope Volume},
booktitle = {30th Annual Symposium on Computational Geometry, SOCG'14, Kyoto, Japan, June 08 - 11, 2014},
pages     = {318},
year      = {2014},
url       = {http://doi.acm.org/10.1145/2582112.2582133}}
\smallskip

\bib{Emiris-Vidunas}{article}{
AUTHOR = {Emiris, Ioannis Z.},
AUTHOR = {Vidunas, Raimundas},
TITLE = {Root counts of semi-mixed systems, and an application to counting Nash equilibria},
booktitle = {ISSAC'14},
PAGES = {154-161},
YEAR = {2014}, 
}
\smallskip
	
\bib{Gao-Li}{article}{
   author={Gao, Tangan},
   author={Li, T. Y.},
   title={Mixed volume computation via linear programming},
   journal={Taiwanese J. Math.},
   volume={4},
   date={2000},
   number={4},
   pages={599--619},
   issn={1027-5487},
   review={\MR{1799755 (2001j:65026)}},
}
\smallskip
	
\bib{Gao-Li-Mengnien}{article}{
   author={Gao, Tangan},
   author={Li, T. Y.},
   author={Wu, Mengnien},
   title={Algorithm 846: MixedVol: a software package for mixed-volume
   computation},
   journal={ACM Trans. Math. Software},
   volume={31},
   date={2005},
   number={4},
   pages={555--560},
   issn={0098-3500},
   review={\MR{2272345 (2007j:65059)}},
   doi={10.1145/1114268.1114274},
}
\smallskip

\bib{Gurvits}{article}{
   author={Gurvits, Leonid},
   title={A polynomial-time algorithm to approximate the mixed volume within
   a simply exponential factor},
   journal={Discrete Comput. Geom.},
   volume={41},
   date={2009},
   number={4},
   pages={533--555},
   issn={0179-5376},
   review={\MR{2496316 (2010e:52015)}},
   doi={10.1007/s00454-009-9147-5},
} 
\smallskip
	
\bib{Huber-Sturmfels}{article}{
   author={Huber, Birkett},
   author={Sturmfels, Bernd},
   title={A polyhedral method for solving sparse polynomial systems},
   journal={Math. Comp.},
   volume={64},
   date={1995},
   number={212},
   pages={1541--1555},
   issn={0025-5718},
   review={\MR{1297471 (95m:65100)}},
   doi={10.2307/2153370},
}
\smallskip
\bib{Jensen}{article}{
   author={Jensen, Anders},
   title={Tropical Homotopy Continuation},
   date={2016},
   eprint={http://arxiv.org/abs/1601.02818},
   }	
\smallskip

\bib{Khachiyan}{article}{
   author={Khachiyan, L. G.},
   title={The problem of calculating the volume of a polyhedron is
   enumeratively hard},
   language={Russian},
   journal={Uspekhi Mat. Nauk},
   volume={44},
   date={1989},
   number={3(267)},
   pages={179--180},
   issn={0042-1316},
   translation={
      journal={Russian Math. Surveys},
      volume={44},
      date={1989},
      number={3},
      pages={199--200},
      issn={0036-0279},
   },
   review={\MR{1024062 (91e:68073)}},
   doi={10.1070/RM1989v044n03ABEH002136},
}
\smallskip
	
\bib{KnuthV3}{book}{
   author={Knuth, Donald E.},
   title={The art of computer programming. Vol. 3},
   publisher={Addison-Wesley, Reading, MA},
   date={1998},
   pages={xiv+780},
   isbn={0-201-89685-0},
   review={\MR{3077154}},
}
\smallskip
\bib{LeGall}{article}{
author={Le Gall,François},
title={Powers of Tensors and Fast Matrix Multiplication},
booktitle={Proceedings of the 39th International Symposium on Symbolic and Algebraic Computation (ISSAC 2014)}, 
pages={296--303},
year={2014},
}	
	
\bib{Lee-Li}{article}{
   author={Lee, Tsung-Lin},
   author={Li, Tien-Yien},
   title={Mixed volume computation in solving polynomial systems},
   conference={
      title={Randomization, relaxation, and complexity in polynomial
      equation solving},
   },
   book={
      series={Contemp. Math.},
      volume={556},
      publisher={Amer. Math. Soc.},
      place={Providence, RI},
   },
   date={2011},
   pages={97--112},
   review={\MR{2882664 (2012k:52019)}},
   doi={10.1090/conm/556/11009},
}
\smallskip

\bib{Li-Li}{article}{
   author={Li, T. Y.},
   author={Li, Xing},
   title={Finding mixed cells in the mixed volume computation},
   journal={Found. Comput. Math.},
   volume={1},
   date={2001},
   number={2},
   pages={161--181},
   issn={1615-3375},
   review={\MR{1830034 (2002b:65030)}},
   doi={10.1007/s102080010005},
}
\smallskip

\bib{Maclagan-Sturmfels}{book}{
   author={Maclagan, Diane},
   author={Sturmfels, Bernd},
   title={Introduction to Tropical Geometry},
   publisher={American Mathematical Society, Providence, RI},
   year = {2015},
   pages={vii+359}
   }
\smallskip
\bib{Malajovich-Fewspaces}{article}{
   author={Malajovich, Gregorio},
   title={On the Expected Number of Zeros of Nonlinear Equations},
   journal={Found. Comput. Math.},
   volume={13},
   date={2013},
   number={6},
   pages={867--884},
   issn={1615-3375},
   review={\MR{3124943}},
   doi={10.1007/s10208-013-9171-y},
}
\smallskip
\bib{Malajovich-Rojas}{article}{
   author={Malajovich, Gregorio},
   author={Rojas, J. Maurice},
   title={High probability analysis of the condition number of sparse
   polynomial systems},
   journal={Theoret. Comput. Sci.},
   volume={315},
   date={2004},
   number={2-3},
   pages={524--555},
   issn={0304-3975},
   doi={10.1016/j.tcs.2004.01.006}
}
	
\smallskip
\bib{Minkowski}{article}{
   author={Minkowski, Hermann},
   title={Sur les surfaces convexes fermées},
   journal={C.R. Acad.Sci., Paris},
   volume={132},
   pages={21--24},
   year={1901}
   }
\smallskip
\bib{Mizutani-DEMICs}{article}{
   author={Mizutani, Tomohiko},
   author={Takeda, Akiko},
   title={DEMiCs: a software package for computing the mixed volume via
   dynamic enumeration of all mixed cells},
   conference={
      title={Software for algebraic geometry},
   },
   book={
      series={IMA Vol. Math. Appl.},
      volume={148},
      publisher={Springer, New York},
   },
   date={2008},
   pages={59--79},
   doi={10.1007/978-0-387-78133-4},
}
\smallskip	
\bib{Mizutani-Takeda-Kojima}{article}{
   author={Mizutani, Tomohiko},
   author={Takeda, Akiko},
   author={Kojima, Masakazu},
   title={Dynamic enumeration of all mixed cells},
   journal={Discrete Comput. Geom.},
   volume={37},
   date={2007},
   number={3},
   pages={351--367},
   issn={0179-5376},
   review={\MR{2301523 (2008b:52012)}},
   doi={10.1007/s00454-006-1300-9},
}
\smallskip

\bib{Morgan}{book}{
   author={Morgan, Alexander},
   title={Solving polynomial systems using continuation for engineering and
   scientific problems},
   publisher={Prentice Hall, Inc., Englewood Cliffs, NJ},
   date={1987},
   pages={xiv+546},
   isbn={0-13-822313-0},
}
\smallskip
	
\bib{Preparata-Shamos}{book}{
   author={Preparata, Franco P.},
   author={Shamos, Michael Ian},
   title={Computational geometry},
   series={Texts and Monographs in Computer Science},
   note={An introduction},
   publisher={Springer-Verlag, New York},
   date={1985},
   pages={xii+390},
   isbn={0-387-96131-3},
   doi={10.1007/978-1-4612-1098-6},
}
\smallskip

\bib{Uhler}{article}{
   author={Uhler, Caroline},
   title={Geometry of maximum likelihood estimation in Gaussian graphical
   models},
   journal={Ann. Statist.},
   volume={40},
   date={2012},
   number={1},
   pages={238--261},
   issn={0090-5364},
   review={\MR{3014306}},
   doi={10.1214/11-AOS957},
}
		
\smallskip
\bib{VassilevskaWilliams}{article}{
   author={Vassilevska Williams, Virginia},
   title={Multiplying matrices faster than Coppersmith-Winograd},
   booktitle={Proceedings of the forty-fourth annual ACM symposium on Theory of computing},
   pages={887--898},
   publisher={ACM}, 
   year={2012}}

\smallskip
\bib{Verschelde-795}{article}{
   author={Verschelde, Jan},
   title={Algorithm 795: PHCPack: A general-purpose solver for polynomial systems by homotopy continuation},
   journal={ACM Transactions on Mathematical Software},
   volume={25},
   number={2},
   pages={251-276},
   year={1999}}
\smallskip

\bib{Verschelde-Gatermann-Cools}{article}{
   author={Verschelde, J.},
   author={Gatermann, K.},
   author={Cools, R.},
   title={Mixed-volume computation by dynamic lifting applied to polynomial
   system solving},
   journal={Discrete Comput. Geom.},
   volume={16},
   date={1996},
   number={1},
   pages={69--112},
   issn={0179-5376},
   review={\MR{1397788 (98g:68171)}},
   doi={10.1007/BF02711134},
}

\bib{Verschelde-Verlinden-Cools}{article}{
   author={Verschelde, Jan},
   author={Verlinden, Pierre},
   author={Cools, Ronald},
   title={Homotopies exploiting Newton polytopes for solving sparse
   polynomial systems},
   journal={SIAM J. Numer. Anal.},
   volume={31},
   date={1994},
   number={3},
   pages={915--930},
   issn={0036-1429},
   review={\MR{1275120 (94m:65084)}},
   doi={10.1137/0731049},
}  
\bib{Yu}{article}{
   author={Yu, Josephine},
   title={Do most polynomials generate a prime ideal?},
   date={2015},
   eprint={http://arxiv.org/abs/1509.02050},
   }
\end{biblist}

\end{bibsection}

\end{document}